\title{Disconnected cuts in 4-connected planar graphs}
\author{Brandon Du Preez}
\affil{University of Cape Town}
\affil{brandon.dupreez@uct.ac.za}
\date{13 December 2023}
\tikzset{>=latex}
\renewcommand{\int}{\ensuremath{\textup{Int}}}
\newcommand{\ext}{\ensuremath{\textup{Ext}}}
\newcommand{\adj}{\ensuremath{\textup{Adj}}}
\newcommand{\n}{\ensuremath{\textup{N}}}
\newcommand{\ins}[1]{{#1}^{\bm \circ}}
\newcommand{\DC}{\textsc{Disconnected Cut}}
\newcommand{\MDC}{\textsc{Minimal Disconnected Cut}}
\begin{document}
	\newtheorem{theorem}{Theorem}
	\newtheorem{thm}[theorem]{Theorem}
	\newtheorem{lem}[theorem]{Lemma}
	\newtheorem{cor}[theorem]{Corollary}
	\newtheorem{obs}[theorem]{Observation}
	\newtheorem{prop}[theorem]{Proposition}
	\newtheorem{conj}[theorem]{Conjecture}
	\newtheorem{dfn}[theorem]{Definition}
	\newtheorem{rem}[theorem]{Remark}
	\newtheorem{prob}{Problem}
	
	\maketitle
	
	\begin{abstract}
		\noindent Let $G=(V,E)$ be a connected graph. A subset $S\subset V$ is a cut of $G$ if $G-S$ is disconnected. A near triangulation is a 2-connected plane graph that has at most one face that is not a triangle. In this paper, we explore minimal cuts of 4-connected planar graphs. Our main result is that every minimal cut of a 4-connected planar graph $G$ is connected if and only if $G$ is a near-triangulation. We use this result to sketch a linear-time algorithm for finding a disconnected cut of a 4-connected planar graph.
	\end{abstract}

\section{Introduction}

Let $G=(V,E)$ be a connected graph.
A \textbf{cut} of $G$ is a set $S\subset V$ such that $G-S$ is disconnected.
The cut $S$ is \textbf{minimal} if $G-T$ is connected for each proper subset $T\subset S$.
This is equivalent to the (easier to check) property that $G-(S-v)$ is connected for every $v\in S$.
The cut $S$ is connected (disconnected) if the induced graph $G[S]$ is connected (disconnected), and it is stable if $G[S]$ has no edges.

The decision problem \DC{} asks whether a given connected graph $G=(V,E)$ has a disconnected cut.
This problem is well-studied --- in part, because it is equivalent to a number of other problems related to graph homomorphisms, graph covers, and graph partitions.
Phrased as a graph homorphism problem, \DC{} is equivalent to asking whether there exists a vertex-surjective homomorphism from $G$ onto $\mathcal{C}_4$, where $\mathcal{C}_4$ is the graph formed by adding a loop to each vertex of the cycle $C_4$ \cite{cov_few_bip_subgraphs}.
\DC{} is also equivalent to the covering problem of determining whether the complement graph $\overline{G}$ contains two complete bipartite subgraphs $H_1$ and $H_2$ such that $V(H_1)\cup V(H_2) = V$ \cite{cov_few_bip_subgraphs}.
For details on how \DC{} is equivalent to two $H$-partition problems (the $2K_2$ and $2S_2$ problems), see \cite{disc_cuts_and_seps, param_cut_sets}.

As shown in \cite{complexity_disc_cut}, \DC{} is NP-complete for graphs of diameter 2 (the problem is trivial when the diameter is 1, and it's easily shown that every diameter 3 graph has a disconnected cut).
However, the problem is fixed parameter tractable on graphs of bounded genus --- where the parameter involved is the number of components in the cut \cite{param_cut_sets}.
Further, for many graph classes, \DC{} is known to be tractable.
Examples of such classes include triangle-free graphs, graphs with a dominating edge, and graphs with bounded maximum degree \cite{cov_few_bip_subgraphs}.
In \cite{param_cut_sets}, it is shown that \DC{} can be solved in polynomial time for chordal graphs, and for any minor-closed family of graphs that excludes any apex graph.
As a notable corollary, \DC{} is polynomial time solveable for planar graphs.
By developing a decomposition theorem for claw-free graphs of diameter 2, \cite{disc_cuts_claw_free} shows that \DC{} is polynomial time solveable for claw-free graphs. 
Further, \cite{disc_cuts_claw_free} shows that the problem is also polynomial time solveable for $H$-free graphs, where $H$ is any 4-vertex graph other than $K_4$.
Both \cite{complexity_disc_cut} and \cite{disc_cuts_claw_free} contain extensive lists of related results.

In some cases, the cuts of a graph are not as useful / structured as the \textit{minimal} cuts (the not-necessarily minimal cuts are also far more numerous). 
This is especially true for 3-connected plane graphs, in which minimal cuts correspond to Jordan Curves that separate the graph \cite{listing_min_seps_3conn_pg}.
Thus the focus of this paper, the \MDC{} decision problem, was introduced in \cite{disc_cuts_and_seps}:

\fbox{
\parbox{370pt}{
\MDC{}\\
\textbf{Input:} A connected graph $G$.\\
\textbf{Question:} Does $G$ have a minimal cut that is disconnected?
}}

Note that we are interested in `inclusion-minimal cuts that are disconnected' and \textit{not} `minimal elements of the set of disconnected cuts'.
Like \DC{}, \MDC{} is NP-Complete \cite{disc_cuts_and_seps}.
For graphs of diameter 2, these two problems are equivalent, but \MDC{} remains NP-Complete when the input graph has diameter 3.
As shown in \cite{min_disc_cuts_in_pgs}, \MDC{} is NP-Complete for planar graphs (specifically, those with connectivity 2), but can be solved in $\mathcal{O}(n^3)$ time for 3-connected planar graphs.

In this paper, we show that \MDC{} can be solved in linear time on 4-connected planar graphs. 
Moreover, if a minimal disconnected cut exists, one can be found in linear time. 
We will need a number of results and technical lemmas on minimal cuts in planar graphs --- most of which can be found in \cite{chordal_embeddings_pgs}, \cite{listing_min_seps_3conn_pg} and \cite{min_disc_cuts_in_pgs}.
For other recent work on minimal cuts in graphs and planar graphs, see \cite{struct_min_vertex_cuts} and \cite{listing_min_seps_lintime}.

\section{Overview}
In Section \ref{sec:prelim}, we lay out some literature results that will either be needed for the rest of the paper, or that provide intuition for minimal cuts in highly connected planar graphs.
In Section \ref{sec:nhoods}, we describe the local structure (faces and vertex neighbourhoods) of 3-connected and 4-connected planar graphs.
In section \ref{sec:disc_sep} we prove that a 4-connected planar graph is cleavable if and only if it has at most one face that is not bounded by a triangle.
In section \ref{sec:auxillary}, we investigate properties of the auxiliary graph (a tool we make use of throughout the paper).
We then present some further results on components of minimal cuts of planar graphs in Section \ref{sec:components} --- with a focus on results relevant to the \textsc{Stable Cut} problem (the problem of finding a cut that is an independent set).
In Section \ref{sec:algorithm}, we show that a minimal disconnected cut of a 4-connected planar graph can be found in linear time.

\section{Further definitions}
\label{sec:defins}

Let $G=(V,E)$ be a graph. 
Suppose $P = v_1, \dots, v_k$ is a path in $G$, then $\ins{P} = P - \{v_1, v_k\}$. If $H$ is a subgraph of $G$, then $P$ and $H$ are \textbf{internally disjoint} if $\ins{P}\cap H = \emptyset$.
We say that a set $T\subseteq V$ of vertices is connected if the induced graph $G[T]$ is connected.
If $A$ and $B$ are two disjoint subsets of $V$, then $S$ is an $\bm{A-B}$ \textbf{separator} if $A$ and $B$ are in different components of $G-S$. 
The set $S$ is a \textbf{minimal} $\bm{A-B}$ \textbf{separator} if $S$ is an $A-B$ separator, and for each $T\subset S$, $A$ and $B$ are in the same component of $G-T$.
For two vertices $u$ and $v$, (minimal) $u-v$ separators are defined analogously. 
Every separator is a cut, however a minimal $A-B$ separator need not be a minimal cut.
We say $G$ is \textbf{cleavable} if every minimal cut of $G$ is connected.

Let $G=(V,E)$ be a plane graph in which every face is bounded by a cycle (i.e., $G$ is 2-connected). 
A cycle in $G$ is called a \textbf{large face} if it bounds a face, and has length 4 or greater.  
We say that $G$ is a \textbf{triangulation} / \textbf{maximal plane graph} if every face is bounded by a triangle. 
Further, $G$ is a \textbf{near triangulation} if every face is bounded by a cycle, and at most one of the faces is bounded by a cycle of length greater than 3 (i.e., a near triangulation is a 2-connected plane graph with at most one large face).
We will often refer to a face and its bounding cycle interchangeably. 
Where confusion may arise, denote by $C_f$ the cycle bounding face $f$, and $f_C$ the face bounded by cycle $C$.
Every cycle $C$ of $G$ induces a jordan curve in the plane --- and thus partitions the plane into the interior of $C$, the exterior of $C$, and the curve $C$ itself.

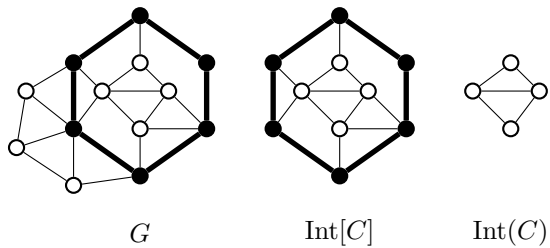
\begin{figure}[h]
	\begin{center}
		\begin{tikzpicture}
	[scale=0.5,inner sep=0.7mm, 
	blackvertex/.style={circle,thick,draw, fill=black},
	whitevertex/.style={circle,thick,draw, fill=white},
	none/.style={},
	red/.style={line width=2pt}] 
	
		\node [style=blackvertex] (0) at (0, 2.75) {};
		\node [style=blackvertex] (1) at (-1.75, 1.5) {};
		\node [style=blackvertex] (2) at (-1.75, -0.25) {};
		\node [style=blackvertex] (3) at (0, -1.5) {};
		\node [style=blackvertex] (4) at (1.75, -0.25) {};
		\node [style=blackvertex] (5) at (1.75, 1.5) {};
		\node [style=whitevertex] (6) at (-1, 0.75) {};
		\node [style=whitevertex] (7) at (0.75, 0.75) {};
		\node [style=whitevertex] (8) at (0, -0.25) {};
		\node [style=whitevertex] (9) at (0, 1.5) {};
		\node [style=whitevertex] (10) at (-1.75, -1.75) {};
		\node [style=whitevertex] (11) at (-3.25, -0.75) {};
		\node [style=whitevertex] (12) at (-3, 0.75) {};
		\node [style=blackvertex] (13) at (5.25, 2.75) {};
		\node [style=blackvertex] (14) at (3.5, 1.5) {};
		\node [style=blackvertex] (15) at (3.5, -0.25) {};
		\node [style=blackvertex] (16) at (5.25, -1.5) {};
		\node [style=blackvertex] (17) at (7, -0.25) {};
		\node [style=blackvertex] (18) at (7, 1.5) {};
		\node [style=whitevertex] (19) at (4.25, 0.75) {};
		\node [style=whitevertex] (20) at (6, 0.75) {};
		\node [style=whitevertex] (21) at (5.25, -0.25) {};
		\node [style=whitevertex] (22) at (5.25, 1.5) {};
		\node [style=whitevertex] (29) at (8.75, 0.75) {};
		\node [style=whitevertex] (30) at (10.5, 0.75) {};
		\node [style=whitevertex] (31) at (9.75, -0.25) {};
		\node [style=whitevertex] (32) at (9.75, 1.5) {};
		\node [style=none] (33) at (0, -3) {$G$};
		\node [style=none] (34) at (5.25, -3) {$\int[C]$};
		\node [style=none] (35) at (9.75, -3) {$\int(C)$};

		\draw [style=red] (0) to (1);
		\draw [style=red] (1) to (2);
		\draw [style=red] (2) to (3);
		\draw [style=red] (3) to (4);
		\draw [style=red] (5) to (4);
		\draw [style=red] (0) to (5);
		\draw (9) to (6);
		\draw (6) to (8);
		\draw (8) to (7);
		\draw (9) to (7);
		\draw (0) to (9);
		\draw (1) to (6);
		\draw (6) to (2);
		\draw (6) to (7);
		\draw (7) to (4);
		\draw (8) to (4);
		\draw (8) to (3);
		\draw (1) to (12);
		\draw (12) to (2);
		\draw (12) to (11);
		\draw (11) to (2);
		\draw (2) to (10);
		\draw (11) to (10);
		\draw (10) to (3);
		\draw [style=red] (13) to (14);
		\draw [style=red] (14) to (15);
		\draw [style=red] (15) to (16);
		\draw [style=red] (16) to (17);
		\draw [style=red] (18) to (17);
		\draw [style=red] (13) to (18);
		\draw (22) to (19);
		\draw (19) to (21);
		\draw (21) to (20);
		\draw (22) to (20);
		\draw (13) to (22);
		\draw (14) to (19);
		\draw (19) to (15);
		\draw (19) to (20);
		\draw (20) to (17);
		\draw (21) to (17);
		\draw (21) to (16);
		\draw (32) to (29);
		\draw (29) to (31);
		\draw (31) to (30);
		\draw (32) to (30);
		\draw (29) to (30);

\end{tikzpicture}
		\caption{Left to right: a Jordan-separating cycle $C$ (bold) in a plane graph $G$; the subgraph $\int[C]$ of $G$; the subgraph $\int(C)$ of $G$.}
	\end{center}
	\label{fig:jordan_separating}
\end{figure}

Denote by $\int(C)$ ($\ext(C)$) the subgraph of all the edges and vertices lying strictly in the interior (exterior) of $C$ --- and $\int[C]$ ($\ext[C]$) the subgraph of all edges and vertices lying in the closed region consisting of $C$ and its interior (exterior).
We say that a cycle $C$ is a \textbf{Jordan separating cycle} if both $\int(C)$ and $\ext(C)$ each contain at least one vertex (see Figure \ref{fig:jordan_separating}).

Suppose $G=(V,E)$ is a plane graph in which every face is bounded by a cycle, and let $\Omega$ be the set of large faces of $G$. 
To form the \textbf{auxiliary graph} $\bm{G^\nabla}$ of $G$, add a vertex $v_W$ in the face $f_W$ for each large face $W\in \Omega$, and make $v_W$ adjacent to every vertex of $W$ (for example, see Figure \ref{fig:near_triangulation}). 
Note that $G^\nabla$ is a triangulation.

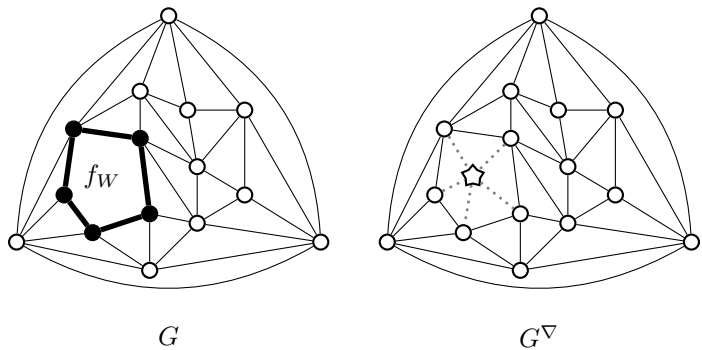
\begin{figure}[h]
	\begin{center}
		\begin{tikzpicture}
	[scale=0.5,inner sep=0.7mm, 
	blackvertex/.style={circle,thick,draw, fill=black},
	whitevertex/.style={circle,thick,draw, fill=white},
	none/.style={},
	red/.style={line width=2pt},
	lightedge/.style={line width=1pt, color=black!50, dotted}]
	
		\node [style=whitevertex] (0) at (0, 5) {};
		\node [style=whitevertex] (1) at (-4, -1) {};
		\node [style=whitevertex] (2) at (4, -1) {};
		\node [style=blackvertex] (3) at (-2.5, 2) {};
		\node [style=blackvertex] (4) at (-2.75, 0.25) {};
		\node [style=blackvertex] (5) at (-2, -0.75) {};
		\node [style=blackvertex] (6) at (-0.5, -0.25) {};
		\node [style=blackvertex] (7) at (-0.75, 1.75) {};
		\node [style=whitevertex] (8) at (-0.75, 3) {};
		\node [style=whitevertex] (9) at (2, 2.5) {};
		\node [style=whitevertex] (10) at (0.75, 1) {};
		\node [style=whitevertex] (11) at (0.75, -0.5) {};
		\node [style=whitevertex] (12) at (0.5, 2.5) {};
		\node [style=whitevertex] (13) at (2, 0.25) {};
		\node [style=whitevertex] (14) at (-0.5, -1.75) {};
		\node [style=none] (15) at (-1.75, 0.75) {$f_W$};
		\node [style=whitevertex] (16) at (9.75, 5) {};
		\node [style=whitevertex] (17) at (5.75, -1) {};
		\node [style=whitevertex] (18) at (13.75, -1) {};
		\node [style=whitevertex] (24) at (9, 3) {};
		\node [style=whitevertex] (25) at (11.75, 2.5) {};
		\node [style=whitevertex] (26) at (10.5, 1) {};
		\node [style=whitevertex] (27) at (10.5, -0.5) {};
		\node [style=whitevertex] (28) at (10.25, 2.5) {};
		\node [style=whitevertex] (29) at (11.75, 0.25) {};
		\node [style=whitevertex] (30) at (9.25, -1.75) {};
		\node [style=whitevertex] (31) at (7.25, 2) {};
		\node [style=whitevertex] (32) at (7, 0.25) {};
		\node [style=whitevertex] (33) at (7.75, -0.75) {};
		\node [style=whitevertex] (34) at (9.25, -0.25) {};
		\node [style=whitevertex] (35) at (9, 1.75) {};
		\node [style=whitevertex, shape=star, star points = 5] (36) at (8, 0.75) {};
		\node [style=none] (38) at (0, -3.5) {$G$};
		\node [style=none] (39) at (9.75, -3.5) {$G^\nabla$};

		\draw [bend right] (0) to (1);
		\draw [bend right] (1) to (2);
		\draw [bend left] (0) to (2);
		\draw (1) to (4);
		\draw [style=red] (3) to (4);
		\draw [style=red] (4) to (5);
		\draw [style=red] (5) to (6);
		\draw [style=red] (7) to (6);
		\draw [style=red] (3) to (7);
		\draw (3) to (1);
		\draw (0) to (3);
		\draw (8) to (3);
		\draw (8) to (7);
		\draw (0) to (8);
		\draw (1) to (5);
		\draw (1) to (14);
		\draw (14) to (2);
		\draw (5) to (14);
		\draw (6) to (14);
		\draw (0) to (9);
		\draw (9) to (2);
		\draw (0) to (12);
		\draw (9) to (12);
		\draw (8) to (12);
		\draw (8) to (10);
		\draw (12) to (10);
		\draw (7) to (10);
		\draw (7) to (11);
		\draw (6) to (11);
		\draw (10) to (11);
		\draw (11) to (14);
		\draw (11) to (2);
		\draw (11) to (13);
		\draw (13) to (2);
		\draw (9) to (13);
		\draw (9) to (10);
		\draw (10) to (13);
		\draw [bend right] (16) to (17);
		\draw [bend right] (17) to (18);
		\draw [bend left] (16) to (18);
		\draw (16) to (24);
		\draw (17) to (30);
		\draw (30) to (18);
		\draw (16) to (25);
		\draw (25) to (18);
		\draw (16) to (28);
		\draw (25) to (28);
		\draw (24) to (28);
		\draw (24) to (26);
		\draw (28) to (26);
		\draw (26) to (27);
		\draw (27) to (30);
		\draw (27) to (18);
		\draw (27) to (29);
		\draw (29) to (18);
		\draw (25) to (29);
		\draw (25) to (26);
		\draw (26) to (29);
		\draw (31) to (17);
		\draw (32) to (17);
		\draw (33) to (17);
		\draw (33) to (30);
		\draw (34) to (30);
		\draw (34) to (27);
		\draw (35) to (27);
		\draw (35) to (26);
		\draw (24) to (35);
		\draw (31) to (24);
		\draw (31) to (32);
		\draw (32) to (33);
		\draw (34) to (33);
		\draw (35) to (34);
		\draw (31) to (35);
		\draw (16) to (31);
		\draw [style=lightedge] (31) to (36);
		\draw [style=lightedge] (35) to (36);
		\draw [style=lightedge] (32) to (36);
		\draw [style=lightedge] (36) to (33);
		\draw [style=lightedge] (36) to (34);
\end{tikzpicture}
		\caption{Left: a near triangulation $G$ with a single large face $W$ (bold). Right: the augmented graph $G^\nabla$ --- the edges in $G^\nabla$ but not $G$ are bold, and the vertex is star-shaped.}
	\end{center}
	\label{fig:near_triangulation}
\end{figure}

\section{Preliminaries}
\label{sec:prelim}

We begin with some well known technical lemmas.
Throughout the paper, note that a plane graph is 2-connected if and only if every face is bounded by a cycle \textup{\cite{bondy_murty_gt}}.
A variation of Proposition \ref{prop:min_sep_is_adjacent} can be found in \textup{\cite{diestel_gt}}.

\begin{prop}\textup{\cite{diestel_gt}}
	Let $S$ be a cut of a graph $G$. Then $S$ is minimal if and only if every vertex of $S$ is adjacent to every component of $G-S$.
	\label{prop:min_sep_is_adjacent}
\end{prop}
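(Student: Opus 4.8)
The plan is to work directly from the reformulation recorded just after the definition of minimality: $S$ is a minimal cut if and only if $G-(S-v)$ is connected for every $v\in S$. The entire argument then rests on a single structural observation about the graph $G-(S-v)$, after which both implications fall out immediately.

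First I would record that the vertex set of $G-(S-v)$ is exactly $(V\setminus S)\cup\{v\}$, so this graph is obtained from $G-S$ by returning the single vertex $v$. Write $K_1,\dots,K_m$ for the components of $G-S$, noting $m\ge 2$ since $S$ is a cut. The crucial point is that $G$ contains no edge joining two distinct components $K_i$ and $K_j$: such an edge would already lie in $G-S$ and contradict that the $K_i$ are distinct components. Consequently, in $G-(S-v)$ the only vertex that can carry an edge between different components is $v$ itself, so $v$ is the unique potential \emph{bridge} reconnecting the pieces of $G-S$.

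For the forward direction I would assume $S$ minimal and fix $v\in S$, so that $G-(S-v)$ is connected. If $v$ were non-adjacent to some component $K_i$, then by the observation $K_i$ would have no edge to any other vertex of $G-(S-v)$, leaving it a separate component and contradicting connectivity. Hence $v$ is adjacent to every component, and since $v$ was arbitrary the condition holds throughout $S$. For the converse I would assume every vertex of $S$ meets every component and again fix $v\in S$. Each $K_i$ is internally connected and shares an edge with $v$, so $\{v\}\cup K_1\cup\dots\cup K_m$ is connected; that is, $G-(S-v)$ is connected. As this holds for all $v\in S$, the reformulation gives that $S$ is minimal.

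The argument is short, so the main obstacle is really just isolating the right observation: that no edges run between distinct components of $G-S$, which forces $v$ to be the sole connector in $G-(S-v)$. Once this is in hand, both implications are one line each; the only care needed is to keep straight that deleting $S-v$ means retaining $v$ while removing the rest of $S$, and to handle any number $m\ge 2$ of components uniformly.
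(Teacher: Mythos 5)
Your proof is correct. The paper itself gives no proof of this proposition --- it is quoted from Diestel's book --- so there is no in-paper argument to compare against; what you give is the standard elementary argument, built on the right observation that no edge of $G$ joins two distinct components of $G-S$, so that $v$ is the only possible connector in $G-(S-v)$.

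One remark on self-containedness: your converse direction leans on the nontrivial half of the paper's asserted reformulation (that $G-(S-v)$ connected for every $v\in S$ implies $G-T$ connected for \emph{every} proper $T\subset S$). Since the paper states this equivalence in its definitions, citing it is legitimate; but if you wanted the proof to stand entirely on its own, the same idea you already use closes the gap in one line: for a proper subset $T\subset S$, the graph $G-T$ is the union of the subgraphs $G-(S-v)$ over all $v\in S\setminus T$, each of which is connected (each $K_i$ is connected and joined to $v$ by an edge, for every component $K_i$ of $G-S$) and all of which contain the common nonempty vertex set of $G-S$; a union of connected subgraphs with pairwise nonempty intersection is connected, so $T$ is not a cut and $S$ is minimal.
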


\begin{prop}\textup{\cite{chordal_embeddings_pgs}}
	Let $G$ be a 3-connected planar graph. 
	If $S$ is a minimal separator of $G$, then $G-S$ has exactly two components.
	\label{prop:min_sep_3conn_2_components}
\end{prop}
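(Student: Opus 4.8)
The plan is to show that the existence of a third component would force a $K_{3,3}$ minor, contradicting planarity, so I would argue by contradiction. Since $S$ is a minimal separator, it is a minimal $a$--$b$ separator for some pair of vertices $a,b$ lying in distinct components of $G-S$; write $C_a$ and $C_b$ for those components. The first step is to establish that $C_a$ and $C_b$ are \emph{full}, i.e.\ every vertex of $S$ has a neighbour in each of them. This is where minimality (as opposed to merely being a cut) is used: if some $s\in S$ had no neighbour in $C_a$, then any $a$--$b$ path in $G-(S-s)$ would have to leave $C_a$ through $s$ and hence would exhibit a neighbour of $s$ in $C_a$ after all; so $S-s$ would already separate $a$ from $b$, contradicting minimality. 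The symmetric argument deals with $C_b$.

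Next I would suppose, for contradiction, that $G-S$ has a third component $C_3$. Its neighbourhood $N(C_3)$ is contained in $S$, and $N(C_3)$ separates the nonempty set $C_3$ from the nonempty set $C_a$, so $N(C_3)$ is a vertex cut of $G$. Since $G$ is $3$-connected, $|N(C_3)|\geq 3$, so I may choose three vertices $s_1,s_2,s_3\in N(C_3)\subseteq S$. By fullness each $s_i$ has a neighbour in $C_a$ and a neighbour in $C_b$, and by the choice of the $s_i$ each also has a neighbour in $C_3$.

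To finish, I would contract each of the connected, pairwise-disjoint vertex sets $C_a$, $C_b$, $C_3$ to single vertices $x$, $y$, $z$ respectively. Contracting connected subgraphs produces a minor of $G$, which is therefore still planar. In this minor each of $x,y,z$ is adjacent to all of $s_1,s_2,s_3$, so the parts $\{x,y,z\}$ and $\{s_1,s_2,s_3\}$ span a copy of $K_{3,3}$, which is impossible in a planar graph. Hence no third component exists, and since $S$ is a separator there are exactly two components of $G-S$.

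I expect the fullness step to be the main obstacle, since everything downstream is a clean planarity argument whereas the $K_{3,3}$ minor only materialises once both ``sides'' $C_a,C_b$ are known to reach every vertex of $S$. Put differently, the real content of the proposition is the interplay of two hypotheses: minimality of the separator is precisely what supplies two full components, while $3$-connectivity is precisely what forces any additional component to attach to at least three cut vertices — and these two facts together are exactly what a $K_{3,3}$ minor needs.
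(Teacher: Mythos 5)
Your proof is correct. Note that the paper does not prove this proposition at all --- it is quoted directly from \cite{chordal_embeddings_pgs} --- so there is no in-paper argument to compare against; what you have written is essentially the standard proof of this fact from the literature: minimality forces two \emph{full} components, 3-connectivity forces any third component to attach to at least three separator vertices, and contracting the three components produces a $K_{3,3}$ minor, contradicting planarity. Each step checks out, including the fullness argument (the last vertex before $s$ on an $a$--$b$ path in $G-(S-s)$ would be a neighbour of $s$ in $C_a$). One point in your favour: you correctly read ``minimal separator'' in the sense of a minimal $a$--$b$ separator for some pair $a,b$, which is the meaning used in the cited source and is strictly weaker than the paper's notion of inclusion-minimal cut (the paper itself notes that a minimal $A$--$B$ separator need not be a minimal cut); your argument therefore proves the proposition in the generality actually needed, and the fullness step is exactly the place where that weaker hypothesis suffices.
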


As a consequence of Propositions \ref{prop:min_sep_is_adjacent} and \ref{prop:min_sep_3conn_2_components}, we easily derive Proposition \ref{prop:min_ABsep_is_min_sep}.

\begin{prop}\textup{\cite{listing_min_seps_3conn_pg}}
	Let $G$ be a 3-connected planar graph, and $A$, $B$ two connected sets of vertices of $G$.
	If $S$ is a minimal $A-B$ separator, then it is also a minimal cut.
	\label{prop:min_ABsep_is_min_sep}
\end{prop}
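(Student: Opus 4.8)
The plan is to reduce the statement to the two propositions just quoted. First I would fix the structural picture: since $A$ and $B$ are connected and are disjoint from $S$, each of them lies entirely inside a single component of $G-S$; call these components $C_A$ and $C_B$, and note $C_A\neq C_B$ because $S$ separates $A$ from $B$. The goal then becomes to show that $S$ is a minimal cut, and by Proposition~\ref{prop:min_sep_is_adjacent} it suffices to check that $S$ is a cut (immediate, since every separator is a cut) together with the condition that every vertex of $S$ is adjacent to every component of $G-S$.

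The heart of the argument is to establish that both $C_A$ and $C_B$ are \emph{full}, meaning every vertex of $S$ has a neighbour in each. Fix $v\in S$. By minimality of $S$ as an $A-B$ separator, $S-v$ fails to separate $A$ from $B$, so there is an $A-B$ path $P$ in $G-(S-v)$. The only vertex of $S$ available to $P$ is $v$, so $P$ must pass through $v$; the subpath from $A$ to $v$ lies in $V\setminus S$ and meets $A$, hence lies in $C_A$, so the neighbour of $v$ along it is in $C_A$, and symmetrically $v$ has a neighbour in $C_B$. Thus every vertex of $S$ sees both $C_A$ and $C_B$.

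Next I would upgrade $S$ to a minimal separator so that Proposition~\ref{prop:min_sep_3conn_2_components} becomes applicable. Fixing arbitrary $a\in A$ and $b\in B$, the set $S$ is an $a-b$ separator, and for each $v\in S$ the neighbours $x\in C_A$ and $y\in C_B$ of $v$ supplied by the previous step let one splice together a walk $a\rightsquigarrow x$ inside $C_A$, the edges $xv$ and $vy$, and a walk $y\rightsquigarrow b$ inside $C_B$, yielding an $a-b$ connection in $G-(S-v)$ (using that $C_A$ and $C_B$ are connected). So $S-v$ no longer separates $a$ from $b$; since every proper $T\subset S$ is contained in some $S-v$, the single-vertex condition suffices and $S$ is a minimal $a-b$ separator, hence a minimal separator. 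Proposition~\ref{prop:min_sep_3conn_2_components} then forces $G-S$ to have exactly two components, which can only be $C_A$ and $C_B$.

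Combining the pieces, $G-S$ has precisely the two components $C_A$ and $C_B$, and by the fullness step every vertex of $S$ is adjacent to both; hence every vertex of $S$ is adjacent to every component of $G-S$, and Proposition~\ref{prop:min_sep_is_adjacent} yields that $S$ is a minimal cut. The main obstacle is not any single computation but the bookkeeping needed to pass from the $A-B$-separator hypothesis to the unqualified notion of a minimal cut: one must both verify the fullness of $C_A$ and $C_B$ and rule out stray extra components of $G-S$, and it is precisely the latter that genuinely requires the planarity-plus-$3$-connectivity input of Proposition~\ref{prop:min_sep_3conn_2_components} rather than elementary graph theory.
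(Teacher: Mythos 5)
Your proof is correct and follows exactly the route the paper intends: the paper derives Proposition~\ref{prop:min_ABsep_is_min_sep} as an ``easy consequence'' of Propositions~\ref{prop:min_sep_is_adjacent} and~\ref{prop:min_sep_3conn_2_components}, and your argument (fullness of $C_A$ and $C_B$ via minimality of the $A-B$ separator, upgrading $S$ to a minimal $a-b$ separator so that Proposition~\ref{prop:min_sep_3conn_2_components} excludes any third component, then applying Proposition~\ref{prop:min_sep_is_adjacent}) is precisely the omitted derivation, carried out correctly.
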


The following characterization of minimal cuts in triangulations is well known (see, eg, \cite{k_path_ham_baybars}), for an elementary proof see \cite{dist_pg_thesis}.

\begin{prop}
	Let $G$ be a maximal planar graph, and suppose $A, B$ are disjoint, connected sets of vertices in $G$.
	Every minimal cut of $G$ --- and every minimal $A-B$ separator --- induces a chordless Jordan separating cycle.
	Conversely, if $C$ is a chordless Jordan separating cycle of $G$, then $V(C)$ is a minimal cut.
	\label{prop:min_sep_mpg_is_cycle}
	\label{prop:chordless_cycle_is_min_sep}
\end{prop}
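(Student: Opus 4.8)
The plan is to prove the forward direction for minimal cuts first; the minimal $A$-$B$ separator case then follows at once from Proposition~\ref{prop:min_ABsep_is_min_sep}, and the converse will drop out of the forward direction by a short argument. I may assume $G$ has at least four vertices (otherwise $G=K_3$ has no cut and there is nothing to prove), so that the triangulation $G$ is $3$-connected (a standard fact for triangulations on at least four vertices) and Propositions~\ref{prop:min_sep_is_adjacent}--\ref{prop:min_ABsep_is_min_sep} all apply. Let $S$ be a minimal cut and let $C_1,\dots,C_k$ be the components of $G-S$. Because $S$ is a minimal cut, $G-T$ is connected for every proper $T\subset S$; fixing two components $C_1,C_2$, this says precisely that $S$ is a minimal $C_1$-$C_2$ separator, so Proposition~\ref{prop:min_sep_3conn_2_components} forces exactly two components. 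By Proposition~\ref{prop:min_sep_is_adjacent} every vertex of $S$ is adjacent to both, and since $N(C_i)\subseteq S$ trivially, we get $S=N(C_1)=N(C_2)$.

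The crux is to show that $G[S]$ is a cycle, and here I would argue with the rotation system at each $s\in S$. Since $G$ is a triangulation, any two neighbours of $s$ that are consecutive in the cyclic order around $s$ are adjacent (they bound a triangular face). As $C_1$ and $C_2$ are distinct components, no neighbour of $s$ in $C_1$ is adjacent to one in $C_2$, so in the rotation at $s$ the $C_1$-neighbours and the $C_2$-neighbours must be separated by $S$-neighbours. Using that $C_1$ and $C_2$ are each connected, I would show that the $C_1$-neighbours of $s$ form a single arc and the $C_2$-neighbours a single arc, so that exactly two neighbours of $s$ lie in $S$ and flank these arcs. Equivalently: deleting the connected set $V(C_2)$ leaves a plane graph $G-V(C_2)$ whose newly created face is bounded by a closed walk through $S$, and the degree-two condition just described upgrades this walk to a simple cycle $W$ spanning $S$. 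Proving the single-arc property --- equivalently, that $G-V(C_2)$ is $2$-connected --- is the main obstacle; I expect to establish it from $3$-connectivity of $G$ together with planarity (connectivity of $C_2$ severely restricts how the arcs at $s$ can interleave, and $3$-connectivity should rule out the remaining degenerate configurations).

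Granting that $G[S]=W$ is a cycle with $C_1\subseteq\int(W)$ and $C_2\subseteq\ext(W)$, the rest is short. Since $\int(W)$ and $\ext(W)$ each contain a vertex, $W$ is Jordan separating. For chordlessness, suppose $ss'$ were a chord. Drawn on the interior side, it bounds together with one $s$-$s'$ arc of $W$ a closed disc containing all of the connected set $C_1$; then any internal vertex $t$ of the other arc has no neighbour in $C_1$, contradicting Proposition~\ref{prop:min_sep_is_adjacent}. A chord on the exterior side contradicts adjacency to $C_2$ in the same way. Thus $G[S]$ is a chordless Jordan separating cycle, proving the forward statement for minimal cuts; the minimal $A$-$B$ separator case is then immediate from Proposition~\ref{prop:min_ABsep_is_min_sep}.

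Finally, for the converse, let $C$ be a chordless Jordan separating cycle. By the Jordan curve theorem $\int(C)$ and $\ext(C)$ are nonempty and no edge of $G$ joins them, so $V(C)$ is a cut and therefore contains some minimal cut $S'\subseteq V(C)$. By the forward direction, $G[S']$ is a cycle. But $C$ is chordless, so $G[V(C)]$ is exactly the cycle $C$, whose only cyclic subgraph is $C$ itself; hence $S'=V(C)$, and $V(C)$ is a minimal cut. This reduction of the converse to the forward direction is what lets me avoid separately proving that $\int(C)$ and $\ext(C)$ are connected, which would otherwise be a second technical hurdle.
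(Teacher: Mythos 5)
First, a point of reference: the paper does not actually prove this proposition --- it is imported as a known result, with the proof delegated to \cite{k_path_ham_baybars} and \cite{dist_pg_thesis}. So your attempt can only be judged against the standard proofs of this fact, not against anything in the text. Your skeleton is the right one, and two pieces are genuinely good: the reduction of the $A$--$B$ separator case to the minimal-cut case via Proposition \ref{prop:min_ABsep_is_min_sep}, and the converse, where you extract a minimal cut $S'\subseteq V(C)$, invoke the forward direction, and observe that the only induced cycle inside the chordless cycle $C$ is $C$ itself; this neatly sidesteps proving that $\textup{Int}(C)$ and $\textup{Ext}(C)$ are connected. The chord-elimination argument is also sound.

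There is, however, a genuine gap, and it sits exactly at the heart of the proposition: the single-arc property (equivalently, that the boundary walk of the face of $G-V(C_2)$ created by deleting $C_2$ is a \emph{simple} cycle through all of $S$) is announced with ``I expect to establish it'' and never proved. That claim is not a technicality one may defer --- it \emph{is} the content of the statement; everything else you wrote is scaffolding around it. For the record, it can be closed with exactly the ingredients you name: if the $C_1$-neighbours of some $s\in S$ did not form one arc of the rotation at $s$, there would be $a,a'\in N(s)\cap V(C_1)$ and $x,y\in N(s)\setminus V(C_1)$ appearing in the cyclic order $a,x,a',y$ around $s$; taking an $a$--$a'$ path $P$ inside the connected set $C_1$, the cycle $Z$ formed by $P$ together with the edges $sa$ and $sa'$ separates $x$ from $y$, yet $V(Z)\subseteq V(C_1)\cup\{s\}$, while both $x$ and $y$ lie in $(S\setminus\{s\})\cup V(C_2)$, a set that is connected (every vertex of $S$ has a neighbour in $C_2$ by Proposition \ref{prop:min_sep_is_adjacent}, and $C_2$ is connected) and disjoint from $Z$ --- a contradiction. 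A second, smaller slip: single arcs for the $C_1$- and $C_2$-neighbours give each $s\in S$ \emph{at least} two neighbours in $S$, not exactly two, since each of the two gaps between the arcs could a priori contain several consecutive $S$-vertices; this is harmless only because, once the spanning cycle of $G[S]$ exists, your chord argument retroactively forces degree exactly two, but as written the ``exactly two'' assertion is unjustified.
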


The characterization in Proposition \ref{prop:min_sep_mpg_is_cycle} can be easily used to describe the minimal cuts of near-triangulations.

\begin{thm}\textup{\cite{on_mpgs}}
	Suppose $G$ is a near-triangulation, $S$ is a minimal cut of $G$, and $W$ is the single large face of $G$. 
	Then we have one of the following:
	\begin{itemize}
		\item $G[S]$ is a chordless cycle such that $S\cap V(W)$ induces an empty graph, a single vertex, or a single edge, or
		\item $G[S]$ is a path $P = v_0, v_1, \dots, v_k$ such that $P\cap W = \{v_0, v_k\}$.
	\end{itemize}
	\label{thm:min_sep_of_near_triangulation}
\end{thm}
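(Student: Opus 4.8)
The plan is to pass to the auxiliary graph $G^\nabla$, which is a triangulation (hence $3$-connected), and pull the chordless-cycle characterization of minimal cuts in triangulations (Proposition \ref{prop:min_sep_mpg_is_cycle}) back to $G$. Write $v_W$ for the vertex added inside the unique large face $W$, so that in $G^\nabla$ the neighbourhood of $v_W$ is exactly $V(W)$, and note $G^\nabla-v_W=G$.

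First I would lift $S$ to a minimal cut of $G^\nabla$. Fix two components $D_1,D_2$ of $G-S$. Since $G^\nabla-(S\cup\{v_W\})=G-S$, the set $S\cup\{v_W\}$ separates $D_1$ from $D_2$ in $G^\nabla$; let $S^*\subseteq S\cup\{v_W\}$ be a minimal $D_1$--$D_2$ separator of $G^\nabla$. By Proposition \ref{prop:min_sep_is_adjacent} every $v\in S$ has a neighbour in each of $D_1,D_2$, so the length-$2$ path through $v$ would rejoin $D_1$ to $D_2$ in $G^\nabla-(S^*\setminus\{v\})$; hence $v\in S^*$, and therefore $S\subseteq S^*\subseteq S\cup\{v_W\}$. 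Thus $S^*$ is $S$ or $S\cup\{v_W\}$, and since $D_1,D_2$ are connected, Proposition \ref{prop:min_ABsep_is_min_sep} makes $S^*$ a minimal cut of $G^\nabla$, so by Proposition \ref{prop:min_sep_mpg_is_cycle} the graph $C:=G^\nabla[S^*]$ is a chordless Jordan separating cycle. The argument then splits on whether $v_W\in S^*$.

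If $v_W\in S^*$, then $S=S^*\setminus\{v_W\}$ and $C-v_W$ is a path $P=v_0,\dots,v_k$ whose endpoints are the two cycle-neighbours of $v_W$, which lie on $W$; any internal $v_i\in V(W)$ would be adjacent to $v_W$ and hence yield a chord of $C$. As deleting $v_W$ removes only its incident edges, $G[S]=G^\nabla[S]=P$ with $P\cap W=\{v_0,v_k\}$, which is exactly the path case. If instead $v_W\notin S^*$, then $S^*=S$ and $G[S]=C$ is a chordless cycle, and it remains to bound $R:=S\cap V(W)$. Here $v_W$ lies on one side of $C$, say $v_W\in\int(C)$; since no edge of $G^\nabla$ crosses $C$ and $v_W$ sees all of $V(W)$, the whole open face $f_W$ and every vertex of $W$ lie in $\int[C]$, and $C$ meets the closed disc $\overline{f_W}$ only along $W$. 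Consequently $C\cap W$ is a disjoint union of maximal subpaths (``runs'') of $W$, the complementary arcs (``gaps'') lie strictly inside $C$, and the runs cut $\int(C)\setminus\overline{f_W}$ into one lune-shaped region per gap. I would then show there is exactly one run, of length at most one: a run with an interior vertex gives a vertex of $S$ whose only interior incidence is the empty face $f_W$, and with two or more runs some run-endpoint, or some vertex of $C$ on an arc between consecutive runs, borders only part of these regions; in either situation that vertex of $S$ is non-adjacent to a component of $G-S$, contradicting Proposition \ref{prop:min_sep_is_adjacent}. Hence $R$ is empty, a single vertex, or a single edge, giving the cycle case.

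I expect this last case to be the crux. The lifting and the path case are short and essentially formal, whereas ruling out multiple runs --- equivalently, showing the cut can follow the large face for at most one edge --- requires combining the planarity of $C$ with the fact that in $G$ itself the empty face $f_W$ genuinely disconnects the gaps of $W$; it is precisely the \emph{absence} of $v_W$ that produces the extra components against which minimality is tested. A small but necessary point is that consecutive runs are separated by a nonempty gap (otherwise two on-$C$ vertices of $W$ adjacent along $W$ but nonconsecutive on $C$ would form a chord), so that these components are genuine and the contradiction is valid.
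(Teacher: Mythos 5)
Your proposal is correct and follows exactly the route the paper itself indicates: the paper gives no full proof of Theorem \ref{thm:min_sep_of_near_triangulation} (it is deferred to \cite{on_mpgs}), saying only that one should extend the cut of $G$ to a cut of the auxiliary graph $G^\nabla$ and apply Proposition \ref{prop:min_sep_mpg_is_cycle} --- which is precisely your lifting of $S$ to the minimal separator $S^*$ with $S\subseteq S^*\subseteq S\cup\{v_W\}$, followed by the case split on whether $v_W\in S^*$. The details you supply beyond that sketch (the path case via chordlessness, and the runs/gaps/lunes argument against multiple or long runs in the cycle case, where minimality is violated because some vertex of the cycle borders no component inside one of the lunes) are sound, noting that the component $D_i$ trapped inside the cycle guarantees the interior is nonempty so the contradictions are genuine.
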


\begin{figure}[h]
	\begin{center}
		\begin{tikzpicture}
	[scale=0.6,inner sep=0.8mm, 
	blackvertex/.style={circle,thick,draw, fill=black},
	whitevertex/.style={circle,thick,draw, fill=white},
	none/.style={},
	red/.style={line width=2pt}]
	
		\node [style=blackvertex] (0) at (-2.5, 2.25) {};
		\node [style=blackvertex] (1) at (-1.75, 1) {};
		\node [style=blackvertex] (2) at (-1.75, -0.25) {};
		\node [style=blackvertex] (3) at (-2.5, -1.75) {};
		\node [style=blackvertex] (4) at (2.25, 3.25) {};
		\node [style=blackvertex] (5) at (4.25, 3) {};
		\node [style=blackvertex] (6) at (3.5, 0.75) {};
		\node [style=blackvertex] (7) at (1.75, 1.75) {};
		\node [style=blackvertex] (8) at (0.75, 0.5) {};
		\node [style=blackvertex] (9) at (0, -1.25) {};
		\node [style=blackvertex] (10) at (2.25, -1.25) {};
		\node [style=whitevertex] (11) at (-3.75, 1.5) {};
		\node [style=whitevertex] (12) at (-3, 0.5) {};
		\node [style=whitevertex] (13) at (-4, -1) {};
		\node [style=whitevertex] (14) at (-4.5, 0.25) {};
		\node [style=whitevertex] (15) at (-1, -1) {};
		\node [style=whitevertex] (16) at (-0.75, -2.25) {};
		\node [style=whitevertex] (17) at (1.25, -2.25) {};
		\node [style=whitevertex] (18) at (1, -0.75) {};
		\node [style=whitevertex] (19) at (2.75, 2.5) {};
		\node [style=whitevertex] (20) at (3.25, 1.75) {};
		\node [style=whitevertex] (21) at (-0.75, 0.5) {};
		\node [style=whitevertex] (22) at (-1.25, 2.75) {};
		\node [style=whitevertex] (23) at (0, 1.5) {};
		\node [style=whitevertex] (24) at (1, 3.25) {};
		\node [style=whitevertex] (25) at (2, 0.5) {};
		\node [style=whitevertex] (26) at (3.25, -0.25) {};
		\node [style=whitevertex] (27) at (3.25, -1.5) {};
		\node [style=whitevertex] (28) at (5, 0) {};
		\node [style=whitevertex] (29) at (5, 1.5) {};
		\node [style=whitevertex] (30) at (0, 3) {};

		\draw (11) to (14);
		\draw (14) to (13);
		\draw (0) to (11);
		\draw (14) to (12);
		\draw (11) to (12);
		\draw (12) to (13);
		\draw [style=red] (0) to (1);
		\draw [style=red] (1) to (2);
		\draw [style=red] (2) to (3);
		\draw [style=red] (8) to (9);
		\draw [style=red] (8) to (10);
		\draw [style=red] (10) to (9);
		\draw [style=red] (4) to (7);
		\draw [style=red] (7) to (6);
		\draw [style=red] (5) to (6);
		\draw [style=red] (4) to (5);
		\draw (0) to (22);
		\draw (22) to (30);
		\draw (30) to (24);
		\draw (24) to (4);
		\draw (24) to (7);
		\draw (30) to (23);
		\draw (22) to (23);
		\draw (22) to (1);
		\draw (1) to (23);
		\draw (30) to (7);
		\draw (7) to (23);
		\draw (4) to (19);
		\draw (19) to (7);
		\draw (7) to (20);
		\draw (19) to (20);
		\draw (19) to (5);
		\draw (5) to (20);
		\draw (20) to (6);
		\draw (5) to (29);
		\draw (29) to (6);
		\draw (6) to (28);
		\draw (29) to (28);
		\draw (7) to (25);
		\draw (25) to (26);
		\draw (25) to (6);
		\draw (6) to (26);
		\draw (26) to (28);
		\draw (28) to (27);
		\draw (26) to (27);
		\draw (10) to (26);
		\draw (10) to (27);
		\draw (27) to (17);
		\draw (9) to (17);
		\draw (10) to (17);
		\draw (18) to (9);
		\draw (8) to (18);
		\draw (18) to (10);
		\draw (25) to (10);
		\draw (8) to (25);
		\draw (7) to (8);
		\draw (23) to (8);
		\draw (23) to (21);
		\draw (21) to (8);
		\draw (1) to (21);
		\draw (21) to (2);
		\draw (21) to (15);
		\draw (15) to (3);
		\draw (3) to (16);
		\draw (2) to (15);
		\draw (21) to (9);
		\draw (15) to (9);
		\draw (15) to (16);
		\draw (16) to (9);
		\draw (16) to (17);
		\draw (0) to (12);
		\draw (1) to (12);
		\draw (12) to (2);
		\draw (12) to (3);
		\draw (13) to (3);

\end{tikzpicture}
		\caption{Three different minimal cuts (bold) of a near-triangulation.}
	\end{center}
	\label{fig:min_seps_near_mpg}
\end{figure}
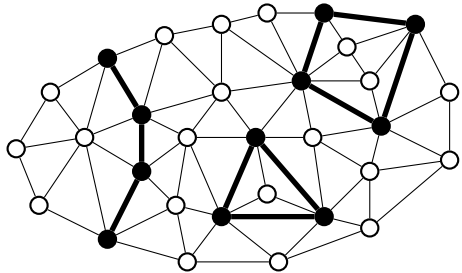

\begin{cor}
	Every near-triangulation is cleavable.
	\label{cor:near_triangulation_cleavable}
\end{cor}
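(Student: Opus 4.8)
The plan is to derive this corollary almost immediately from the structural characterization in Theorem \ref{thm:min_sep_of_near_triangulation}, after disposing of a degenerate boundary case. Recall that by definition $G$ is cleavable exactly when every minimal cut $S$ is connected, i.e.\ when $G[S]$ is connected. So it suffices to show that for every minimal cut $S$ of a near-triangulation $G$, the induced subgraph $G[S]$ is connected.

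First I would separate two cases according to the number of large faces, since the definition of near-triangulation permits \emph{at most} one large face. If $G$ has no large face, then $G$ is a triangulation (a maximal plane graph), and Proposition \ref{prop:min_sep_mpg_is_cycle} applies directly: every minimal cut of $G$ induces a chordless Jordan separating cycle. A cycle is connected, so $G[S]$ is connected in this case. If instead $G$ has exactly one large face $W$, then Theorem \ref{thm:min_sep_of_near_triangulation} applies to the minimal cut $S$, yielding two possibilities: either $G[S]$ is a chordless cycle, or $G[S]$ is a path $v_0, v_1, \dots, v_k$. Both cycles and paths are connected graphs, so in either possibility $G[S]$ is connected.

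Combining the cases, every minimal cut $S$ of $G$ satisfies $G[S]$ connected, which is precisely the statement that $G$ is cleavable. I would close by remarking that this completes the proof, as the two cases are exhaustive: a near-triangulation has either zero or exactly one large face, and both are covered.

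There is no real obstacle here, as Theorem \ref{thm:min_sep_of_near_triangulation} does all the heavy lifting; the only point requiring care is the bookkeeping around the definition. The theorem is phrased in terms of \emph{the} single large face $W$, so it tacitly assumes one exists, whereas the definition of near-triangulation includes triangulations (zero large faces) as a limiting case. Handling that boundary case via Proposition \ref{prop:min_sep_mpg_is_cycle} is the one step one must not overlook; everything else is the observation that both a cycle and a path are connected.
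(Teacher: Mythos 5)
Your proof is correct and matches the paper's intended derivation: the corollary is stated as an immediate consequence of Theorem \ref{thm:min_sep_of_near_triangulation}, since a chordless cycle and a path are both connected, which is exactly your argument. Your extra care with the zero-large-face case (falling back on Proposition \ref{prop:min_sep_mpg_is_cycle} for triangulations) is a reasonable piece of bookkeeping that the paper leaves implicit.
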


A full proof of Theorem \ref{thm:min_sep_of_near_triangulation} will appear in \cite{on_mpgs} --- but the reader can easily convince themselves of it by extending a cut of $G$ to a cut of the auxiliary graph $G^\nabla$ and applying Proposition \ref{prop:min_sep_mpg_is_cycle}.
Note that these results can be generalized further: it is proven in \textup{\cite{min_disc_cuts_in_pgs}} that every minimal separator of a planar graph induces either a cycle, or a disjoint union of paths.

\section{Neighborhoods and faces}
\label{sec:nhoods}

We'll need some simple, well known facts about 3-connected planar graphs.
Consider a face-cycle $C = v_0, \dots, v_k$ of a plane graph, and suppose $C$ has a chord $v_iv_j$.
The set $\{v_i,v_j\}$ is a $v_s-v_t$ separator for any $s$ and $t$ such that $v_i, v_s, v_j, v_t$ appear in that cyclic order around $C$, yielding Remark \ref{rem:3conn_face_cycle_chordless} below.

\begin{rem}
	Let $G$ be a plane graph, and $C$ a face cycle in $G$.
	If $G$ is 3-connected, then $C$ is chordless.
	\label{rem:3conn_face_cycle_chordless}
\end{rem}

Suppose $W$ and $X$ are two large faces of a plane graph such that $X\cap W$ contains a pair $u,v$ of non-adjacent vertices. 
Then we can construct a Jordan curve $J$ starting at $u$, traversing through the face $f_X$ to $v$, then passing through $f_W$ back to $u$. 
Since $u$ and $v$ are not adjacent in $W$, the curve $J$ will contain vertices of $W$ in both its interior and exterior --- so $\{u,v\}$ will be a cut. 
Thus we have Remark \ref{rem:wells_intersect_at_k1_or_k2}.

\begin{rem}
	Let $G$ be a 3-connected plane graph, and suppose $X$ and $W$ are two face cycles in $G$.
	Then $X\cap W$ is either a single edge lying on both $f_X$ and $f_W$, or it is a single vertex.
	\label{rem:wells_intersect_at_k1_or_k2}
\end{rem}

The following is a slight strengthening of the well known fact that no face-cycle disconnects a 3-connected planar graph (see, for example, \cite{bondy_murty_gt}).

\begin{lem}
	Let $G$ be a 3-connected plane graph, and $C$ a face-cycle of $G$.
	For every pair $u,v$ of vertices in $G$, there exists a $u-v$ path that is internally disjoint from $C$.
	\label{lem:3conn_path_avoids_face}
\end{lem}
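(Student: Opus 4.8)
The plan is to reduce everything to the well-known fact quoted just above the lemma, namely that a face-cycle does not disconnect a 3-connected plane graph, i.e. $G - V(C)$ is connected. Once that is in hand, the only remaining work is to handle the cases where $u$ or $v$ happens to lie on $C$, and for those I would simply ``push'' each offending endpoint one step to a neighbour lying off $C$.

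First I would record the two ingredients. (i) Since $G$ is 3-connected its minimum degree is at least $3$, and by Remark \ref{rem:3conn_face_cycle_chordless} the face-cycle $C$ is chordless; hence every vertex of $C$ has exactly two neighbours on $C$ and therefore at least one neighbour in $V(G)\setminus V(C)$. (ii) The subgraph $G - V(C)$ is connected: the face bounded by $C$ contains no vertex of $G$, so one side of the Jordan curve $C$ is vertex-free, $C$ is not Jordan-separating, and consequently $V(C)$ is not a cut. (If I preferred not to quote (ii), this one-line argument is essentially its proof.)

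Next I would introduce surrogate endpoints. Set $u^\ast = u$ if $u \notin V(C)$, and otherwise let $u^\ast$ be a neighbour of $u$ off $C$, which exists by (i); define $v^\ast$ symmetrically. Then $u^\ast, v^\ast$ are vertices of $G - V(C)$, so by (ii) there is a $u^\ast$--$v^\ast$ path $Q$ lying entirely in $G - V(C)$ (the trivial one-vertex path when $u^\ast = v^\ast$). Prepending the edge $uu^\ast$ when $u \in V(C)$ and appending the edge $v^\ast v$ when $v \in V(C)$ produces a walk from $u$ to $v$; since $V(Q) \cap V(C) = \emptyset$ we have $u, v \notin V(Q)$, so this walk is in fact a simple $u$--$v$ path $P$. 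Every internal vertex of $P$ lies on $Q$, hence off $C$, giving $\ins{P}\cap C = \emptyset$, as required.

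The only genuine content is ingredient (ii); the rest is bookkeeping, so I do not expect a serious obstacle. The one place to stay careful is the collection of degenerate subcases — $u^\ast = v^\ast$, the endpoints $u,v$ adjacent on $C$, or exactly one endpoint lying off $C$ — but in each of these the same construction still yields a valid $u$--$v$ path with no internal vertex on $C$, so they can be dispatched uniformly rather than treated separately.
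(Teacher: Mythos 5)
Your endpoint-pushing reduction is sound as bookkeeping, but it rests entirely on ingredient (ii), so whether the proposal stands depends on how (ii) is justified. If you simply quote (ii) from the literature (as the paper itself licenses: it calls this ``the well known fact that no face-cycle disconnects a 3-connected planar graph'' and cites Bondy--Murty), then your proof is correct and genuinely different from the paper's. The paper proves the lemma directly from Menger's theorem: three internally disjoint $u$--$v$ paths $P,Q,R$ cut the plane into three regions, the face bounded by $C$ lies inside one of the three cycles, say $Q\cup R$, and then $P$ is internally disjoint from $C$. In doing so the paper obtains the weaker ``well known fact'' as a byproduct rather than using it as input. Your route goes the other way: it shows the ``slight strengthening'' is only degree-counting (minimum degree $3$ plus chordlessness of $C$, Remark~\ref{rem:3conn_face_cycle_chordless}) away from the known fact. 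That is a legitimate, modular alternative, and your handling of the degenerate cases is correct.

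The genuine gap is the parenthetical claim that your one-line Jordan-curve argument ``is essentially the proof'' of (ii). The inference ``$C$ is not Jordan-separating, consequently $V(C)$ is not a cut'' is invalid in a general plane graph. Counterexample: let $C = v_1v_2v_3v_4$ bound the outer face, and place two vertices inside, $x$ adjacent to $v_1,v_2$ and $y$ adjacent to $v_3,v_4$. This graph is $2$-connected, every face is bounded by a cycle, one side of $C$ is vertex-free (so $C$ is not Jordan-separating), yet $G - V(C) = \{x,y\}$ is disconnected. The implication you are implicitly using --- every cut arises from a Jordan-separating cycle --- holds for maximal planar graphs (Proposition~\ref{prop:chordless_cycle_is_min_sep}), but not for general plane graphs, where minimal cuts correspond to Jordan curves that may pass through faces rather than along cycles of the graph. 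Tellingly, your one-liner never invokes $3$-connectivity, whereas (ii) is false without it: it is one direction of Tutte's theorem that the face cycles of a $3$-connected plane graph are exactly its non-separating induced cycles, and any honest proof requires an argument of roughly the same weight as the paper's Menger argument. So: cite (ii) and your proof is complete; claim to prove (ii) by that one-liner and the proof fails at exactly the point you identified as its only genuine content.
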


\begin{proof}
	Let $f$ be the face bounded by $C$.
	By Menger's theorem, there exist three internally disjoint $u-v$ paths $P,Q$ and $R$.
	These paths create three cycles $P\cup Q$, $Q\cup R$ and $P\cup R$.
	Up to relabelling of the paths and regions, the face $f$ lies in the interior of $Q\cup R$, and $P$ is the desired path.
\end{proof}

We'll need the following simple lemma. 

\begin{lem}
	Let $G$ be a 4-connected planar graph, and $u$ a vertex of $G$.
	Then $\n(u)$ is a minimal cut of $G$.
	\label{lem:neighbourhood_minn_sep_4conn}
\end{lem}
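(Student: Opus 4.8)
The plan is to show that $\n(u)$ is a cut and then verify minimality via Proposition \ref{prop:min_sep_is_adjacent}. Write $R = V \setminus (\{u\} \cup \n(u))$ for the set of vertices at distance at least $2$ from $u$. First I would check $R \neq \emptyset$: if $u$ were adjacent to every other vertex, then deleting $u$ would leave a plane graph all of whose vertices lie on a single face, i.e.\ an outerplanar graph, which has a vertex of degree at most $2$; that vertex would have degree at most $3$ in $G$, contradicting $4$-connectivity. Hence $R \neq \emptyset$, and since deleting $\n(u)$ isolates $u$, the set $\n(u)$ is a cut whose remaining vertices split into $\{u\}$ and $R$. By Proposition \ref{prop:min_sep_is_adjacent} it then suffices to show that every $w \in \n(u)$ meets each component of $G - \n(u)$; as every such $w$ is adjacent to $u$, I only need two claims: (a) $G[R]$ is connected, so $\{u\}$ and $R$ are the \emph{only} components; and (b) every $w \in \n(u)$ has a neighbour in $R$.

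For (a), delete $u$. Since $G$ is $4$-connected, $G-u$ is $3$-connected, and the faces of $G$ incident with $u$ merge into a single face of $G-u$ bounded by a cycle $Z$, with every neighbour of $u$ lying on $Z$, so $\n(u) \subseteq V(Z)$. Applying Lemma \ref{lem:3conn_path_avoids_face} to $G-u$ with the face-cycle $Z$, any two vertices $x,y \in R$ are joined by a path internally disjoint from $Z$; its internal vertices avoid $Z \supseteq \n(u)$ and hence lie in $R$, so $x$ and $y$ are joined within $G[R]$. Thus $R$ is connected.

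For (b), suppose some $w \in \n(u)$ has no neighbour in $R$, so all its neighbours lie in $\{u\} \cup \n(u)$. As $G$ is $4$-connected we have $\deg(w) \geq 4$, hence $w$ is adjacent to at least three vertices of $\n(u)$. Writing the rotation of $\n(u)$ around $u$ as $w_1, \dots, w_d$ with $w = w_i$, at most the two cyclically adjacent vertices $w_{i-1}, w_{i+1}$ can be consecutive with $w$, so $w$ is adjacent to some non-consecutive $w_j$. Then $u, w, w_j$ is a triangle, and the edges $uw$ and $uw_j$ split the remaining neighbours of $u$ into two nonempty arcs lying on opposite sides of this triangle; hence $\{u, w, w_j\}$ is a $3$-cut, contradicting $4$-connectivity. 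This establishes (b), and together with (a) and Proposition \ref{prop:min_sep_is_adjacent} it shows that $\n(u)$ is a minimal cut.

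I expect the main obstacle to be step (b): converting ``$w$ has no neighbour in $R$'' into a genuine separating triangle requires care with the planar rotation at $u$, in particular checking that both arcs of neighbours are nonempty, which is exactly where $\deg(u) \geq 4$ and the non-consecutiveness of $w_j$ are used. A secondary technical point is justifying that the merged face in $G-u$ is a single cycle $Z$ containing all of $\n(u)$, since this is what makes the application of Lemma \ref{lem:3conn_path_avoids_face} in (a) clean.
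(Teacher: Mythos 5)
Your proposal is correct, and its core --- part (a) --- is exactly the paper's argument: pass to the 3-connected graph $G-u$, observe that $\n(u)$ lies on the cycle $Z$ bounding the face where $u$ was, and invoke Lemma \ref{lem:3conn_path_avoids_face} to connect $G-\n[u]$. Where you diverge is in the two points the paper compresses: the paper dismisses cut-ness with ``it's clear,'' and claims that connectedness of $G-\n[u]$ alone suffices for minimality via Proposition \ref{prop:min_sep_is_adjacent}. Strictly speaking, that proposition also demands that every $w\in \n(u)$ have a neighbour in the component $R = V\setminus \n[u]$, a condition the paper never verifies explicitly --- so your part (b) addresses a genuine (if small) gap rather than redundant detail. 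Your route through part (b), via the rotation at $u$ and the fact that a chord $ww_j$ to a non-consecutive neighbour would create a separating triangle, is a valid but heavier tool than needed: the paper's own Lemma \ref{lem:3conn_path_avoids_face} dispatches it in one line, since a $w$--$r$ path ($r\in R$) internally disjoint from $Z$ has its second vertex off $Z$, hence in $R$, giving $w$ the required neighbour. Your nonemptiness argument for $R$ (a universal vertex would force $G-u$ to be outerplanar, hence have a vertex of degree at most $2$, violating minimum degree $4$) is likewise sound and more explicit than the paper's. In short: same key lemma and decomposition, with your version trading the paper's brevity for completeness, at the cost of one detour (the separating-triangle argument) that the shared lemma already covers.
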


\begin{proof}
	It's clear that $\n(u)$ is a cut, so it remains to prove minimality. 
	Per Proposition \ref{prop:min_sep_is_adjacent}, it suffices to prove that $G-\n[u]$ is connected.
	Since $G$ is 4-connected, $G-u$ is 3-connected, so every face of $G-u$ is bounded by a cycle.
	In particular, the face of $G-u$ containing $u$ is bounded by a cycle containing every vertex of $\n(u)$.
	Thus $G-\n[u]$ is connected, by Lemma \ref{lem:3conn_path_avoids_face}.
\end{proof}

\begin{cor}
	Suppose $G$ is a 4-connected, cleavable plane graph.
	For every vertex $u$ of $G$, the neighbourhood $\n(u)$ is connected.
	\label{cor:cleavable_nhood_is_connected}
\end{cor}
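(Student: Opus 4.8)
The plan is to chain Lemma~\ref{lem:neighbourhood_minn_sep_4conn} together with the definition of cleavability; both ingredients are already in hand, so there is very little to do. First I would invoke Lemma~\ref{lem:neighbourhood_minn_sep_4conn}: since $G$ is a 4-connected plane (hence planar) graph, for every vertex $u$ the neighbourhood $\n(u)$ is a minimal cut of $G$. This is the only substantive fact required, and it has already been proved.

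Second, I would apply the cleavability hypothesis. By definition, $G$ is cleavable precisely when every minimal cut of $G$ is connected. Because $\n(u)$ is a minimal cut, it follows immediately that $\n(u)$ is connected, which is exactly the assertion of the corollary.

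I expect no genuine obstacle here: the statement is a direct specialization of Lemma~\ref{lem:neighbourhood_minn_sep_4conn} under the added hypothesis of cleavability, and the argument is a two-step syllogism. The only point worth a moment's care is confirming that the hypotheses of Lemma~\ref{lem:neighbourhood_minn_sep_4conn} are met, namely 4-connectivity and planarity; both are supplied by the assumption that $G$ is a 4-connected plane graph, so no further verification is needed.
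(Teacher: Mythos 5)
Your proposal is correct and is exactly the argument the paper intends: Corollary~\ref{cor:cleavable_nhood_is_connected} follows immediately from Lemma~\ref{lem:neighbourhood_minn_sep_4conn} (which gives that $\n(u)$ is a minimal cut) combined with the definition of cleavability (every minimal cut is connected). No gaps; this matches the paper's implicit proof.
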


The connectivity requirement of Lemma \ref{lem:neighbourhood_minn_sep_4conn} cannot be improved. 
Consider the graph $G = P_5 + K_2$ formed by making every vertex of a $P_5$ adjacent to every vertex of a $K_2$.
This is a 3-connected planar graph, and if $u$ is a vertex of degree 4, then $\n(u)$ is not a minimal cut of $G$.

The next Proposition shows that in a 4-connected cleavable plane graph, no two large faces intersect.

\begin{prop}
	Let $G$ be a 4-connected plane graph, and let $W, X$ be two large faces of $G$.
	If every neighbourhood of $G$ is connected, then $W\cap X = \emptyset$.
	\label{prop:wells_dont_intersect}
\end{prop}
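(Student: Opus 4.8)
The plan is to argue by contradiction: assume $W\cap X\neq\emptyset$ and produce a set of three vertices that separates $G$, contradicting $4$-connectivity. By Remark \ref{rem:wells_intersect_at_k1_or_k2}, the intersection is either a single vertex or a single edge, so there are two cases. In both, there is a vertex $v$ incident to \emph{both} faces $f_W$ and $f_X$ (the single shared vertex in the first case, and either endpoint of the shared edge in the second), and I would build the separator around $v$.

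First I would analyse the rotation (cyclic order of neighbours) at $v$. The faces $f_W$ and $f_X$ appear as two ``gaps'' between consecutive neighbours of $v$, and since $W$ and $X$ are chordless face cycles (Remark \ref{rem:3conn_face_cycle_chordless}), the two $W$-neighbours and two $X$-neighbours of $v$ are well understood: they are distinct exactly because $W\cap X$ is only a vertex or an edge. These two gaps split $\n(v)$ into two vertex-parts (single-vertex case) or isolate the shared neighbour $u$ from the rest (shared-edge case). Because the hypothesis guarantees $G[\n(v)]$ is connected, there must be an edge crossing between the parts, say $ab$ (respectively, an edge $uc$ from the squeezed neighbour $u$ to some other neighbour $c$). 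Together with the two edges joining its endpoints to $v$, this edge forms a triangle $T$.

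The heart of the argument is a Jordan-curve observation: by the way $a$ and $b$ were chosen on opposite arcs of the rotation at $v$, the triangle $T$ is a closed curve with $f_W$ strictly on one side and $f_X$ strictly on the other. Since $W$ and $X$ are large, each contains a ``far'' vertex not among $\{v,a,b\}$, and chordlessness ensures such far vertices are not neighbours of $v$, so they avoid $T$ entirely and lie in the two open regions it bounds. Hence any path between a far vertex of $W$ and a far vertex of $X$ must cross $T$ and therefore pass through one of its three vertices, exhibiting a $3$-cut and contradicting $4$-connectivity.

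The main obstacle is making this separation rigorous and robust. I must verify that the far vertices genuinely lie in the open regions (not on $T$), that the distinguished neighbours of $v$ are distinct (which is precisely where $W\cap X$ being a single vertex or a single edge is used), and that the conclusion is unaffected by \emph{other} large faces that may be incident to $v$: the key point is that the gaps $f_W$ and $f_X$ always fall on opposite sides of the chord, regardless of what lies on the arcs between them. I would also dispose of degenerate subcases with the minimum-degree-$4$ consequence of $4$-connectivity; for instance, in the shared-edge case a vertex $c\neq w_1,x_1$ adjacent to $u$ is needed, and degree at least $4$ guarantees the relevant arc is long enough to contain one.
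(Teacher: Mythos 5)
Your argument is correct, but it takes a genuinely different route from the paper's. You work entirely inside $G$: the rotation at the shared vertex $v$ splits $\n(v)$ into two parts flanked by the gaps of $f_W$ and $f_X$; the assumed connectivity of $G[\n(v)]$ then forces a bridging edge, and the resulting triangle is a Jordan curve with $f_W$ and $f_X$ on opposite sides, so it separates a far vertex of $W$ from a far vertex of $X$ and exhibits a $3$-cut, contradicting $4$-connectivity. The paper instead proves the statement in contrapositive form, with no contradiction argument and no separating triangle: it deletes the shared vertex $u$, observes that $G-u$ is $3$-connected, so the face of $G-u$ containing $u$ is bounded by a chordless cycle $C$ containing every vertex of $\n(u)$; the two parts of $\n(u)$ lie on $C$ separated by subpaths of $W$ and of $X$ of length at least $2$, and chordlessness of $C$ (Remark \ref{rem:3conn_face_cycle_chordless} applied to $G-u$) rules out any edge between them, so $\n(u)$ is disconnected outright. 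The two proofs are dual packagings of the same obstruction --- your bridging edge is precisely a chord of the paper's cycle $C$ --- but they spend $4$-connectivity differently: the paper invests it up front, letting the face-cycle machinery of $3$-connected plane graphs do the topology, while you invoke it only at the end (to forbid the $3$-cut) and so must carry out the Jordan-curve analysis by hand. The paper's route buys brevity and an explicitly disconnected neighbourhood, which is what makes Corollary \ref{cor:well_meet_is_min_disc_cut} immediate (your proof delivers that corollary too, since the contradiction shows $\n(v)$ itself cannot be connected); your route is more self-contained. Two small simplifications to your write-up: any vertex of $W$ outside $\{v,a,b\}$ automatically avoids the triangle, since vertices of a plane graph never lie in the interior of an edge, so counting $|W|\geq 4>|\{v,a,b\}|$ replaces the appeal to chordlessness for the far vertices; and in the shared-edge case no rotation analysis is needed at all, because $f_W$ and $f_X$ are the two faces incident with the edge $uv$, which lies on the triangle, so they are automatically on opposite sides of it.
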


\begin{proof}
	Assume to the contrary that $W\cap X \neq \emptyset$.
	Per remark \ref{rem:wells_intersect_at_k1_or_k2}, there are two cases to consider. 
	In both cases, we show that some vertex of $G$ has a disconnected neighbourhood.
	
	\textit{Case 1:} $W\cap X$ consists of two adjacent vertices $u$ and $v$ such that $uv$ lies on the boundary of both $f_W$ and $f_X$.
	Denote by $s$ and $t$ the vertices such that $s,u,v,t$ is a path in $W$, and $y,z$ the vertices such that $y,u,v,z$ is a path in $X$.
	Observe that graph $G-u$ is 3-connected. 
	Let $f$ be the face of $G-u$ containing $u$, and note that this face is bounded by a cycle $C$.
	Every vertex of $\n(u)$ lies on $C$, and the neighbours of $v$ on $C$ are $t$ and $z$ (see Figure \ref{fig:faces_meet_on_edge}). 

	By remark \ref{rem:3conn_face_cycle_chordless}, the neighbour $v$ is not adjacent to any other vertex of $\n(u)$.
	Thus $\n(u)$ is disconnected.
	
	\begin{figure}[h]
		\begin{center}
			\begin{tikzpicture}
	[scale=0.65,inner sep=0.8mm, 
	blackvertex/.style={circle,thick,draw, fill=black},
	whitevertex/.style={circle,thick,draw, fill=white},
	none/.style={},
	red/.style={line width=1.5pt}]
		\node [style=whitevertex, label={right:$u$}] (0) at (0, 1) {};
		\node [style=blackvertex, label={right:$v$}] (1) at (0, -1) {};
		\node [style=blackvertex, label={below:$s$}] (2) at (-2, 2) {};
		\node [style=blackvertex] (3) at (0, 3) {};
		\node [style=blackvertex, label={below:$y$}] (4) at (2, 2) {};
		\node [style=whitevertex, label={above:$t$}] (5) at (-2, -2) {};
		\node [style=whitevertex, label={above:$z$}] (6) at (2, -2) {};
		\node [style=none] (7) at (-2.5, 0) {$f_W$};
		\node [style=none] (8) at (2.5, 0) {$f_X$};
		\node [style=whitevertex, label={right:$u$}] (9) at (10.5, 0) {};
		\node [style=blackvertex, label={below:$s$}] (11) at (8.5, 2) {};
		\node [style=blackvertex] (12) at (10.5, 2.25) {};
		\node [style=blackvertex, label={below:$y$}] (13) at (12.5, 2) {};
		\node [style=blackvertex, label={above:$t$}] (14) at (8.5, -2) {};
		\node [style=blackvertex, label={above:$z$}] (15) at (12.5, -2) {};
		\node [style=none] (16) at (8, 0) {$f_W$};
		\node [style=none] (17) at (13, 0) {$f_X$};
		\node [style=blackvertex] (18) at (9.75, -2.5) {};
		\node [style=blackvertex] (19) at (11.25, -2.5) {};
		\node [style=none] (20) at (10.5, 3) {$P_1$};
		\node [style=none] (21) at (15.5, 0) {$Q_1$};
		\node [style=none] (22) at (10.5, -3.25) {$P_2$};
		\node [style=none] (23) at (5.5, 0) {$Q_2$};

		\draw (0) to (1);
		\draw (2) to (0);
		\draw (3) to (0);
		\draw (0) to (4);
		\draw [style=red] (1) to (5);
		\draw [style=red] (1) to (6);
		\draw [style=red, bend left] (2) to (3);
		\draw [style=red, bend left] (3) to (4);
		\draw [style=red, in=210, out=-210, looseness=2.25] (2) to (5);
		\draw [style=red, bend left=120, looseness=2.25] (4) to (6);
		\draw (11) to (9);
		\draw (12) to (9);
		\draw (9) to (13);
		\draw [style=red, bend left] (11) to (12);
		\draw [style=red, bend left] (12) to (13);
		\draw [style=red, in=210, out=-210, looseness=2.25] (11) to (14);
		\draw [style=red, bend left=120, looseness=2.25] (13) to (15);
		\draw (9) to (18);
		\draw (9) to (19);
		\draw [style=red, bend right=15] (14) to (18);
		\draw [style=red, bend right=15] (18) to (19);
		\draw [style=red, bend right=15] (19) to (15);
		\draw (9) to (14);
		\draw (9) to (15);

\end{tikzpicture}
			\caption{Left: In Case 1, two large faces share an edge $uv$. Right: In Case 2, the two large faces share only the vertex $u$. In each case, the edges of the cycle $C$, and the neighbors of $u$, are bolded.}
		\end{center}
		\label{fig:faces_meet_on_edge}
	\end{figure}
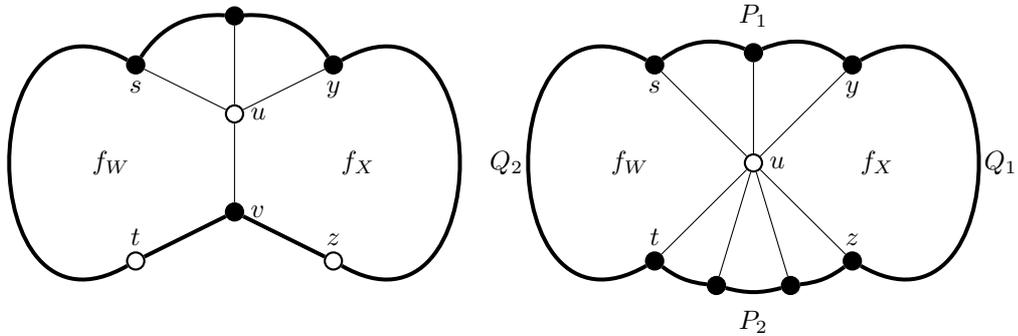
	
	\textit{Case 2:} $W\cap X$ is the single vertex $u$.
	Let $s,t$ be the vertices such that $s,u,t$ is a path in $W$ and $y,z$ vertices such that $y,u,z$ is a path in $X$.
	As in Case 1, the face of $G-u$ containing $u$ is bounded by a cycle $C$ containing every vertex of $\n(u)$.
	Up to swapping the labels of $s,t$, and swapping the labels on $y,z$, the vertices $s,y,z,t$ appear in that cyclic order on $C$.
	Let $P_1$ be the $s-y$ path in $C$, $Q_1$ the $y-z$ path, $P_2$ the $z-t$ path and $Q_2$ the $t-s$ path (see Figure \ref{fig:faces_meet_on_edge}). 
	
	Note that $Q_1\leq X$ and $Q_2\leq W$ --- and that $\n(u) \subseteq V(P_1)\cup V(P_2)$.
	To show that $\n(u)$ is disconnected, it suffices to prove that there is no edge between $P_1$ and $P_2$.
	Since $W$ and $Z$ have length at least 4, both $Q_1$ and $Q_2$ have length at least 2. 
	Thus there is no edge between $P_1$ and $P_2$ in $C$.
	Further, per remark \ref{rem:3conn_face_cycle_chordless}, there is no chord of $C$ from a vertex of $P_1$ to a vertex of $P_2$.
	Therefore $\n(u)$ is disconnected. 
\end{proof}
	
Corollary \ref{cor:well_meet_is_min_disc_cut} below follows immediately from the proof of Proposition \ref{prop:wells_dont_intersect}.
	
\begin{cor}
	If a vertex $v$ lies in two large faces of a 4-connected planar graph $G$, then $\n(v)$ is a disconnected minimal cut.
	\label{cor:well_meet_is_min_disc_cut}
\end{cor}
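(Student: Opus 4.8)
The plan is to combine Lemma \ref{lem:neighbourhood_minn_sep_4conn} with the disconnectedness argument already carried out inside the proof of Proposition \ref{prop:wells_dont_intersect}. Since $G$ is a $4$-connected planar graph, Lemma \ref{lem:neighbourhood_minn_sep_4conn} immediately gives that $\n(v)$ is a minimal cut of $G$. So the only remaining task is to verify that $\n(v)$ is \emph{disconnected}; with both facts in hand, $\n(v)$ is a disconnected minimal cut as claimed.

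To see that $\n(v)$ is disconnected, I would exploit the structure forced by $v$ lying in two large faces $W$ and $X$. Because $v\in V(W)\cap V(X)$, the two face cycles intersect, so by Remark \ref{rem:wells_intersect_at_k1_or_k2} the intersection $W\cap X$ is either a single edge lying on both $f_W$ and $f_X$, or a single vertex. In either case $v$ is a common vertex (either an endpoint of the shared edge, or the single shared vertex) at which the two large faces meet, so $v$ plays exactly the role of the vertex $u$ in the proof of Proposition \ref{prop:wells_dont_intersect}.

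I would then rerun that two-case analysis verbatim with $u$ replaced by $v$. If $W\cap X$ is an edge through $v$, then (Case 1) after deleting $v$ the face of $G-v$ containing $v$ is bounded by a chordless cycle $C$ carrying all of $\n(v)$, and the partner of $v$ on the shared edge is adjacent on $C$ only to its two flanking vertices, so $\n(v)$ splits. If $W\cap X=\{v\}$, then (Case 2) the cycle $C$ bounding the face of $G-v$ that contains $v$ decomposes into four arcs $P_1,Q_1,P_2,Q_2$ with $Q_1\leq X$, $Q_2\leq W$ and $\n(v)\subseteq V(P_1)\cup V(P_2)$; since $W$ and $X$ have length at least $4$ the separating arcs $Q_1,Q_2$ each have length at least $2$, and the chordlessness of $C$ (Remark \ref{rem:3conn_face_cycle_chordless}) forbids any edge between $P_1$ and $P_2$, so again $\n(v)$ is disconnected.

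There is essentially no obstacle here: the entire content is contained in the proof of Proposition \ref{prop:wells_dont_intersect}, and the corollary is just the observation that that argument derives disconnectedness of the shared vertex's neighbourhood directly, independently of the contradiction it was feeding into. The one point deserving a word of care is that Proposition \ref{prop:wells_dont_intersect} phrased the conclusion for a \emph{specifically labelled} shared vertex $u$; I would note explicitly that the choice of that vertex is arbitrary among the vertices of $W\cap X$, so the identical conclusion applies to the given $v$.
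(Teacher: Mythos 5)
Your proposal is correct and matches the paper's intended argument: the paper explicitly derives Corollary \ref{cor:well_meet_is_min_disc_cut} ``immediately from the proof of Proposition \ref{prop:wells_dont_intersect}'', with minimality supplied by Lemma \ref{lem:neighbourhood_minn_sep_4conn}, exactly as you do. Your closing remark about the symmetry of the shared-edge case (either endpoint of the common edge works) is a sensible clarification of a point the paper leaves implicit.
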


\begin{cor}
	If a 4-connected plane graph has two intersecting large faces, then it is not cleavable.
	\label{cor:wells_meet_disconn_sep}
\end{cor}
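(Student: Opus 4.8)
The plan is to derive this directly from Corollary~\ref{cor:well_meet_is_min_disc_cut}, since that corollary already performs all the real work by exhibiting a disconnected minimal cut. First I would unpack the hypothesis: by assumption $G$ has two large faces $W$ and $X$ with $W\cap X\neq\emptyset$. I would then invoke Remark~\ref{rem:wells_intersect_at_k1_or_k2}, which tells us that $W\cap X$ is either a single edge lying on both $f_W$ and $f_X$, or a single vertex. In either case the intersection contains at least one vertex $v$, and this $v$ lies in both large faces.

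Next I would apply Corollary~\ref{cor:well_meet_is_min_disc_cut} to the vertex $v$: since $v$ lies in two large faces of the $4$-connected plane graph $G$, its neighbourhood $\n(v)$ is a disconnected minimal cut of $G$. Finally, I would appeal to the definition of cleavable, namely that a graph is cleavable precisely when every minimal cut is connected. The cut $\n(v)$ is minimal yet disconnected, so $G$ violates this condition and is therefore not cleavable.

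The only point needing care is to confirm that the intersection of the two faces is genuinely witnessed by a \emph{vertex} of $G$, rather than merely by a shared region of the plane; but this is exactly the content of Remark~\ref{rem:wells_intersect_at_k1_or_k2}, which guarantees that two face cycles of a $3$-connected (hence $4$-connected) plane graph meet in a vertex or an edge. Once that reduction is in hand, the statement follows immediately from the preceding corollary, so I expect no substantive obstacle—the result is essentially a restatement of Corollary~\ref{cor:well_meet_is_min_disc_cut} in the language of cleavability.
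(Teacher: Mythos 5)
Your proof is correct and takes essentially the same route as the paper, which presents this corollary as an immediate consequence of Corollary~\ref{cor:well_meet_is_min_disc_cut} (itself extracted from the proof of Proposition~\ref{prop:wells_dont_intersect}): a shared vertex of the two large faces exists by Remark~\ref{rem:wells_intersect_at_k1_or_k2}, its neighbourhood is a disconnected minimal cut, and so the graph fails the definition of cleavable. There is no gap.
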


Note that corollary \ref{cor:wells_meet_disconn_sep} does not hold for 3-connected plane graphs, as demonstrated by the graph in Figure \ref{fig:3conn_cleavable_inter_wells}.

\begin{figure}[h]
	\begin{center}
		\begin{tikzpicture}
	[scale=0.6,inner sep=0.8mm, 
	blackvertex/.style={circle,thick,draw, fill=black},
	whitevertex/.style={circle,thick,draw, fill=white},
	none/.style={},
	red/.style={line width=2pt}]
		\node [style=blackvertex] (0) at (0, 0) {};
		\node [style=whitevertex] (1) at (-1, 2) {};
		\node [style=whitevertex] (2) at (1, 2) {};
		\node [style=whitevertex] (3) at (-2, 1) {};
		\node [style=whitevertex] (4) at (2, 1) {};
		\node [style=whitevertex] (5) at (-1, -2) {};
		\node [style=whitevertex] (6) at (1, -2) {};
		\node [style=whitevertex] (7) at (-2, -1) {};
		\node [style=whitevertex] (8) at (2, -1) {};

		\draw (1) to (0);
		\draw (3) to (0);
		\draw (7) to (0);
		\draw (0) to (5);
		\draw (0) to (6);
		\draw (0) to (8);
		\draw (4) to (0);
		\draw (0) to (2);
		\draw (1) to (2);
		\draw (2) to (4);
		\draw (4) to (8);
		\draw (8) to (6);
		\draw (6) to (5);
		\draw (5) to (7);
		\draw (7) to (3);
		\draw (3) to (1);
		\draw [bend right=90, looseness=1.75] (1) to (5);
		\draw [bend left=90, looseness=1.75] (2) to (6);

\end{tikzpicture}
		\caption{A 3-connected, cleavable graph in which there are intersecting large faces of length 4. Note that the universal vertex (bold) is in every cut of the graph.}
	\end{center}
	\label{fig:3conn_cleavable_inter_wells}
\end{figure}
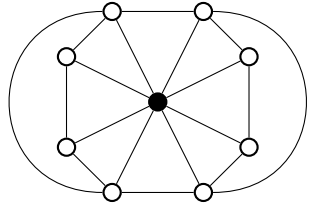

\section{Finding the disconnected cut}
\label{sec:disc_sep}

Let $G$ be a 2-connected plane graph, $G^\nabla$ its auxiliary graph, and $C$ a chordless cycle of $G^\nabla$.
For the following definitions, the cycle $W$ is a large face of $G$, and $w^\nabla$ is the vertex of $G^\nabla - G$ lying in the face $f_W$.
The following list of possibilities for the relationship between $W$ and $C$ is exhaustive, and pairwise mutually exclusive:
\begin{itemize}
	\item $W$ \textbf{touches} $C$ if $W\cap C$ is either a vertex, or a pair of vertices that are adjacent in both $W$ and $C$,
	\item $W$ is \textbf{inside} (\textbf{outside}) of $C$ if $W$ lies in $\int(C)$ ($\ext(C)$),
	\item $W$ \textbf{covers} $C$ if there exist vertices $x,y$ of $W$ such that $x, w^\nabla, y$ is a path in $C$, and $(C\cap W) - \{x,y\} = \emptyset$,
	\item $W$ \textbf{crosses} $C$ if it does not cover $C$, and there exist two vertices of $W\cap C$ that are not adjacent in $W$ (in this case, $W\cap C$ can contain many vertices).
\end{itemize}
Further, suppose $s,t$ are two non-adjacent vertices of $C$. 
Then the edges of $C$ are partitioned into two $s-t$ paths $P_1$ and $P_2$.
We say that the large face $W$ \textbf{splits} $s$ and $t$ in $C$ if $(W\cap P_1) - \{s,t\} \neq \emptyset$ and $(W\cap P_2) - \{s,t\} \neq \emptyset$.

\begin{figure}[h]
	\begin{center}
		\begin{tikzpicture}
	[scale=0.7,inner sep=0.8mm, 
	blackvertex/.style={circle,thick,draw, fill=black},
	whitevertex/.style={circle,thick,draw, fill=white},
	greyvertex/.style={circle,thick,draw, fill=black!30},
	lightvertex/.style={star,draw, star points = 5},
	none/.style={},
	red/.style={line width=2pt},
	lightedge/.style={line width=1pt, color=black!50, dotted}]
		\node [style=greyvertex] (1) at (-4.5, 1.5) {};
		\node [style=greyvertex] (2) at (-3.25, 2.5) {};
		\node [style=greyvertex] (3) at (-2, 3) {};
		\node [style=greyvertex] (4) at (-0.5, 3.25) {};
		\node [style=greyvertex] (5) at (1, 3.25) {};
		\node [style=greyvertex] (6) at (2.75, 3) {};
		\node [style=greyvertex] (7) at (3.75, 2.25) {};
		\node [style=greyvertex] (8) at (4.5, 1.25) {};
		\node [style=greyvertex, label={left:$s$}] (10) at (-5, 0) {};
		\node [style=greyvertex] (11) at (-4.5, -1.5) {};
		\node [style=greyvertex] (12) at (-3.25, -2.5) {};
		\node [style=greyvertex] (13) at (-2, -3) {};
		\node [style=greyvertex] (14) at (-0.5, -3.25) {};
		\node [style=greyvertex] (15) at (1, -3.25) {};
		\node [style=greyvertex] (16) at (2.5, -3) {};
		\node [style=greyvertex, shape=star, star points = 5] (17) at (3.75, -2.25) {};
		\node [style=greyvertex] (18) at (4.5, -1.25) {};
		\node [style=greyvertex, label={right:$t$}] (19) at (5, 0) {};
		\node [style=whitevertex] (20) at (-3, 1.5) {};
		\node [style=whitevertex] (21) at (-1, 2) {};
		\node [style=whitevertex] (22) at (-3.5, 0.25) {};
		\node [style=whitevertex] (23) at (-1, 0.25) {};
		\node [style=whitevertex] (24) at (-1.5, -1) {};
		\node [style=whitevertex] (25) at (-3.5, -1) {};
		\node [style=lightvertex] (26) at (-2.25, 0.25) {};
		\node [style=whitevertex] (27) at (0.75, 2.25) {};
		\node [style=whitevertex] (28) at (0.75, 1) {};
		\node [style=whitevertex] (29) at (2.25, 0) {};
		\node [style=whitevertex] (30) at (3.75, 0.5) {};
		\node [style=whitevertex] (31) at (2.5, 2.25) {};
		\node [style=lightvertex] (32) at (2.25, 1.25) {};
		\node [style=whitevertex] (36) at (-1, -1.75) {};
		\node [style=whitevertex] (37) at (1, -1.75) {};
		\node [style=whitevertex] (38) at (0.25, -0.75) {};
		\node [style=lightvertex] (39) at (0, -1.75) {};
		\node [style=whitevertex] (40) at (2.75, -1.5) {};
		\node [style=whitevertex] (41) at (3.75, -3.5) {};
		\node [style=whitevertex] (42) at (5, -2.5) {};

		\draw [style=red] (3) to (20);
		\draw [style=red] (20) to (22);
		\draw [style=red] (22) to (25);
		\draw [style=red] (25) to (12);
		\draw [style=red] (12) to (24);
		\draw [style=red] (24) to (23);
		\draw [style=red] (23) to (21);
		\draw [style=red] (21) to (3);
		\draw [style=lightedge] (20) to (26);
		\draw [style=lightedge] (3) to (26);
		\draw [style=lightedge] (26) to (21);
		\draw [style=lightedge] (26) to (23);
		\draw [style=lightedge] (26) to (24);
		\draw [style=lightedge] (26) to (12);
		\draw [style=lightedge] (22) to (26);
		\draw [style=lightedge] (25) to (26);
		\draw (10) to (1);
		\draw (1) to (2);
		\draw (2) to (3);
		\draw (2) to (20);
		\draw (1) to (20);
		\draw (1) to (22);
		\draw (22) to (10);
		\draw (22) to (11);
		\draw (25) to (11);
		\draw (10) to (11);
		\draw (11) to (12);
		\draw (12) to (13);
		\draw (24) to (13);
		\draw (4) to (21);
		\draw (3) to (4);
		\draw [style=red] (5) to (27);
		\draw [style=red] (27) to (28);
		\draw [style=red] (28) to (29);
		\draw [style=red] (29) to (30);
		\draw [style=red] (30) to (8);
		\draw [style=red] (7) to (8);
		\draw [style=red] (31) to (7);
		\draw [style=red] (5) to (31);
		\draw (5) to (6);
		\draw (6) to (31);
		\draw (6) to (7);
		\draw [style=lightedge] (5) to (32);
		\draw [style=lightedge] (27) to (32);
		\draw [style=lightedge] (31) to (32);
		\draw [style=lightedge] (32) to (7);
		\draw [style=lightedge] (32) to (8);
		\draw [style=lightedge] (32) to (30);
		\draw [style=lightedge] (32) to (29);
		\draw [style=lightedge] (32) to (28);
		\draw (15) to (16);
		\draw [style=lightedge] (16) to (17);
		\draw [style=red] (36) to (14);
		\draw [style=red] (14) to (37);
		\draw [style=red] (38) to (37);
		\draw [style=red] (38) to (36);
		\draw [style=lightedge] (36) to (39);
		\draw [style=lightedge] (38) to (39);
		\draw [style=lightedge] (39) to (37);
		\draw [style=lightedge] (39) to (14);
		\draw (8) to (19);
		\draw (19) to (18);
		\draw [style=lightedge] (18) to (17);
		\draw [style=red] (16) to (40);
		\draw [style=red] (40) to (18);
		\draw [style=red] (18) to (42);
		\draw [style=red] (42) to (41);
		\draw [style=red] (16) to (41);
		\draw [style=lightedge] (40) to (17);
		\draw [style=lightedge] (17) to (41);
		\draw [style=lightedge] (17) to (42);
		\draw (13) to (14);
		\draw (14) to (15);
		\draw (36) to (13);
		\draw (24) to (36);
		\draw (24) to (38);
		\draw (23) to (38);
		\draw (4) to (5);
		\draw (30) to (19);
		\draw (30) to (18);
		\draw (30) to (40);
		\draw (29) to (40);
		\draw (40) to (37);
		\draw (37) to (15);
		\draw (37) to (16);
		\draw (29) to (37);
		\draw (38) to (29);
		\draw (28) to (38);
		\draw (23) to (28);
		\draw (21) to (28);
		\draw (21) to (27);
		\draw (4) to (27);

\end{tikzpicture}
		\caption{A cycle $C$ (bold outline grey vertices) with four large faces. From left to right: a large face splitting $s$ and $t$, a large face touching $C$, a large face crossing $C$ and a large face covering $C$.}
	\end{center}
	\label{fig:cross_split_touch}
\end{figure}
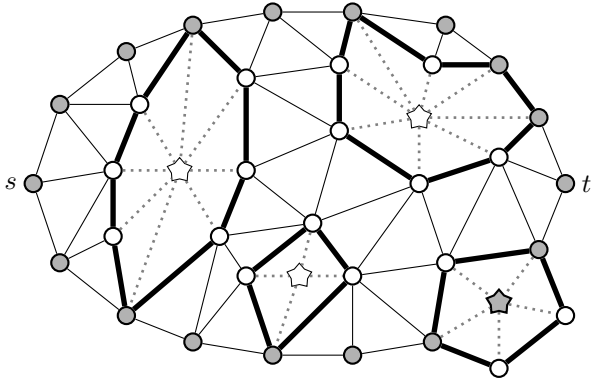

\begin{thm}
	Let $G$ be a 2-connected plane graph, $G^\nabla$ its auxiliary graph, and $C$ a chordless Jordan-separating cycle of $G^\nabla$.
	If no large face of $G$ crosses $C$, then $V(C)\cap V(G)$ is a minimal cut of $G$.
	\label{thm:auxillary_min_sep_restricts}
\end{thm}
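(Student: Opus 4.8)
The plan is to use the fact that $G^\nabla$ is a maximal planar graph and to transfer the minimal-cut structure of $C$ in $G^\nabla$ down to $G$. Since $C$ is a chordless Jordan-separating cycle of the triangulation $G^\nabla$, Proposition~\ref{prop:chordless_cycle_is_min_sep} gives that $V(C)$ is a minimal cut of $G^\nabla$, and by Proposition~\ref{prop:min_sep_3conn_2_components} the graph $G^\nabla - V(C)$ has exactly two components: the vertices $A^\nabla$ lying in $\int(C)$ and the vertices $B^\nabla$ lying in $\ext(C)$. Write $S = V(C)\cap V(G)$. A first observation, using chordlessness, is that an auxiliary vertex $v_W$ lies on $C$ if and only if $W$ covers $C$: if $v_W\in C$ then its two $C$-neighbours $x,y$ lie on $W$ and cannot be joined by a chord, and any further vertex of $W\cap C$ would produce a chord at $v_W$; conversely covering puts $v_W$ on $C$ by definition. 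Consequently, deleting the auxiliary vertices from $C$ leaves a disjoint union of arcs of $G$ (the whole cycle if $C$ meets no auxiliary vertex), and since $C$ is chordless these arcs are exactly the components of $G[S]$.

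Next I would establish that, for vertices of $G$, connectivity in $G-S$ is equivalent to connectivity in $G^\nabla - V(C)$. One direction is immediate, since $G-S$ is a subgraph of $G^\nabla - V(C)$. For the other, take a path in $G^\nabla - V(C)$ joining two vertices of $G$; each auxiliary vertex $v_W$ it uses is flanked by two vertices $p,q\in W$ that avoid $S$, and $W$ does not cover $C$, so by hypothesis $W$ touches, lies inside, or lies outside $C$. In every case $W\cap S = W\cap C$ is empty or a single contiguous vertex/edge disjoint from $\{p,q\}$, so one of the two $p$--$q$ arcs of the face cycle $W$ avoids $S$; replacing each $v_W$ by such an arc turns the path into a walk in $G-S$. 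Since $C$ is Jordan-separating, both $A^\nabla$ and $B^\nabla$ are nonempty, and each contains a vertex of $G$ (an interior-only auxiliary vertex $v_W$ forces at least two vertices of $W$ into the same region). Hence $G-S$ has exactly the two components $A = A^\nabla\cap V(G)$ and $B = B^\nabla\cap V(G)$, so $S$ is a cut.

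For minimality I would invoke Proposition~\ref{prop:min_sep_is_adjacent} and show every $w\in S$ has a $G$-neighbour in each of $A$ and $B$. Since $V(C)$ is a minimal cut of $G^\nabla$, the vertex $w$ has neighbours in both $\int(C)$ and $\ext(C)$; in the cyclic order of the link of $w$ in the triangulation $G^\nabla$, the cycle $C$ enters and leaves $w$ at its two $C$-neighbours, splitting the remaining neighbours into a nonempty interior arc and a nonempty exterior arc, and chordlessness forces every arc-neighbour to be strictly interior/exterior. Because distinct auxiliary vertices are never adjacent, and each is separated from the next by a genuine edge of $G$ at $w$, an arc consisting solely of auxiliary vertices must be a single auxiliary vertex $v_W$; but then $w$ together with its two $C$-neighbours would be three consecutive vertices of $W$ lying on $C$, exhibiting two non-adjacent vertices of $W\cap C$ and forcing $W$ to cross $C$ --- contradicting the hypothesis (cf.\ Remark~\ref{rem:3conn_face_cycle_chordless}). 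Thus each arc contains a $G$-vertex, so $w$ is adjacent in $G$ to both $A$ and $B$, and $S$ is a minimal cut.

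The main obstacle is this last transfer of adjacency: the hypothesis that no large face crosses $C$ is precisely what prevents all of $w$'s neighbours on one side of $C$ from being auxiliary vertices, which is the only mechanism by which minimality could fail when passing from $G^\nabla$ back to $G$. The same condition reappears, more benignly, in the connectivity transfer, where it guarantees an $S$-avoiding arc around each large face. Ruling out the crossing configuration at each stage is the crux of the argument.
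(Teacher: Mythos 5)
Your proof is correct and follows essentially the same route as the paper's: both reduce to the fact that $V(C)$ is a minimal cut of the triangulation $G^\nabla$ with components $\int(C)$ and $\ext(C)$ (Propositions \ref{prop:chordless_cycle_is_min_sep} and \ref{prop:min_sep_3conn_2_components}), both transfer connectivity down to $G-S$ by rerouting paths around auxiliary vertices along a face arc avoiding $C$ (possible precisely because a non-covering, non-crossing large face meets $C$ in at most a vertex or an edge), and both finish via Proposition \ref{prop:min_sep_is_adjacent} by showing each vertex of $S$ has $G$-neighbours strictly inside and strictly outside $C$. The only difference is cosmetic: in that last step you argue on the link of $w$ in the triangulation and use the no-crossing hypothesis to rule out an all-auxiliary arc, whereas the paper splits into cases according to whether the $C$-neighbours of the vertex are auxiliary --- the same local argument in substance.
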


\begin{proof}
	Let $S = V(C)\cap V(G)$. 
	By Proposition \ref{prop:min_sep_is_adjacent}, it suffices to prove both of the following:
	\begin{itemize}
		\item[\textbf{(1)}] both $\int(C)\cap G$ and $\ext(C)\cap G$ are connected, and
		\item[\textbf{(2)}] Every vertex of $S$ has a neighbour in $\int(C)\cap G$, and a neighbour in $\ext(C)\cap G$.
	\end{itemize} 

	\textit{Claim 1:} The subgraphs $\int(C)\cap G$ and $\ext(C)\cap G$ are connected.\\
	The set $V(C)$ is a minimal cut of $G^\nabla$ by Proposition \ref{prop:chordless_cycle_is_min_sep}.
	Thus $\int(C)$ and $\ext(C)$ are the connected components of $G^\nabla-C$ (note that $G^\nabla$ is 3-connected and use Proposition \ref{prop:min_sep_3conn_2_components}).
	In particular, $\int(C)$ is connected in $G^\nabla$.
	
	Let $u,v$ be two vertices of $\int(C)\cap G$.
	Since $\int(C)$ is connected in $G^\nabla$, there exists a $u-v$ path in $\int(C)$. 
	We prove that there must exist such a path containing only vertices of $G$.
	Assume to the contrary that every $u-v$ path in the interior of $C$ contains a vertex of $G^\nabla-G$.
	Among all such paths, let $P$ be a path containing the fewest vertices of $G^\nabla - G$.
	Let $w^\nabla$ be a vertex of $P$ in $G^\nabla-G$, and let $W$ denote the large face of $G$ that $w^\nabla$ lies inside.
	There exist vertices $x,y$ of $W$ such that $x,w^\nabla,y$ is a path in $P$.
	The large face $W$ is divided into two internally disjoint $x-y$ paths --- call them $Q_1$ and $Q_2$.
	By assumption, the large face $W$ does not cross $C$.
	Since $w^\nabla$ is in $\int(C)$, $W$ does not cover $C$.
	Thus $W$ is either inside $C$, or touches $C$.
	In either case, at least one of the paths $Q_i$ does not intersect $C$. 
	Thus we create a new $u-v$ path $P' = P[u,x]\cup Q_i \cup P[y,v]$.
	Note that $P'$ is contained in $\int(C)$, and contains one less vertex of $G^\nabla-G$ than $P$ (see Figure \ref{fig:path_detour}).
	This contradicts the minimality of $P$, proving that $\int(C)\cap G$ is connected.
	The proof that $\ext(C)\cap G$ is connected is similar.
	
	\begin{figure}[h]
		\begin{center}
			\begin{tikzpicture}
	[scale=0.7,inner sep=0.8mm, 
	blackvertex/.style={circle,thick,draw, fill=black},
	whitevertex/.style={circle,thick,draw, fill=white},
	greyvertex/.style={circle,thick,draw, fill=black!30},
	star/.style={shape=star, star points = 5, thick, draw, fill=white},
	none/.style={},
	red/.style={line width=2pt},
	dashed_thick/.style={line width=2pt, dotted},
	lightedge/.style={line width=1pt, color=black!50, dotted}]
	
	\pgfdeclarelayer{bg} 
	\pgfdeclarelayer{nodelayer} 
	\pgfdeclarelayer{edgelayer}  
	\pgfsetlayers{bg, edgelayer, nodelayer, main} 
	
	\begin{pgfonlayer}{nodelayer}
		\node [style=greyvertex] (1) at (-4, 3) {};
		\node [style=greyvertex] (2) at (-2.5, 3.5) {};
		\node [style=greyvertex] (3) at (-1, 3.75) {};
		\node [style=greyvertex] (4) at (0.75, 3.75) {};
		\node [style=greyvertex] (5) at (2.25, 3.5) {};
		\node [style=greyvertex] (6) at (3.5, 2.75) {};
		\node [style=whitevertex] (7) at (-1.75, 2) {};
		\node [style=whitevertex] (9) at (-1, 0.5) {};
		\node [style=whitevertex] (10) at (1, 0.5) {};
		\node [style=whitevertex] (11) at (1.5, 2) {};
		\node [style=star] (12) at (0, 2.25) {};
		\node [style=whitevertex] (16) at (3.25, 1) {};
		\node [style=whitevertex] (17) at (-3.5, 1) {};
		\node [style=greyvertex] (18) at (6.25, 3) {};
		\node [style=greyvertex] (19) at (7.75, 3.5) {};
		\node [style=greyvertex] (20) at (9.25, 3.75) {};
		\node [style=greyvertex] (21) at (11, 3.75) {};
		\node [style=greyvertex] (22) at (12.5, 3.5) {};
		\node [style=greyvertex] (23) at (13.75, 2.75) {};
		\node [style=whitevertex] (24) at (8.5, 2) {};
		\node [style=whitevertex] (25) at (9.25, 0.5) {};
		\node [style=whitevertex] (26) at (11.25, 0.5) {};
		\node [style=whitevertex] (27) at (11.75, 2) {};
		\node [style=star] (28) at (10.25, 2.25) {};
		\node [style=whitevertex] (29) at (13.5, 1) {};
		\node [style=whitevertex] (30) at (6.75, 1) {};
		\node [style=none] (31) at (5, 1.5) {\Huge $\mapsto$};
		\node [style=none] (32) at (-2.75, 0.75) {$P$};
		\node [style=none] (33) at (7.75, 0.75) {$P'$};
	\end{pgfonlayer}
	\begin{pgfonlayer}{edgelayer}
		\draw (1) to (2);
		\draw (2) to (3);
		\draw (3) to (4);
		\draw (4) to (5);
		\draw (5) to (6);
		\draw [style=lightedge] (3) to (12);
		\draw [style=lightedge] (4) to (12);
		\draw [style=lightedge] (12) to (10);
		\draw [style=lightedge] (12) to (9);
		\draw (3) to (7);
		\draw (7) to (9);
		\draw (9) to (10);
		\draw (10) to (11);
		\draw (11) to (4);
		\draw [style={dashed_thick}, bend left=15, looseness=0.75] (7) to (12);
		\draw [style={dashed_thick}, bend left=15, looseness=0.75] (12) to (11);
		\draw [style=red, bend left=15, looseness=0.75] (17) to (7);
		\draw [style=red, in=135, out=-15, looseness=0.75] (11) to (16);
		\draw (18) to (19);
		\draw (19) to (20);
		\draw (20) to (21);
		\draw (21) to (22);
		\draw (22) to (23);
		\draw [style=lightedge] (20) to (28);
		\draw [style=lightedge] (21) to (28);
		\draw [style=lightedge] (28) to (26);
		\draw [style=lightedge] (28) to (25);
		\draw (20) to (24);
		\draw [style=red] (24) to (25);
		\draw [style=red] (25) to (26);
		\draw [style=red] (26) to (27);
		\draw (27) to (21);
		\draw [style=lightedge, bend left=15, looseness=0.75] (24) to (28);
		\draw [style=lightedge, bend left=15, looseness=0.75] (28) to (27);
		\draw [style=red, bend left=15, looseness=0.75] (30) to (24);
		\draw [style=red, in=135, out=-15, looseness=0.75] (27) to (29);
	\end{pgfonlayer}
\end{tikzpicture}
			\caption{Modifying a path in $\int(C)$ to avoid the starred vertex of $G^\nabla-G$. The grey vertices lie on the cycle $C$.}
		\end{center}
		\label{fig:path_detour}
	\end{figure}
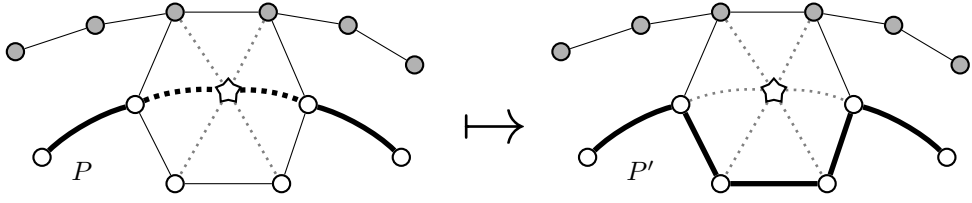
	
	\textit{Claim 2:} Every vertex of $S$ has a neighbour in $\int(C)\cap G$, and a neighbour in $\ext(C)\cap G$.\\
	Let $u$ be a vertex of $S$, and let $x$ and $y$ be the neighbours of $u$ in $C$. 
	Since $C$ is chordless, $x$ and $y$ are not adjacent.
	There are two cases.
	We show in both cases that $u$ has a neighbour in $\int(C)$ --- that $u$ has a neighbour in $\ext(C)$ follows similarly.
	
	\textit{Case 1:} Either $x$ or $y$ is a vertex of $G^\nabla-G$. 
	Assume without loss of generality that $x$ is not in $G$. 
	Then there exists a large face $W$ of $G$ covering $C$ such that $u$ is a vertex of $W$, and $x$ lies inside the large face.
	Since $C$ is chordless, $u$ has two neighbours $s,t$ in $W-C$.
	One of $s,t$ is in $\int(C)$, and the other in $\ext(C)$, so $u$ has a neighbour in $\int(C)$.
	
	\textit{Case 2:} Both $x$ and $y$ are in $G$.
	Let $f$ be the face of $G$ that is both incident with the edge $xu$ and lies in the interior of $C$.
	Let $W$ be the cycle bounding $f$, and note that there exists a vertex $s$ such that $x,u,s$ is a path in $W$.
	Since $C$ is chordless, and no large face crosses $C$, $s$ is not in $V(C)$.
	Thus $s\in \int(C)$, completing the proof.
\end{proof}

Let $G$ be a 2-connected plane graph, $G^\nabla$ it's auxiliary graph, and $C$ a chordless cycle of $G^\nabla$. 
Then $\bm{\zeta(C)}$ denotes the number of large faces of $G$ crossing $C$.

\begin{thm}[Crossing Removal Algorithm]
	Let $G$ be a 2-connected plane graph in which no two large faces intersect. Suppose it has auxiliary graph $G^\nabla$, and that $C$ a chordless cycle of $G^\nabla$ such that $\zeta(C) \geq 1$.
	Let $s,t$ be two non-adjacent vertices of $C$.
	If no large face of $G$ splits $s$ and $t$ in $C$, then there exists a chordless cycle $D$ of $G^\nabla$ with all of the following properties:
	\begin{itemize}
		\item[\textbf{(1)}] both $s$ and $t$ are vertices of $D$,
		\item[\textbf{(2)}] no large face of $G$ splits $s$ and $t$ in $D$,
		\item[\textbf{(3)}] $\zeta(D) < \zeta(C)$,
		\item[\textbf{(4)}] $D\cap G \leq C\cap G$,
		\item[\textbf{(5)}] Suppose $P_1$ and $P_2$ are two chordless $s-t$ paths such that $P_i - \{s,t\} \leq G$ ($i=1,2$). 
		If $P_1$ and $P_2$ are in different regions of $G^\nabla - C$, then they are in different regions of $G^\nabla - D$.
	\end{itemize}
	\label{thm:crossing_removal_algorithm}
\end{thm}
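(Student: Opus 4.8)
The plan is to remove a single crossing by rerouting $C$ through the apex $w^\nabla$ of one crossing large face. Fix a large face $W$ that crosses $C$, and let $w^\nabla$ be its apex in $G^\nabla$. Since $w^\nabla \notin C$ (as $W$ does not cover $C$), the cycle $C$ uses no edge interior to $f_W$, so $f_W$ — and hence $w^\nabla$ — lies entirely in $\int(C)$ or entirely in $\ext(C)$; assume the former. Because $W$ does not split $s$ and $t$, all vertices of $(W\cap C)\setminus\{s,t\}$ lie on a single one of the two $s$–$t$ paths of $C$, say $P_1$. Let $a$ and $b$ be the first and last vertices of $W\cap C$ met along $P_1$, and set
\[
D \;=\; P_2 \;\cup\; P_1[s,a] \;\cup\; (a, w^\nabla, b) \;\cup\; P_1[b,t],
\]
that is, $C$ with the subpath $P_1[a,b]$ replaced by the detour $a, w^\nabla, b$. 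This $D$ is a cycle of $G^\nabla$ through $s$ and $t$, and $D\cap G$ is obtained from $C\cap G$ only by deleting vertices, giving property \textbf{(4)} at once.

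First I would verify that $D$ is chordless. The apex $w^\nabla$ is adjacent in $G^\nabla$ only to vertices of $W$, and by the choice of $a,b$ the only vertices of $W$ remaining on $D$ are $a$ and $b$, which are precisely its two neighbours on $D$; so $w^\nabla$ contributes no chord. Any other chord of $D$ would be an edge of $G^\nabla$ joining two non-consecutive vertices both lying on $C$, hence a chord of the chordless cycle $C$ — impossible — unless it is the edge $ab$ itself. Ruling out this last case (that $ab\in E(G)$ with $a,b$ consecutive on $C$) is the delicate point: I would argue that for a face that \emph{genuinely} crosses $C$ the anchors $a$ and $b$ are non-adjacent on $C$, so no such edge exists, treating the degenerate adjacent configuration separately by showing it does not constitute a crossing in the relevant sense.

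Next I would establish property \textbf{(3)}, that $\zeta(D)<\zeta(C)$. The face $W$ no longer crosses $D$: its apex now lies on $D$ with $a, w^\nabla, b$ a subpath and $D\cap W=\{a,b\}$, so $W$ covers $D$. The crux is that no large face which failed to cross $C$ begins to cross $D$. Replacing $P_1[a,b]$ by the detour only interchanges inside and outside for the vertices of the bounded region $R$ enclosed between $P_1[a,b]$ and $a, w^\nabla, b$; since the detour runs through $f_W$ and $R\subseteq \int[C]$, I would use the hypothesis that no two large faces intersect — so $f_W$ meets no other large face — to show that $R$ carries no large face across its boundary, whence no face acquires a crossing from the swap. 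This region-control step, leaning on the disjointness of the large faces, is where I expect the main difficulty to lie.

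Finally, properties \textbf{(1)}, \textbf{(2)} and \textbf{(5)} should follow from the locality of the modification. Since $P_2$ is untouched and the detour keeps $s,t$ as the shared endpoints of the two $s$–$t$ paths of $D$, the vertices $s,t$ remain unsplit: a large face with vertices on both $s$–$t$ paths of $D$ would, after undoing the detour, have vertices on both $s$–$t$ paths of $C$, contradicting the hypothesis (again using that the swapped region $R$ is confined to one side of $C$). For property \textbf{(5)}, if two chordless $s$–$t$ paths with interiors in $G$ are separated by $C$, then because $D\cap G\subseteq C\cap G$ and the detour stays within $\overline{f_W}$, those same two paths remain on opposite sides of $D$; this is routine bookkeeping once the region $R$ is understood.
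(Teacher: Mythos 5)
Your construction of $D$ is the same as the paper's (reroute $C$ through the apex $w^\nabla$, cutting between the first and last vertices of $W$ on the arc containing $W\cap C$), and your handling of properties \textbf{(1)}, \textbf{(2)} and \textbf{(4)} matches the paper. The genuine gap is property \textbf{(5)}, which you dismiss as ``routine bookkeeping.'' It is in fact the part on which the paper spends most of its proof, and your stated justification --- that $D\cap G\subseteq C\cap G$ and the detour stays within the closure of $f_W$, so paths separated by $C$ remain separated by $D$ --- is a non sequitur. The swapped region $R$ between $Q_1[x,y]$ and the detour can contain the \emph{interior of one of the test paths}: nothing in the hypotheses prevents a test path from using vertices of $W$ itself. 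Concretely, the paper explicitly allows $x=s$ and $y=t$; in that situation the arc of $W$ facing $Q_1$ is an $s$--$t$ path which (when $W\cap C=\{s,t\}$ and the arc is chordless) is a legitimate test path lying on the opposite side of $C$ from a test path inside $C$ --- and after the rerouting both lie on the \emph{same} side of $D$, since that arc sits inside $R$. So separation is not automatically preserved by the locality of the modification; ruling this out is exactly where the work is. The paper's proof of \textbf{(5)} is correspondingly substantial: it uses that $C$, $D$ and $C'=P_1\cup P_2$ are minimal cuts of the triangulation $G^\nabla$ (so each has two connected, mutually adjacent sides), builds connecting paths $R_1,R_2,R_3$ inside the appropriate regions, and derives a $K_5$ minor on $\{s,t,u,v,z\}$, contradicting planarity. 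Nothing playing this role appears in your proposal. Relatedly, you locate the ``main difficulty'' at \textbf{(3)}, but \textbf{(3)} follows quickly from $D\cap G\le C\cap G$, the mutual exclusivity of \emph{covers} and \emph{crosses}, and the hypothesis that large faces are pairwise disjoint; \textbf{(5)} is the crux. (Your reuse of the names $P_1,P_2$ both for the arcs of $C$ and for the test paths of \textbf{(5)} may have contributed to conflating these and underestimating \textbf{(5)}.)

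A secondary point: you are right to flag the possible chord $ab$ of $D$ when $a,b$ are consecutive on $C$ --- the paper silently asserts that ``by the choice of $x$ and $y$'' the rerouted path is chordless, ignoring exactly this case. However, your proposed resolution, namely that such a configuration ``does not constitute a crossing in the relevant sense,'' contradicts the paper's definition: if $a,b\in W\cap C$ are adjacent in $C$ but not in $W$, then $ab$ is a chord of the face cycle $W$ (possible in a merely 2-connected plane graph, and excluded only when $G$ is 3-connected), $W$ crosses $C$ by definition, and the rerouted cycle genuinely acquires the chord $ab$. This case cannot be defined away; it has to be excluded by a connectivity hypothesis that makes face cycles chordless, or handled by a different construction.
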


\begin{proof}
	Let $W$ be a large face crossing $C$, and let $w^\nabla$ be the vertex of $G^\nabla-G$ inside $W$.
	The vertices $s$ and $t$ divide $C$ into two internally disjoint $s-t$ paths $Q_1$ and $Q_2$.
	Since $W$ does not split $s$ and $t$, we assume without loss of generality that $W\cap C \leq Q_1$.
	Let $x$ and $y$ denote the first and last vertices, respectively, of $Q_1$ that are in $V(W)$ --- and note that it is possible $x=s$ and / or $y=t$.
	For a diagram of this setup, see Figure \ref{fig:main_theorem_setup}.
	Let $Q_1'$ be the path of $G^\nabla$ formed from $Q_1$ by replacing the segment $Q[x,y]$ with the path $x,w^\nabla,y$.
	Then set the cycle $D = Q_1'\cup Q_2$.

	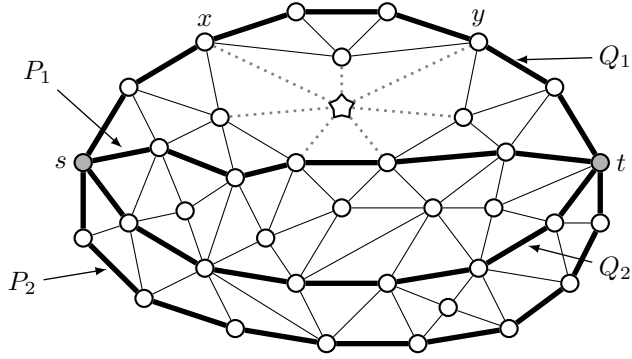
\begin{figure}[h]
		\begin{center}
			\begin{tikzpicture}
	[scale=0.8,inner sep=0.8mm, 
	blackvertex/.style={circle,thick,draw, fill=black},
	whitevertex/.style={circle,thick,draw, fill=white},
	greyvertex/.style={circle,thick,draw, fill=black!30},
	lightvertex/.style={circle,draw, fill=black!15},
	none/.style={},
	red/.style={line width=2pt},
	lightedge/.style={line width=1pt, color=black!50, dotted},
	star/.style={shape=star, star points = 5, thick, draw, fill=white},
	arrow/.style={->, line width = 0.5pt}]
	
	\pgfdeclarelayer{bg}    
	\pgfsetlayers{bg,main}

		\node [style=whitevertex] (1) at (-3.25, 1.25) {};
		\node [style=whitevertex, label={above:$x$}] (2) at (-2, 2) {};
		\node [style=whitevertex] (3) at (-0.5, 2.5) {};
		\node [style=whitevertex] (4) at (1, 2.5) {};
		\node [style=whitevertex, label={above:$y$}] (5) at (2.5, 2) {};
		\node [style=whitevertex] (6) at (3.75, 1.25) {};
		\node [style=greyvertex, label={right:$t$}] (8) at (4.5, 0) {};
		\node [style=whitevertex] (9) at (3.75, -1) {};
		\node [style=whitevertex] (10) at (2.5, -1.75) {};
		\node [style=whitevertex] (11) at (1, -2) {};
		\node [style=whitevertex] (12) at (-0.5, -2) {};
		\node [style=whitevertex] (13) at (-2, -1.75) {};
		\node [style=whitevertex] (14) at (-3.25, -1) {};
		\node [style=greyvertex, label={left:$s$}] (15) at (-4, 0) {};
		\node [style=whitevertex] (17) at (-1.75, 0.75) {};
		\node [style=whitevertex] (18) at (-0.5, 0) {};
		\node [style=whitevertex] (19) at (2.25, 0.75) {};
		\node [style=whitevertex] (20) at (0.25, 1.75) {};
		\node [style=star] (21) at (0.25, 0.925) {};
		\node [style=whitevertex] (22) at (1, 0) {};
		\node [style=whitevertex] (23) at (-2.75, 0.25) {};
		\node [style=whitevertex] (24) at (-1.5, -0.25) {};
		\node [style=whitevertex] (25) at (0.25, -0.75) {};
		\node [style=whitevertex] (26) at (1.75, -0.75) {};
		\node [style=whitevertex] (27) at (2.95, 0.175) {};
		\node [style=whitevertex] (28) at (-1, -1.25) {};
		\node [style=whitevertex] (29) at (-2.325, -0.8) {};
		\node [style=whitevertex] (30) at (2.75, -0.75) {};
		\node [style=whitevertex] (31) at (-4, -1.25) {};
		\node [style=whitevertex] (32) at (-3, -2.25) {};
		\node [style=whitevertex] (34) at (-1.5, -2.75) {};
		\node [style=whitevertex] (36) at (0, -3) {};
		\node [style=whitevertex] (37) at (1.5, -3) {};
		\node [style=whitevertex] (38) at (3, -2.75) {};
		\node [style=whitevertex] (39) at (4, -2) {};
		\node [style=whitevertex] (40) at (4.5, -1) {};
		\node [style=whitevertex] (41) at (2, -2.4) {};
		\node [style=none] (42) at (-4.75, 1.5) {$P_1$};
		\node [style=none] (43) at (-3.25, 0.15) {};
		\node [style=none] (44) at (3, 1.7) {};
		\node [style=none] (45) at (4.75, 1.75) {$Q_1$};
		\node [style=none] (46) at (-5, -2) {$P_2$};
		\node [style=none] (47) at (-3.5, -1.75) {};
		\node [style=none] (48) at (3.2, -1.35) {};
		\node [style=none] (49) at (4.75, -1.75) {$Q_2$};
	
\draw (3) to (20);
\draw (20) to (4);
\draw [style=red] (15) to (1);
\draw [style=red] (1) to (2);
\draw [style=red] (2) to (3);
\draw [style=red] (3) to (4);
\draw [style=red] (4) to (5);
\draw [style=red] (5) to (6);
\draw [style=red] (6) to (8);
\draw [style=red] (15) to (14);
\draw [style=red] (14) to (13);
\draw [style=red] (13) to (12);
\draw [style=red] (12) to (11);
\draw [style=red] (11) to (10);
\draw [style=red] (10) to (9);
\draw [style=red] (9) to (8);
\draw [style=lightedge] (2) to (21);
\draw [style=lightedge] (20) to (21);
\draw [style=lightedge] (21) to (5);
\draw [style=lightedge] (21) to (19);
\draw [style=lightedge] (21) to (22);
\draw [style=lightedge] (21) to (18);
\draw [style=lightedge] (17) to (21);
\draw [style=red] (15) to (23);
\draw [style=red] (23) to (24);
\draw [style=red] (24) to (18);
\draw (22) to (26);
\draw (26) to (27);
\draw [style=red] (27) to (8);
\draw (17) to (24);
\draw (17) to (23);
\draw (1) to (23);
\draw (1) to (17);
\draw [style=red] (22) to (27);
\draw (19) to (27);
\draw (19) to (6);
\draw (6) to (27);
\draw (23) to (29);
\draw (23) to (14);
\draw (14) to (29);
\draw (29) to (13);
\draw (13) to (24);
\draw (24) to (29);
\draw (24) to (28);
\draw (28) to (18);
\draw (18) to (25);
\draw (25) to (22);
\draw (25) to (28);
\draw (28) to (12);
\draw (28) to (13);
\draw (12) to (25);
\draw (25) to (26);
\draw (26) to (11);
\draw (12) to (26);
\draw (26) to (10);
\draw (10) to (30);
\draw (30) to (27);
\draw (30) to (9);
\draw (30) to (8);
\draw (26) to (30);
\draw [style=red] (18) to (22);
\draw (2) to (17);
\draw (17) to (18);
\draw (22) to (19);
\draw (19) to (5);
\draw (5) to (20);
\draw (20) to (2);
\draw [style=red] (15) to (31);
\draw [style=red] (31) to (32);
\draw [style=red] (32) to (34);
\draw [style=red] (34) to (36);
\draw [style=red] (36) to (37);
\draw [style=red] (37) to (38);
\draw [style=red] (38) to (39);
\draw [style=red] (39) to (40);
\draw [style=red] (40) to (8);
\draw (31) to (14);
\draw (14) to (32);
\draw (32) to (13);
\draw (13) to (34);
\draw (13) to (36);
\draw (12) to (36);
\draw (36) to (11);
\draw (11) to (37);
\draw (11) to (41);
\draw (41) to (37);
\draw (41) to (38);
\draw (10) to (41);
\draw (10) to (38);
\draw (10) to (39);
\draw (9) to (39);
\draw (9) to (40);
\draw [style=arrow] (42) to (43);
\draw [style=arrow] (45) to (44);
\draw [style=arrow] (46) to (47);
\draw [style=arrow] (49) to (48);
	
\end{tikzpicture}
			\caption{An example of four $s-t$ paths, with a large face $W$ crossing $Q_1$. The vertex $w^\nabla$ of $G^\nabla-G$ inside $W$ is star-shaped.}
		\end{center}
		\label{fig:main_theorem_setup}
	\end{figure}
	
	It's clear that $D\cap G \leq C\cap G$ (since $w^\nabla$ is not in $G$), and that $s$ and $t$ are vertices of $D$, proving (1) and (4).
	Note that $W$ covers $D$, but crosses $C$ --- and that no large face can cross $D$ but not $C$.
	Thus $\zeta(D) < \zeta(C)$.
	Similarly, if any large face splits $s$ and $t$ in $D$, then by (1), it splits $s$ and $t$ in $C$.
	Hence no large face splits $s$ and $t$ in $D$, proving (2).
	Because $W$ does not split $s$ and $t$, the vertex $w^\nabla$ is not adjacent to any vertex of $Q_2-\{s,t\}$.
	By the choice of $x$ and $y$, the path $Q_1'$ does not have a chord. 
	By the prior two observations, the cycle $D$ is chordless.
	Finally, let $P$ be a path satisying the hypotheses of (5). 
	Since $D\cap G \leq C\cap G$, we have $P\cap D \leq P\cap C$, so $P$ and $D$ are internally disjoint.
	
	It remains to prove (5).
	Suppose paths $P_1$ and $P_2$ satisfy the hypotheses of (5), and assume without loss of generality that $P_1 \leq \int[C]$ and $P_2\leq \ext[C]$.
	Since $D\cap G \leq C\cap G$, we have $P_i\cap D \leq P_i\cap C$, so $P_i$ and $D$ are internally disjoint (for $i=1,2$).
	As $s$ and $t$ are not adjacent, there exist vertices $u$ of $P_1-\{s,t\}$, $v$ of $P_2-\{s,t\}$, and $z$ in $Q_2 - \{s,t\}$.
	By Propositions \ref{prop:min_sep_3conn_2_components} and \ref{prop:chordless_cycle_is_min_sep}, the cycle $C$ is a minimal cut of $G^\nabla$, and the subgraphs $\int(C)$ and $\ext(C)$ of $G^\nabla$ are its two connected components.
	Similarly, the cycle $C' = P_1\cup P_2$ is a minimal cut.
	Without loss of generality, $Q_2-\{s,t\}$ lies in $\int(C')$ and $Q_1-\{s,t\}$ lies in $\ext(C')$.
	As $P_1$ and $P_2$ are paths in $G$, and $W\cap C\leq Q_1$, we also have that $W\leq \ext[C']$.
	As $C'$ is a minimal cut of $G^\nabla$, the vertex $u$ has a neighbor in both regions of $C'$, and  $\int(C')$ is connected.
	Thus there is a $u-z$ path $R_1$ such that $R_1-\{u\} \leq \int(C')$.
	Similarly, there is a $z-v$ path $R_2$ with $R_2-\{v\} \leq \int(C')$.
	Assume now for the sake of contradiction that $P_1$ and $P_2$ lie in the same region of $D$ (we'll say, without loss of generality, that they lie in the interior).
	Since $D$ is a minimal cut of $G^\nabla$, the interior of $D$ is connected.
	As $C'$ is a minimal cut, the vertices $u$ and $v$ have neighbors in both $\int(C')$ and $\ext(C')$.
	Thus, there is a $u-v$ path $R_3$ in $\int(D)$ that is internally disjoint from $P_1$ and $P_2$.
	Note that by contracting the paths $Q_1', Q_2, R_1, R_2$ and $R_3$, we obtain a $K_5$ minor of $G^\nabla$ with vertices $s, t, u, v$ and $z$, contradicting planarity.
\end{proof}

We can now present our main result.

\begin{thm}
	A 4-connected planar graph is cleavable if and only if it has at most one large face (i.e., if it is a near triangulation).
	\label{thm:4_conn_cleavable_iff_near_mpg}
\end{thm}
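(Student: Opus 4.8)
The plan is to prove both implications, handling the easy direction directly and the hard direction by contraposition. The reverse implication is immediate: a 4-connected planar graph is $2$-connected, so if it has at most one large face it is a near-triangulation, and Corollary \ref{cor:near_triangulation_cleavable} gives cleavability. For the forward direction I argue the contrapositive: assuming $G$ is a 4-connected plane graph with at least two large faces, I exhibit a disconnected minimal cut. If two large faces intersect, Corollary \ref{cor:wells_meet_disconn_sep} finishes at once, so the entire content lies in the case where all large faces are pairwise disjoint. I therefore fix two disjoint large faces $W$ and $X$, with auxiliary vertices $w^\nabla$ and $x^\nabla$ in $G^\nabla$.

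The target configuration is a chordless Jordan-separating cycle $D$ of $G^\nabla$ with $\zeta(D)=0$ that passes through both $w^\nabla$ and $x^\nabla$. Given such a $D$, Theorem \ref{thm:auxillary_min_sep_restricts} makes $S=V(D)\cap V(G)$ a minimal cut of $G$; and since $w^\nabla$ and $x^\nabla$ are non-adjacent (they lie in disjoint faces, and $w^\nabla$ is adjacent only to $V(W)$), they split $D$ into two arcs each containing at least one vertex of $G$, while chordlessness forbids any edge of $G$ between vertices of distinct arcs. Hence $G[S]$ is disconnected, and everything reduces to building such a $D$.

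To build $D$, I would start from a chordless cycle $C_0=(s,w^\nabla,t)\cup Q_2$, where $s,t$ are two vertices non-adjacent on $W$ (these exist as $|W|\geq 4$) and $Q_2$ is a chordless $s$--$t$ path meeting $W$ only in $\{s,t\}$ and passing through two vertices $c,c'$ that are non-adjacent on $X$. Then $W$ covers $C_0$ while $X$ crosses it, and crucially, because one of the two $s$--$t$ arcs is exactly $(s,w^\nabla,t)$ and carries no vertex of any large face besides the anchors $s,t$, every large face meets $C_0$ only in the other arc, so no large face splits $s$ and $t$. This lets me iterate the Crossing Removal Algorithm (Theorem \ref{thm:crossing_removal_algorithm}) with the fixed pair $s,t$: each step keeps $s,t$ on the cycle, preserves the non-splitting property (2), and strictly decreases $\zeta$, terminating in a chordless cycle $D$ with $\zeta(D)=0$. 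Since every reroute happens in the arc containing the processed crossing face, the arc $(s,w^\nabla,t)$ is never disturbed, so $w^\nabla$ survives and $W$ still covers $D$. To force $x^\nabla$ onto $D$, I take witness paths $P_1,P_2$---chordless $s$--$t$ paths with interiors in $G$, running through $c$ inside $C_0$ and through $c'$ outside $C_0$ (they exist because $\int(C_0)\cap G$ and $\ext(C_0)\cap G$ are connected, by Claim~1 of Theorem \ref{thm:auxillary_min_sep_restricts}). Property (5) of Theorem \ref{thm:crossing_removal_algorithm} keeps $P_1,P_2$ in opposite regions throughout, so $X$ retains vertices strictly inside and strictly outside $D$; as $X$ does not cross $D$, it must cover it, placing $x^\nabla$ on $D$. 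The same witnesses certify that $D$ is Jordan-separating, so Theorem \ref{thm:auxillary_min_sep_restricts} applies and the argument of the previous paragraph delivers the disconnected minimal cut.

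The main obstacle is the bookkeeping in this last construction. First, one must establish existence of the initial cycle $C_0$ with $W$ covering and $X$ crossing at a non-adjacent pair; I would extract this from the connectivity of the triangulation $G^\nabla$ together with planarity, and the details, while routine, are fiddly. Second, and harder, one must verify that the crossing-removal iteration genuinely preserves everything at once---that $w^\nabla$ and $x^\nabla$ both remain on the final cycle, that Jordan-separation persists, and that $\zeta$ reaches $0$---including when three or more large faces meet the cycle. The invariants (2) and (5) are precisely engineered for this, but marshalling them simultaneously is where the real care is needed.
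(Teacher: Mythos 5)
Your skeleton follows the paper's in outline (reduce to pairwise-disjoint large faces via Corollary \ref{cor:wells_meet_disconn_sep}, build a chordless cycle in $G^\nabla$, clean it with Theorem \ref{thm:crossing_removal_algorithm}, finish with Theorem \ref{thm:auxillary_min_sep_restricts}), and your closing argument --- chordlessness of $D$ plus the presence of both $w^\nabla$ and $x^\nabla$ on $D$ forces $G[S]$ to be disconnected --- is correct, and arguably cleaner than the paper's. The gap is in your mechanism for getting $x^\nabla$ onto $D$, and it is not bookkeeping: it is a topological impossibility. Your initial cycle $C_0=(s,w^\nabla,t)\cup Q_2$ avoids $x^\nabla$, so the open face $f_X$ is disjoint from the Jordan curve $C_0$ (its only non-$G$ points lie inside $f_W$) and therefore lies entirely in one region of $C_0$. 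Every vertex of $X$ not on $C_0$ lies on the closure of $f_X$ and not on the curve, hence lies in that \emph{same} region. Consequently $X$ can never have vertices strictly inside and strictly outside $C_0$, so the witness paths you want --- $s$--$t$ paths in \emph{different} regions of $G^\nabla-C_0$, each meeting $X$ --- do not exist. (As literally described they are invalid anyway: you route them through $c$ and $c'$, which lie on $C_0$, so they are not contained in regions of $G^\nabla-C_0$ and the hypothesis of property (5) is violated.) Your appeal to Claim 1 of Theorem \ref{thm:auxillary_min_sep_restricts} is also out of scope: that claim is proved under the hypothesis that no large face crosses the cycle, whereas $X$ crosses $C_0$ by construction; indeed $\int(C_0)\cap G$ can fail to be connected in that situation, with the disk of $f_X$ pinching the interior into two lobes at $c$ and $c'$.

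Note that the purely combinatorial notion of ``crossing'' (two vertices of $X\cap C_0$ non-adjacent on $X$) does not imply that $X$ straddles $C_0$ geometrically, precisely because of the observation above; so nothing prevents the crossing-removal iteration from terminating with a chordless cycle $D$ whose only auxiliary vertex is $w^\nabla$, in which case $V(D)\cap V(G)$ is a single arc, i.e., a \emph{connected} minimal cut, and the proof collapses. The paper sidesteps this trap by making $w^\nabla$ and $x^\nabla$ themselves the anchors $s,t$ of the crossing-removal iteration, so that property (1) keeps both on the cycle for free: it invokes Menger's theorem (this is where 4-connectivity enters) to obtain four disjoint $W$--$X$ paths $P_1,\dots,P_4$, forms the initial chordless cycle $C=(w_1,w^\nabla,w_3)\cup P_1\cup(x_1,x^\nabla,x_3)\cup P_3$ through both auxiliary vertices, and uses $P_2$ and $P_4$ (extended through $w^\nabla$ and $x^\nabla$) as the property-(5) witnesses that keep $D$ Jordan-separating. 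If you insist on anchoring at $s,t\in V(W)$, you would still have to route the initial cycle through $x^\nabla$ and protect it through every reroute, which amounts to reconstructing the paper's Menger configuration; so the missing ingredient in your sketch is exactly the four-disjoint-paths argument, and it cannot be dismissed as routine.
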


\begin{proof}
	Theorem \ref{thm:min_sep_of_near_triangulation} shows that if $G$ has at most one large face, then it is cleavable.
	Assume $G$ has at least two large faces, and let $G^\nabla$ be the auxiliary graph of $G$.
	We will find a disconnected minimal cut of $G$.
	By Corollary \ref{cor:wells_meet_disconn_sep}, we may assume that no two large faces intersect.
	Let $W$ and $X$ be two large faces of $G$ with vertices $w^\nabla$ inside $W$ and $x^\nabla$ inside $X$.
	Note that $|V(W)|, |V(X)| \geq 4$.
	By Menger's Theorem, there exist four disjoint $W-X$ paths $P_1, \dots, P_4$ in $G$.
	Let $P_i$ start at vertex $w_i$ in $W$ and end at $x_i$ in $X$.
	We may assume that each of these paths is chordless, and that $w_1, w_2, w_3, w_4$ appear in that cyclic order on $W$.
	
	\begin{figure}[h]
		\begin{center}
			\input{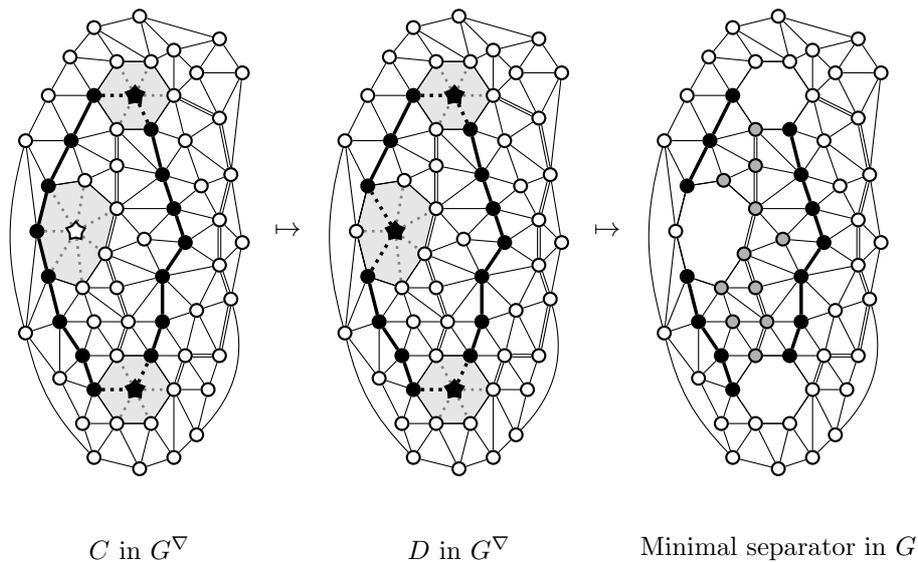}
			\caption{A demonstration of how the proof of Theorem \ref{thm:4_conn_cleavable_iff_near_mpg} extracts a disconnected minimal cut from four $W-X$ Menger paths.}
		\end{center}
		\label{fig:algo_steps}
	\end{figure}
	
	In $G^\nabla$, the cycle $C = (w_1, w^\nabla, w_3)\cup P_1 \cup (x_1, x^\nabla, x_3)\cup P_3$ is chordless, and separates $P_2$ from $P_4$.
	Further, due to the two paths $P_2$ and $P_4$ of $G$, no large face of $G$ splits $w^\nabla$ and $x^\nabla$ in $C$.
	Starting with $C$, and repeatedly applying Theorem \ref{thm:crossing_removal_algorithm}, we obtain a cycle $D$ of $G^\nabla$ such that:
	\begin{itemize}
		\item $D$ is chordless, and
		\item $w^\nabla$ and $x^\nabla$ are vertices of $D$, and
		\item no large face of $G$ crosses $D$, and
		\item $D\cap G \leq C\cap G$, and
		\item $D$ Jordan-separates $P_2$ and $P_4$.
	\end{itemize}
	By Theorem \ref{thm:auxillary_min_sep_restricts}, the set $S = V(D)\cap V(G)$ is a minimal cut of $G$, and $S\subseteq V(P_1)\cup V(P_3)$. 
	The vertices of $G^\nabla-G$ form an independent set in $G^\nabla$, so $S\cap V(P_1)\neq \emptyset$ and $S\cap V(P_3)\neq \emptyset$.
	Finally, the sets $S\cap V(P_1)$ and $S\cap V(P_3)$ are not adjacent in $G$ as they are separated by $P_2 \cup P_4$. 
	Thus $G[S]$ has at least two components, completing the proof.
\end{proof}

By Theorem \ref{thm:min_sep_of_near_triangulation}, every 2-connected graph with at most one large face is cleavable. 
However, Theorem \ref{thm:4_conn_cleavable_iff_near_mpg} does not hold if we weaken the connectivity requirement: the graph in Figure \ref{fig:3conn_cleavable_inter_wells} is 3-connected, cleavable, and has multiple large faces. 
The example in Figure \ref{fig:3conn_cleavable_inter_wells} is somewhat trivial --- many of the large faces are adjacent, and removing the curved edges yields a graph that is still 3-connected, but has only one large face.
As the dedicated reader may check, the graph in Figure \ref{fig:3conn_counter_ex} is 3-connected, cleavable, and contains two non-intersecting large faces. 
Further, this graph is not spanned by any 3-connected planar subgraph with a single large face.

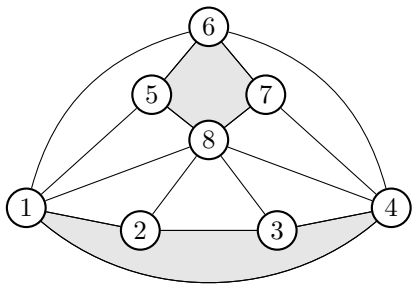
\begin{figure}[h]
	\begin{center}
		\begin{tikzpicture}
	[scale=0.6,inner sep=0.8mm, 
	blackvertex/.style={circle,thick,draw, fill=black},
	whitevertex/.style={circle,thick,draw, fill=white},
	greyvertex/.style={circle,thick,draw, fill=black!30},
	star/.style={shape=star, star points = 5, thick, draw, fill=white},
	none/.style={},
	red/.style={line width=2pt},
	dashedthick/.style={line width=2pt, dotted},
	lightedge/.style={line width=1pt, color=black!50, dotted}]
	
	\pgfdeclarelayer{bg} 
	\pgfdeclarelayer{nodelayer} 
	\pgfdeclarelayer{edgelayer}  
	\pgfsetlayers{bg, edgelayer, nodelayer, main} 
	
	\begin{pgfonlayer}{nodelayer}
		\node [style=whitevertex] (0) at (-4, 0) {$1$};
		\node [style=whitevertex] (1) at (4, 0) {$4$};
		\node [style=whitevertex] (2) at (0, 4) {$6$};
		\node [style=whitevertex] (3) at (-1.5, -0.5) {$2$};
		\node [style=whitevertex] (4) at (1.5, -0.5) {$3$};
		\node [style=whitevertex] (5) at (0, 1.5) {$8$};
		\node [style=whitevertex] (6) at (-1.25, 2.5) {$5$};
		\node [style=whitevertex] (7) at (1.25, 2.5) {$7$};
	\end{pgfonlayer}
	\begin{pgfonlayer}{edgelayer}
		\draw [fill=black!10] (3.center)
		to (0.center)
		to [bend right=45] (1.center)
		to (4.center)
		to cycle;
		\draw [fill=black!10] (6.center)
		to (5.center)
		to (7.center)
		to (2.center)
		to cycle;
		\draw [bend right=45] (0.center) to (1.center);
		\draw [bend left] (0) to (2);
		\draw [bend left] (2) to (1);
		\draw (0) to (3);
		\draw (3) to (4);
		\draw (4) to (1);
		\draw (5) to (3);
		\draw (5) to (4);
		\draw (6) to (5);
		\draw (5) to (7);
		\draw (6) to (2);
		\draw (2) to (7);
		\draw (0) to (5);
		\draw (5) to (1);
		\draw (0) to (6);
		\draw (7) to (1);
	\end{pgfonlayer}
\end{tikzpicture}
		\caption{A 3-connected, cleavable graph.}
		\label{fig:3conn_counter_ex}
	\end{center}
\end{figure}

\section{The Auxiliary graph and planar supergraphs}
\label{sec:auxillary}

Throughout the proof of the main result, Theorem \ref{thm:4_conn_cleavable_iff_near_mpg}, we make use of the auxiliary graph. 
In this section, we examine this tool further.
The results here (in tandem with Proposition \ref{prop:min_sep_mpg_is_cycle}) help demonstrate that the auxiliary graph is a convenient tool for studying cuts in 3-connected planar graphs.

\begin{lem}
	Let $G$ and $H$ be 3-connected plane graphs such that $G$ is an induced subgraph of $H$.
	If $R$ is a minimal cut of $G$, then there exists a minimal cut $S$ of $H$ such that $R\subseteq S$.
	\label{lem:min_separator_extends}
\end{lem}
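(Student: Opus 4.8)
The plan is to realize the two components of $G-R$ as connected sets inside $H$, exhibit an $A$–$B$ separator of $H$ that contains $R$, and then shrink it to a minimal one while arguing that none of $R$ can be lost along the way.

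First I would apply Proposition \ref{prop:min_sep_3conn_2_components} to write $G-R$ as exactly two components $A$ and $B$, both connected and nonempty (nonempty since $R$ is a cut). By Proposition \ref{prop:min_sep_is_adjacent}, every vertex $r\in R$ has a neighbour in $A$ and a neighbour in $B$. Because $G$ is an \emph{induced} subgraph of $H$, the sets $A$ and $B$ stay connected in $H$, and each $r\in R$ still has a neighbour in each of $A$ and $B$ within $H$.

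The key observation is that $H$ admits an $A$–$B$ separator containing $R$. Indeed, take $S_0 = R \cup (V(H)\setminus V(G))$. Then $H - S_0 = H[A\cup B] = G[A\cup B]$, using that $G$ is induced, and in $G[A\cup B]$ there are no edges between $A$ and $B$ since they are distinct components of $G-R$. Hence $A$ and $B$ lie in different components of $H - S_0$, so $S_0$ is an $A$–$B$ separator. Now choose an inclusion-minimal $A$–$B$ separator $S\subseteq S_0$; since every proper subset of $S$ is also contained in $S_0$, such an $S$ is a genuine minimal $A$–$B$ separator of $H$, and so by Proposition \ref{prop:min_ABsep_is_min_sep} it is a minimal cut of $H$.

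It remains to show $R\subseteq S$, which is the only real content. Suppose some $r\in R$ lies outside $S$. Let $C_A$ and $C_B$ be the components of $H-S$ containing $A$ and $B$ respectively; they are distinct because $S$ separates $A$ from $B$. Since $r\notin S$, it lies in some component of $H-S$, yet it has a neighbour in $A\subseteq C_A$ and a neighbour in $B\subseteq C_B$, both of which lie in $H-S$; this forces $r$, $C_A$ and $C_B$ into a single component, contradicting $C_A\neq C_B$. Therefore $R\subseteq S$, and $S$ is the desired minimal cut. The step I would flag as the crux is precisely this last argument: ensuring that $R$ survives the passage to a minimal separator. Everything hinges on each vertex of $R$ being adjacent to \emph{both} $A$ and $B$ — the minimality of $R$ in $G$ transported to $H$ through the induced-subgraph hypothesis — since without the ``adjacent to every component'' characterization a refinement of $S_0$ could in principle discard vertices of $R$.
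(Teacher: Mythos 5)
Your proof is correct and follows essentially the same route as the paper: both use Propositions \ref{prop:min_sep_3conn_2_components} and \ref{prop:min_sep_is_adjacent} to get the two components and the adjacency of $R$ to each, both exhibit $R\cup (V(H)\setminus V(G))$ as a separator via the induced-subgraph hypothesis, and both shrink it to a minimal $A$--$B$ separator and invoke Proposition \ref{prop:min_ABsep_is_min_sep}. The only difference is that you spell out the step the paper dismisses with ``clearly every $C_1-C_2$ separator in $H$ must contain $R$,'' which is a worthwhile elaboration but not a different argument.
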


\begin{proof}
	By Propositions  \ref{prop:min_sep_is_adjacent} and \ref{prop:min_sep_3conn_2_components}, $G-R$ has two components $C_1$ and $C_2$, and every vertex of $R$ has a neighbor in each of these components.
	Clearly every $C_1-C_2$ separator in $H$ must contain $R$.
	Further, since $G$ is an induced subgraph of $H$, $R\cup (V(H)-V(G))$ is a $C_1-C_2$ separator in $H$.
	Thus there exists a minimal $C_1-C_2$ separator $S$ in $H$ such that $R\subseteq S \subseteq R\cup (V(H)-V(G))$.
	By Proposition \ref{prop:min_ABsep_is_min_sep}, $S$ is a minimal cut of $H$.
\end{proof}

If $H= G^\nabla$, the cut $S$ promised by Lemma \ref{lem:min_separator_extends} is unique.
As the proof shows, we can easily see which vertices of $G^\nabla-G$ belong to $S$.

\begin{thm}
	Suppose $G$ is a 3-connected plane graph.
	For every minimal cut $R$ of $G$, there exists a {\em unique} minimal cut $S$ of $G^\nabla$ such that $S\cap V(G) = R$.
	\label{thm:min_sep_extends_uniquely_augmented}
\end{thm}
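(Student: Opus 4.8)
The plan is to get existence immediately from Lemma~\ref{lem:min_separator_extends}, and then to spend the bulk of the argument on uniqueness by determining, for each added vertex of $G^\nabla$, whether it is forced into or out of $S$. For existence, applying Lemma~\ref{lem:min_separator_extends} with $H=G^\nabla$ yields a minimal cut $S$ of $G^\nabla$ with $R\subseteq S\subseteq R\cup(V(G^\nabla)\setminus V(G))$; intersecting with $V(G)$ gives $S\cap V(G)=R$ at once, since the added vertices lie outside $V(G)$.

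For uniqueness, let $U=V(G^\nabla)\setminus V(G)$ be the (independent) set of added vertices, one vertex $v_W$ per large face $W$, with $v_W$ adjacent to exactly $V(W)$. Let $S$ be any minimal cut of $G^\nabla$ with $S\cap V(G)=R$, so $S=R\cup T$ for some $T\subseteq U$; I must show $T$ is forced. Since $R$ is a minimal cut of the $3$-connected graph $G$, Propositions~\ref{prop:min_sep_is_adjacent} and~\ref{prop:min_sep_3conn_2_components} give that $G-R$ has exactly two components $C_1,C_2$, each of which remains connected in $G^\nabla-S$ (as $G$ is induced in $G^\nabla$ and $S\cap V(G)=R$). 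Likewise $G^\nabla-S$ has exactly two components $A,B$, so each $C_i$ lies in one of them.

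The crucial step is to show that $C_1$ and $C_2$ lie in \emph{different} components of $G^\nabla-S$. Suppose not, say $C_1\cup C_2\subseteq A$. Then $B$ consists only of added vertices; but each $v_W\in B$ has all of its non-$S$ neighbours in $V(W)\setminus R\subseteq C_1\cup C_2\subseteq A$ and is non-adjacent to every other added vertex, so $B$ can only be a single isolated vertex $\{v_W\}$ with $V(W)\subseteq R$. Minimality (Proposition~\ref{prop:min_sep_is_adjacent}) then forces every vertex of $S$ to be adjacent to $v_W$, i.e.\ $S\subseteq V(W)$, whence $R=S=V(W)$. This is impossible: $V(W)$ is a face-cycle of the $3$-connected graph $G$, so by Lemma~\ref{lem:3conn_path_avoids_face} its removal leaves $G$ connected, contradicting that $R$ is a cut. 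I expect this case — ruling out that the two sides of $R$ merge once the face vertices are added — to be the main obstacle.

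With $C_1\subseteq A$ and $C_2\subseteq B$ established, the membership of each $v_W$ is determined. Since $A\cap V(G)=C_1$ and $B\cap V(G)=C_2$, and every neighbour of $v_W$ lies in $V(W)\subseteq V(G)$: if $v_W\in T$ then minimality makes it adjacent to both $A$ and $B$, forcing $V(W)\cap C_1\neq\emptyset$ and $V(W)\cap C_2\neq\emptyset$; conversely if $v_W\notin T$ it lies in $A$ or in $B$, and an edge to the opposite side would join $A$ and $B$, so $V(W)$ can meet at most one of $C_1,C_2$. Thus $v_W\in S$ if and only if $V(W)$ meets both $C_1$ and $C_2$ — a condition depending only on $R$. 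Hence $T$, and therefore $S$, is unique.
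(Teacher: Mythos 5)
Your proof is correct, and its uniqueness argument takes a genuinely different route from the paper's. Both proofs get existence identically from Lemma~\ref{lem:min_separator_extends} (note that, like the paper, you need the containment $S\subseteq R\cup(V(G^\nabla)\setminus V(G))$ that comes from the lemma's \emph{proof} rather than its statement --- this is fine, the paper leans on the same fact). For uniqueness, the paper invokes Proposition~\ref{prop:min_sep_mpg_is_cycle} to see that $S$ induces a chordless cycle $C$ in the triangulation $G^\nabla$, and then characterises membership by ``$v_W\in S$ iff $W\cap C$ is a pair of non-adjacent vertices'': the forward direction follows from chordlessness, and the converse from a Jordan-curve argument that exhibits a proper subgraph of $G[R]$ separating $G$, contradicting minimality of $R$. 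You instead argue purely combinatorially: using Propositions~\ref{prop:min_sep_is_adjacent} and~\ref{prop:min_sep_3conn_2_components} you place the two components $C_1,C_2$ of $G-R$ into the two components of $G^\nabla-S$, rule out the degenerate case where they fall on the same side (your reduction to $S=R=V(W)$, killed by Lemma~\ref{lem:3conn_path_avoids_face}, is the right move and is sound), and then read off the forced membership rule ``$v_W\in S$ iff $V(W)$ meets both $C_1$ and $C_2$.'' Your approach buys a proof that avoids the chordless-cycle machinery and all explicit topology, using only the independence of the added vertices and the quoted propositions, and it produces a clean explicit formula for $S$ in terms of the components of $G-R$; the paper's approach buys a characterisation phrased locally in terms of $W\cap R$ (a pair of non-adjacent vertices), which meshes with the chordless-cycle viewpoint used throughout the rest of the paper. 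The two membership criteria are of course equivalent, since both pin down the same unique cut.
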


\begin{proof}
	Suppose $R$ is a minimal cut of $G$. 
	By Lemma \ref{lem:min_separator_extends}, $R$ extends to a minimal cut $S$ of $G^\nabla$ --- and $S$ induces a chordless cycle $C = G^\nabla[S]$ per Proposition \ref{prop:min_sep_mpg_is_cycle}.
	Let $W$ be (the cycle bounding) an arbitrary large face of $G$, and let $v_W$ be the vertex of $G^\nabla-G$ in the large face. 
	To prove that $S$ is the unique minimal cut of $G^\nabla$ extending $R$, it suffices to prove the following claim:
	
	\textit{Claim:} The vertex $v_W$ is in $S$ if and only if $W\cap C$ consists of two non-adjacent vertices.\\
	Suppose $v_W \in S$, and let $C = u_1, \dots, u_i, v_W = u_{i+1}, u_{i+2}, \dots, u_k$.
	Since $C$ is an induced cycle, $W\cap C$ consists of only the vertices $u_i$ and $u_{i+2}$ --- and these vertices are not adjacent.
	Conversely, suppose $W\cap C = \{u,w\}$, where $u$ and $w$ are not adjacent.
	Assume to the contrary that $v_W$ is not in $S$.
	Then $u$ and $w$ partition $C$ into two internally disjoint $u-w$ paths $C_1$ and $C_2$. 
	Similarly, $u$ and $w$ partition $W$ into internally disjoint paths $W_1$ and $W_2$. 
	Let $x_1$ and $x_2$ be vertices of $\ins{W_1}$ and $\ins{W_2}$, respectively.
	The Jordan curve $J$ formed from the path $C_1$ by adding a $u-w$ arc through $W$ separates $x_1$ and $x_2$. 
	Further, $J\cap G$ is a proper subgraph of $G[R]$, contradicting the minimality of $R$ and proving the claim.
\end{proof}

\begin{rem}
	The auxiliary graph $G^\nabla$ is the smallest maximal planar graph containing $G$ as an induced subgraph --- in the sense that every maximal planar graph $H$ containing $G$ as an induced subgraph has a $G^\nabla$ minor.
\end{rem}

To see the above remark, contract all the components of $H-G$ down to a single vertex to obtain $G^\nabla$.

\section{Components of minimal cuts}
\label{sec:components}

In this section, we find constraints on the number and type of components of a disconnected minimal cut in a 4-connected planar graph.

\begin{thm}
	Let $G$ be a 2-connected plane graph with $k$ large faces.
	If $R$ is a minimal cut of $G$, then $G[R]$ has at most $k$ components.
	\label{thm:num_components_num_wells}
\end{thm}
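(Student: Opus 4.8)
The plan is to realize $R$ as the $G$-part of a chordless cycle in the auxiliary graph $G^\nabla$, and then count the arcs into which that cycle is cut by the $k$ added vertices. Recall that $G^\nabla$ is obtained from $G$ by inserting one new vertex $v_W$ in each large face $W$ and joining it to all of $V(W)$; thus $V(G^\nabla)\setminus V(G)=\{v_{W_1},\dots,v_{W_k}\}$ has exactly $k$ vertices, and these form an independent set, since each $v_{W_i}$ is adjacent only to vertices of $V(W_i)\subseteq V(G)$. Moreover $G^\nabla$ is a triangulation, hence a $3$-connected maximal planar graph, so I may apply Propositions \ref{prop:min_ABsep_is_min_sep} and \ref{prop:min_sep_mpg_is_cycle} to it.

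First I would extend $R$ to $G^\nabla$. Because $R$ is a minimal cut of $G$, Proposition \ref{prop:min_sep_is_adjacent} tells us every vertex of $R$ is adjacent to every component of $G-R$; fix two such components $A$ and $B$. For each $v\in R$ the edges from $v$ to $A$ and to $B$ form a path through $v$, so $A$ and $B$ lie in the same component of $G-(R-v)$; hence $R$ is in fact a \emph{minimal} $A$--$B$ separator of $G$. Now $R\cup(V(G^\nabla)\setminus V(G))$ separates $A$ from $B$ in $G^\nabla$ (deleting it leaves exactly $G-R$), so $G^\nabla$ has a minimal $A$--$B$ separator $S$ with $S\subseteq R\cup(V(G^\nabla)\setminus V(G))$. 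Since $G$ is an induced subgraph of $G^\nabla$, the set $S\cap V(G)$ still separates $A$ from $B$ in $G$; as $S\cap V(G)\subseteq R$ and $R$ is a minimal $A$--$B$ separator, I conclude $S\cap V(G)=R$. Because $A$ and $B$ are connected and $G^\nabla$ is $3$-connected and planar, Proposition \ref{prop:min_ABsep_is_min_sep} shows $S$ is a minimal cut of $G^\nabla$, and then Proposition \ref{prop:min_sep_mpg_is_cycle} shows that $C:=G^\nabla[S]$ is a chordless cycle with $V(C)\cap V(G)=R$.

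It remains to count. The vertices of $V(C)\setminus V(G)$ are among the $k$ independent vertices $v_{W_i}$, so there are $j\le k$ of them, and being independent, no two are consecutive on $C$. Deleting them from the cycle $C$ therefore leaves exactly $j$ nonempty arcs (when $j\ge 1$), each a path all of whose vertices and edges lie in $G$ --- and hence a connected subset of $R$. These arcs cover $R$, so each arc lies in a single component of $G[R]$ while every component meets at least one arc; thus $G[R]$ has at most $j\le k$ components. (If $j=0$ then $C\subseteq G$ and $R=V(C)$ induces a single connected cycle, which is at most $k$ components whenever $k\ge 1$.)

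The crux of the argument --- and the step needing the most care --- is the extension to $G^\nabla$: the paper's extension results (Lemma \ref{lem:min_separator_extends} and Theorem \ref{thm:min_sep_extends_uniquely_augmented}) are stated only for $3$-connected $G$, whereas here $G$ is merely $2$-connected and $G-R$ may have more than two components. The remedy, carried out above, is to pin $R$ down as a minimal $A$--$B$ separator for a fixed pair of components $A,B$ and to invoke the $3$-connectivity of $G^\nabla$ (rather than of $G$) together with Proposition \ref{prop:min_ABsep_is_min_sep}; everything downstream is then a routine arc count.
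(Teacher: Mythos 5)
Your proposal is correct, and at the top level it follows the same route as the paper: extend $R$ to a minimal cut $S$ of the auxiliary graph $G^\nabla$, invoke Proposition \ref{prop:chordless_cycle_is_min_sep} to see that $G^\nabla[S]$ is a chordless cycle, and then count the at most $k$ vertices of $S\setminus R$ whose deletion breaks that cycle into arcs. The difference lies in how the extension step is justified, and there your version is in fact more rigorous than the paper's. The paper simply cites Lemma \ref{lem:min_separator_extends}, whose hypothesis requires $G$ to be \emph{3-connected}, whereas the $G$ of this theorem is only assumed 2-connected (the lemma's proof even invokes Proposition \ref{prop:min_sep_3conn_2_components} to say $G-R$ has exactly two components, which can fail here). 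You anticipated exactly this mismatch and repaired it inline: fix two components $A,B$ of $G-R$, use Proposition \ref{prop:min_sep_is_adjacent} to see that $R$ is a minimal $A$--$B$ separator of $G$, shrink $R\cup(V(G^\nabla)\setminus V(G))$ to a minimal $A$--$B$ separator $S$ of $G^\nabla$ with $S\cap V(G)=R$, and then use the 3-connectivity of $G^\nabla$ (rather than of $G$) with Proposition \ref{prop:min_ABsep_is_min_sep} to conclude that $S$ is a minimal cut. This is precisely the generalization of Lemma \ref{lem:min_separator_extends} that the theorem actually needs, so your write-up patches a genuine gap in the paper's own argument. One caveat you share with the paper, and handle more honestly: when $k=0$ (i.e.\ $G$ is a triangulation) the stated bound fails, since a minimal cut then induces a single cycle, hence one component; you flag this in your $j=0$ case, while the paper's inference that ``a cycle from which at most $k$ vertices have been removed has at most $k$ components'' silently breaks there.
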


\begin{proof}
	Let $R$ be a minimal cut of $G$, and $G^\nabla$ the auxiliary graph of $G$.
	By Lemma \ref{lem:min_separator_extends}, there is some minimal cut $S\subseteq V(H)$ of $G^\nabla$ such that $R\subseteq S$.
	Since $G^\nabla$ is a maximal planar graph, $H[S]$ is a chordless cycle by Proposition \ref{prop:chordless_cycle_is_min_sep}.
	Each vertex of $G^\nabla-G$ is the single vertex inside of a large face, and therefore the cycle $S-R$ contains at most $k$ vertices.
	Therefore $G[R] = H[R]$ is a cycle from which at most $k$ vertices have been removed, and thus $G[R]$ has at most $k$ components.
\end{proof}

We briefly comment on the problem of determining whether or not a 3-connected planar graph has a stable minimal cut.

\begin{prop}
	If $G$ is a 3-connected plane graph such that no two large faces of $G$ intersect, then $G$ does not have a stable minimal cut.
	\label{prop:no_stable_min_sep}
\end{prop}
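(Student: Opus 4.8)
The plan is to argue by contradiction, lifting a hypothetical stable minimal cut up to the auxiliary graph and exploiting the alternating structure this forces. Suppose, for contradiction, that $R$ is a stable minimal cut of $G$. Since $G$ is $3$-connected, Theorem \ref{thm:min_sep_extends_uniquely_augmented} lets me extend $R$ to a minimal cut $S$ of the auxiliary graph $G^\nabla$ with $S\cap V(G)=R$; and because $G^\nabla$ is a maximal planar graph, Proposition \ref{prop:chordless_cycle_is_min_sep} guarantees that $C:=G^\nabla[S]$ is a chordless cycle. I would then study how the vertices of $C$ are distributed between $R=S\cap V(G)$ and the added vertices $S\setminus V(G)$.

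The heart of the argument is the claim that $C$ must alternate perfectly between vertices of $R$ and added vertices. First, the augmentation only introduces edges incident with the new vertices, so $G$ is an induced subgraph of $G^\nabla$; consequently any edge of $C$ joining two vertices of $V(G)$ is already an edge of $G$. Two consecutive vertices of $C$ both lying in $R$ would therefore yield an edge of $G[R]$, contradicting stability. Second, the vertices of $G^\nabla-G$ form an independent set (each is adjacent only to the boundary of its own large face), so no two consecutive vertices of $C$ can both be added vertices. Hence every edge of $C$ joins a vertex of $R$ to an added vertex, forcing the alternation; in particular $C$ has even length at least $4$ and contains at least two added vertices.

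To close, I would use the hypothesis that no two large faces intersect. Pick any vertex $u\in R$ lying on $C$. By the alternation, its two neighbours on $C$ are added vertices, and since the two neighbours of a vertex on a cycle are distinct, they are distinct added vertices $v_W$ and $v_X$ corresponding to distinct large faces $W\neq X$. As $v_W$ is adjacent in $G^\nabla$ only to vertices of $W$, and $v_X$ only to vertices of $X$, we conclude $u\in W\cap X$, so $W\cap X\neq\emptyset$ — contradicting the assumption that no two large faces intersect. Thus no minimal cut of $G$ can be stable.

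I expect the main obstacle to be the careful bookkeeping around the lift rather than any deep difficulty: one must verify that $G^\nabla[V(G)]=G$ so that $G^\nabla[S]$ restricted to $V(G)$ is exactly $G[R]$, that the added vertices are genuinely independent and in bijection with the large faces, and that the degenerate possibilities (no added vertices in $S$, or a short cycle) are handled — if $S$ contains no added vertex then $C=G[R]$ is itself a cycle and trivially has an edge. Once the alternation is in place, the geometric contradiction from non-intersecting large faces is immediate.
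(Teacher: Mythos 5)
Your proof is correct and follows essentially the same route as the paper: both lift the cut to a chordless cycle $C$ in the auxiliary graph $G^\nabla$, use that the vertices of $G^\nabla-G$ form an independent set, and exploit the fact that a vertex of $R$ flanked on $C$ by two added vertices would lie in two large faces. The paper phrases this directly (each segment of $C$ between consecutive added vertices must retain an edge, so $G[R]$ has an edge), whereas you phrase it by contradiction via perfect alternation --- the same idea in contrapositive form.
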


\begin{proof}
	Let $R$ be a minimal cut of $G$.
	Combining Proposition \ref{prop:min_sep_mpg_is_cycle} and Lemma \ref{lem:min_separator_extends}, we see that $R$ is contained in a cycle $C$ of the auxiliary graph $G^\nabla$.
	The induced graph $G[R]$ is either the single cycle $C$, or a union of paths divided by vertices of $G^\nabla-G$. 
	Since no two large faces are adjacent, each of these paths has at least one edge.
	Thus $R$ is not an independent set.
\end{proof}

Applying Proposition \ref{prop:wells_dont_intersect}, we can say more for 4-connected plane graphs.

\begin{cor}
	If $G$ is a 4-connected plane graph such that $\n(u)$ is connected for each vertex $u$ of $G$, then $G$ does not have a stable minimal cut.
	\label{cor:no_stable_sep_when_nhoods_conn}
\end{cor}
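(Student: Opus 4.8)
The plan is to deduce this immediately from the two preceding results, using the connectivity of neighbourhoods to verify the hypothesis of Proposition \ref{prop:no_stable_min_sep}. A 4-connected plane graph is in particular 3-connected, so Proposition \ref{prop:no_stable_min_sep} will apply the moment we know that no two large faces of $G$ intersect. The only work, then, is to upgrade the assumption ``every neighbourhood is connected'' into the statement ``no two large faces intersect''.

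First I would invoke Proposition \ref{prop:wells_dont_intersect}: since $G$ is 4-connected and every neighbourhood $\n(u)$ is connected, any two large faces $W$ and $X$ satisfy $W\cap X = \emptyset$. Note that this must be checked for an arbitrary pair of large faces, but the hypothesis is global (it holds for \emph{every} vertex $u$), so Proposition \ref{prop:wells_dont_intersect} applies to each pair and we conclude that no two large faces of $G$ intersect.

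Having established that no two large faces of $G$ intersect, I would simply apply Proposition \ref{prop:no_stable_min_sep} to the 3-connected plane graph $G$, which yields that $G$ has no stable minimal cut. There is essentially no obstacle here; the statement is a routine chaining of Propositions \ref{prop:wells_dont_intersect} and \ref{prop:no_stable_min_sep}, and the only point requiring a moment's attention is confirming that the global neighbourhood hypothesis licenses the face-disjointness conclusion for all pairs simultaneously.
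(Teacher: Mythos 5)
Your proof is correct and is exactly the paper's intended argument: the paper derives this corollary by applying Proposition \ref{prop:wells_dont_intersect} (4-connected plus connected neighbourhoods implies no two large faces intersect) and then Proposition \ref{prop:no_stable_min_sep} to the (in particular 3-connected) graph $G$. Nothing is missing.
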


\section{\textsc{Minimal Disconnected Cut} witness in $\bm{\mathcal{O}(n)}$ time}
\label{sec:algorithm}

Throughout this section, $n$ refers to the vertex count of the planar graph in question, and $m$ its number of edges.
In \cite{min_disc_cuts_in_pgs}, it is shown that \textsc{Minimal Disconnected Cut} can be decided in $\mathcal{O}(n^3)$ time for 3-connected planar graphs.
In this section, we present an $\mathcal{O}(n)$ algorithm \textsc{MinDiscCut} for finding a minimal disconnected cut of a 4-connected planar graph (or determining that one does not exist). 
We assume that the reader is familiar with some of the basic data structures for representing a plane graph, such as the \textbf{combinatorial embedding} and \textbf{doubly connected edge list} (DCEL) --- see \cite{comp_geometry_preparata, finding_polyhedra_intersection}.
Unless stated otherwise, we always assume graphs are represented as adjacency lists with the vertex set represented as an array $V = [1,2, \dots, n]$, and that paths and large faces are represented as lists of vertices.
We denote by $\textup{\textbf{Adj}}\bm{(i)}$ the list of vertices adjacent to vertex $i$.

The 1974 paper \cite{hopcroft_tarjan_planarity_test} presents a linear-time algorithm (henceforth the \textbf{Hopcroft-Tarjan algorithm}) for deciding whether a graph (input as an adjacency list) is planar.
The algorithm is not simple --- \cite{hopcroft_tarjan_kocay_exposition} provides further exposition.
An extension of the Hopcroft-Tarjan algorithm by Mehlhorn and Mutzel not only decides whether the input graph is planar, but also provides a planar combinatorial embedding of the graph (in linear time) if it is planar \cite{hopcroft_tarjan_embedding_phase}.
In \cite{hopcroft_tarjan_embedding_implementation}, an implementation of the extended algorithm is outlined.
The Hopcroft-Tarjan algorithm is not the final word on planarity testing and plane embeddings --- a number of other $\mathcal{O}(n)$ time algorithms for embedding a plane graph have been developed.
See for example \cite{pq_tree_planar_embedding}, \cite{simplified_o_n_planarity} and the literature discussion of \cite{hopcroft_tarjan_embedding_phase}.

\begin{thm}[\textsc{Embed}]\textup{\cite{hopcroft_tarjan_planarity_test, hopcroft_tarjan_embedding_phase, pq_tree_planar_embedding, simplified_o_n_planarity}.}
	There exists an $\mathcal{O}(n)$ algorithm \textsc{Embed} that takes as input the adjacency list of a planar graph, and returns a combinatorial embedding of the graph.
	\label{thm:plane_embedding_linear_time}
\end{thm}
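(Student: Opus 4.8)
The plan is to reduce to the classical linear-time planarity and embedding machinery, since the statement is essentially a repackaging of the Hopcroft--Tarjan algorithm together with its embedding extension. The input is promised to be planar, so strictly speaking only the embedding phase is needed; but the cleanest route is to run a full linear-time planarity test that simultaneously constructs the combinatorial embedding, i.e.\ the rotation system giving the cyclic order of the incident edges at each vertex.

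First I would set up the Hopcroft--Tarjan framework: run a depth-first search to orient $G$ into a \emph{palm tree} (tree edges plus back edges), and compute for each vertex the standard lowpoint values $\mathrm{lowpt}_1$ and $\mathrm{lowpt}_2$ in a single pass. Using these, reorder each adjacency list so that edges are explored in the order demanded by the path-based decomposition. The test then processes the graph as a sequence of paths, maintaining a pair of stacks that record which path segments must be embedded to the left and which to the right of the current spine; a conflict on the stacks certifies non-planarity, and its absence certifies planarity. Each elementary step is $\mathcal{O}(1)$ amortised, and since $G$ is planar we have $m \le 3n-6$, so $m = \mathcal{O}(n)$ and the whole procedure runs in $\mathcal{O}(n)$ time.

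To return an embedding rather than a yes/no answer, I would invoke the Mehlhorn--Mutzel extension: during the path-addition process one records, at each vertex, the relative left/right placement of the edges as they are added. Once planarity is confirmed, these sign annotations determine a consistent rotation at every vertex, and a final linear-time traversal assembles them into the cyclic adjacency ordering that constitutes the combinatorial embedding. Equivalently one could use a PQ-tree vertex-addition method built on an $st$-numbering, or an edge-addition method; each has a documented linear-time embedding variant, so the choice is immaterial for the statement.

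The main obstacle lies not in the assembly above but in the underlying correctness and timing analysis of the planarity test itself, which is genuinely intricate: one must prove that the stack discipline \emph{exactly} characterises planarity, and that all the bookkeeping (DFS, lowpoint computation, adjacency reordering, and stack manipulation) can be carried out in linear total time. Since this is precisely what is established in the cited works, the honest proof is to invoke those results; I would sketch the pipeline above and defer the heavy correctness arguments to \cite{hopcroft_tarjan_planarity_test, hopcroft_tarjan_embedding_phase, hopcroft_tarjan_kocay_exposition, pq_tree_planar_embedding, simplified_o_n_planarity}.
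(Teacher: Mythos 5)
Your proposal is correct and matches the paper's treatment: the paper offers no proof of this theorem at all, simply citing the Hopcroft--Tarjan planarity test, the Mehlhorn--Mutzel embedding extension, and the PQ-tree and edge-addition alternatives, exactly as you do. Your sketch of the palm-tree/lowpoint/stack pipeline is accurate supplementary detail, but like the paper you ultimately (and appropriately) defer the real correctness and timing arguments to the cited works.
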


Given a combinatorial embedding of a 2-connected planar graph, we can construct a DCEL for the graph in linear time \cite{comp_geometry_preparata} (for an algorithm to do so, see \cite{finding_polyhedra_intersection}).
With a DCEL, it is easy to scan each face (in time $\mathcal{O}(|V(f)|)$ per face $f$) and create a list containing the vertex sets of each face in time $\mathcal{O}(\sum |V(f)|) = \mathcal{O}(n)$.

\begin{lem}[\textsc{ListLargeFaces}]
	There is an $\mathcal{O}(n)$ algorithm \textsc{ListLargeFaces} whose input is a combinatorial embedding of a 2-connected planar graph $G$, that outputs a list $\Omega = (W_1, \dots, W_k)$ of all the large faces of $G$ --- where each $W_i$ is a list of the large face's vertices as they appear in clockwise order around the face.
	\label{lem:listwells_linear_time}
\end{lem}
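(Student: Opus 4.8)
The plan is to realise \textsc{ListLargeFaces} as a single sweep over a doubly connected edge list. First I would invoke the linear-time DCEL construction described in the paragraph preceding the statement (see \cite{comp_geometry_preparata, finding_polyhedra_intersection}) to convert the given combinatorial embedding of $G$ into a DCEL. This furnishes, for each edge of $G$, a pair of oppositely directed half-edges, every half-edge carrying a pointer to its twin and a pointer to the next half-edge along the boundary of the incident face. Because $G$ is $2$-connected, every face is bounded by a cycle, so each face corresponds to exactly one closed orbit of the next-pointer. In addition I would allocate a Boolean array indexed by half-edges, initialised to \textsc{false}, to record which half-edges have already been traced.

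Next I would trace the faces. Scanning the list of half-edges, whenever I meet a half-edge $h$ that is not yet marked, I follow its next-pointers $h, \mathrm{next}(h), \mathrm{next}^2(h), \dots$ until I return to $h$, marking each half-edge visited along the way and appending the origin vertex of each to a temporary list $W$. This produces the vertices in the cyclic order in which they bound the face; under the standard convention relating the rotation system to face tracing this is the clockwise order required by the statement (see the final paragraph). Each half-edge is marked exactly once and lies in the single orbit that contains it, so no face is traced twice and the total work is proportional to the number of half-edges, namely $\mathcal{O}(m)$.

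Finally, as each orbit is traced I would record its length $|W|$. If $|W|\geq 4$ then the face is large, since its bounding cycle has length at least $4$, and I would append $W$ to the output list $\Omega$; otherwise I discard $W$. The running time is $\mathcal{O}(m)$, and since $G$ is planar we have $m\leq 3n-6$, so the whole procedure runs in $\mathcal{O}(n)$ time, as claimed.

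The DCEL construction and the orbit-tracing are entirely standard, so the remaining points need care in bookkeeping rather than in mathematics. The main thing to get right is that each face is processed exactly once: the visited-flag argument secures this, but one must ensure the flags are indexed by individual half-edges (darts) and \emph{not} by unoriented edges, since the two half-edges of an edge generally lie on different faces. A secondary subtlety is the orientation of the unbounded face, whose boundary is traced by the next-pointer in the opposite rotational sense to the bounded faces; if a uniformly clockwise listing is wanted across all faces, one simply reverses the single outer-face list after tracing it. Neither point affects the $\mathcal{O}(n)$ bound.
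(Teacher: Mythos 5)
Your proposal is correct and takes essentially the same route as the paper, which likewise converts the combinatorial embedding into a DCEL in linear time (via \cite{comp_geometry_preparata, finding_polyhedra_intersection}) and then scans each face in time $\mathcal{O}(|V(f)|)$, for a total of $\mathcal{O}\left(\sum_f |V(f)|\right) = \mathcal{O}(n)$. Your extra bookkeeping details --- per-half-edge visited flags and the orientation of the outer face --- merely make explicit what the paper leaves implicit.
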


\begin{cor}
	The decision problem \textsc{Minimal Disconnected Cut} can be solved in linear time for 4-connected planar graphs.
	\label{cor:min_disc_cut_decide_linear_time}
\end{cor}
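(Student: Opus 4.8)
The plan is to reduce the decision problem directly to counting large faces, invoking the main result of the paper. By Theorem \ref{thm:4_conn_cleavable_iff_near_mpg}, a 4-connected planar graph $G$ is cleavable---that is, every minimal cut of $G$ is connected---if and only if $G$ has at most one large face. Taking the contrapositive, $G$ possesses a \emph{disconnected} minimal cut if and only if $G$ has at least two large faces. Hence deciding \textsc{Minimal Disconnected Cut} on such an input amounts to nothing more than determining whether $G$ has two or more large faces, and the answer is ``yes'' precisely in that case.

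First I would run the algorithm \textsc{Embed} of Theorem \ref{thm:plane_embedding_linear_time} on the adjacency list of $G$, obtaining a combinatorial embedding in $\mathcal{O}(n)$ time; this is legitimate since $G$ is planar by hypothesis, and, being 4-connected, it is in particular 3-connected, so every face is bounded by a cycle. Next I would feed this embedding to \textsc{ListLargeFaces} (Lemma \ref{lem:listwells_linear_time}), which applies because a 4-connected graph is 2-connected; in $\mathcal{O}(n)$ time this produces the list $\Omega = (W_1, \dots, W_k)$ of all large faces of $G$. Finally I would report that a minimal disconnected cut exists if and only if $k = |\Omega| \geq 2$, a check that costs $\mathcal{O}(1)$ once $\Omega$ is in hand.

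Correctness is immediate from Theorem \ref{thm:4_conn_cleavable_iff_near_mpg}, and since planar graphs satisfy $m \leq 3n - 6$ the whole procedure---two linear-time subroutines followed by a length comparison on $\Omega$---runs in $\mathcal{O}(n)$ time. There is no substantial obstacle at this stage: the analytical heavy lifting was carried out in establishing the main theorem, which converts the semantic question ``does a disconnected minimal cut exist?'' into the purely combinatorial and easily computed question ``are there at least two large faces?''. The only points requiring care are the routine verifications that the hypotheses of \textsc{Embed} and \textsc{ListLargeFaces} are satisfied, and these hold because the input is planar by assumption and 2-connected by virtue of being 4-connected.
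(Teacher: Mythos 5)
Your proposal is correct and follows exactly the paper's own argument: run \textsc{Embed}, run \textsc{ListLargeFaces}, and answer ``yes'' if and only if there are at least two large faces, with correctness given by Theorem \ref{thm:4_conn_cleavable_iff_near_mpg}. The only difference is that you spell out the routine hypothesis checks (planarity, 2-connectivity) that the paper leaves implicit.
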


\begin{proof}
	To decide whether a 4-connected planar graph has a disconnected minimal cut, we can find a combinatorial embedding using \textsc{Embed}, and then count the large faces using \textsc{ListLargeFaces}. 
	Thus, the result follows by Theorems \ref{thm:4_conn_cleavable_iff_near_mpg} and \ref{thm:plane_embedding_linear_time}, and Lemma \ref{lem:listwells_linear_time}.
\end{proof}

Let $s,t$ be two non-adjacent vertices of a graph, and $\kappa$ the minimum cardinality of a $s-t$ separator. 
By applying the Ford-Fulkerson flow algorithm to a modified version of the graph, a collection of $\kappa$ internally disjoint $s-t$ paths can be found in $\mathcal{O}(m\kappa)$ time \cite{ford_fulkerson} (for a modern discussion and implementation, see \cite{intro_to_algos}).
In planar graphs this time can be improved --- there exists an $\mathcal{O}(n)$ algorithm for finding a maximum collection of intenally disjoint $s-t$ paths \cite{menger_paths_in_pg} (we attain the same big-Oh time when $\kappa$ is a fixed constant using the flow algorithm).

\begin{thm}[\textsc{MengerPaths}]\textup{\cite{menger_paths_in_pg}}
	There exists an $\mathcal{O}(n)$ algorithm \textsc{MengerPaths} that takes a combinatorial embedding of a planar graph, and two vertices $s$ and $t$, and returns a maximum cardinality collection of internally disjoint $s-t$ paths.
	\label{thm:menger_linear_time}
\end{thm}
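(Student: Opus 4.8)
The plan is to establish the result by reducing the task of finding a maximum family of internally disjoint $s-t$ paths to a unit-capacity maximum-flow computation on a planar auxiliary network, and then recovering the paths by flow decomposition. First I would build the standard vertex-splitting network $G'$: replace each vertex $v\notin\{s,t\}$ by two copies $v^-,v^+$ joined by a capacity-$1$ arc $v^-\to v^+$, and replace each edge $uv$ of $G$ by the two arcs $u^+\to v^-$ and $v^+\to u^-$ of unit capacity. Carrying out the split in the cyclic order of the edges around $v$ recorded by the combinatorial embedding (obtained in $\mathcal{O}(n)$ time via Theorem \ref{thm:plane_embedding_linear_time}) keeps $G'$ planar, so it too can be embedded in linear time. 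By Menger's theorem, an integral maximum $s-t$ flow in $G'$ has value equal to the minimum $s-t$ vertex separator of $G$, which is precisely the maximum number of internally disjoint $s-t$ paths sought.

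The next step is to compute an integral maximum flow in $G'$ and to read the paths off it. Since the vertex arcs have capacity $1$, an integral flow of value $\kappa$ decomposes into $\kappa$ arc-disjoint directed $s-t$ paths which, after projecting $v^-,v^+\mapsto v$, yield internally disjoint $s-t$ paths of $G$; this decomposition is produced in $\mathcal{O}(n)$ by tracing each unit of flow exactly once. The crux is thus the linear-time flow computation, which exploits planar duality: when $s$ and $t$ are cofacial, the maximum flow equals a shortest $s-t$ separating cycle in an appropriately weighted dual, computable in $\mathcal{O}(n)$ for the small nonnegative weights that occur here; for general $s$ and $t$ one first cuts the sphere along a dual $s-t$ curve and then applies the same dual shortest-path machinery, which is what the algorithm of \cite{menger_paths_in_pg} carries out.

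I expect the linear-time flow step to be the main obstacle. Generic planar maximum flow is known only in $\mathcal{O}(n\log n)$, so a black-box application would miss the $\mathcal{O}(n)$ target, and two devices are needed to recover it. When the required connectivity $\kappa$ is a fixed constant --- as in our application, where only four paths are ever demanded --- one sidesteps planar max-flow entirely by running $\kappa$ successive augmenting-path searches on $G'$; each search is a single $\mathcal{O}(n)$ breadth-first traversal, for $\mathcal{O}(\kappa n)=\mathcal{O}(n)$ total. For the unrestricted statement the $\mathcal{O}(n)$ bound rests on the specialized dual technique of \cite{menger_paths_in_pg}, whose correctness depends on the uncrossing and planarity properties of shortest dual curves; checking those properties together with the linear running time of the underlying shortest-path routine is the delicate part, and I would defer the complete argument to that reference.
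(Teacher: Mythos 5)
First, note that the paper does not prove this theorem at all: it is imported verbatim from \cite{menger_paths_in_pg}, and the only original content in the surrounding text is the parenthetical observation that, when $\kappa$ is a fixed constant, running $\kappa$ Ford--Fulkerson augmentations on a modified graph already gives $\mathcal{O}(\kappa n)=\mathcal{O}(n)$ time. Your fixed-$\kappa$ paragraph reproduces exactly this observation, and it is correct (and it is all that \textsc{MinDiscCut} actually needs, since it only ever requests four paths); your deferral of the unrestricted statement to \cite{menger_paths_in_pg} likewise mirrors what the paper does.

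However, the technical route you sketch for the general case contains a concrete false step: the vertex-split network $G'$ is \emph{not} planar, no matter how you arrange the split with respect to the embedding. Splitting the three vertices of a triangle already produces $K_{3,3}$: with parts $\{u^+,v^+,w^+\}$ and $\{u^-,v^-,w^-\}$, every arc of $G'$ --- the three internal arcs $x^-x^+$ together with the six arcs arising from the three edges --- joins the two parts, and each $x^+$ is adjacent to all three vertices of the other part. Since a 4-connected planar graph is full of triangles avoiding $s$ and $t$, the split network is essentially never planar, so the planar-duality machinery you invoke (dual shortest separating cycles, cutting along a dual $s$--$t$ curve) cannot be applied to $G'$ as proposed. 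Preserving planarity requires a different gadget (for instance, expanding each vertex into a small cycle with unit capacities), and in fact the algorithm of \cite{menger_paths_in_pg} does not pass through max-flow on a split network at all --- it works directly on the undirected plane graph. So in your write-up the unrestricted statement rests entirely on the citation, which is acceptable (it is also the paper's stance), but the bridging argument you offer in its place would fail; note also that your fixed-$\kappa$ augmenting-path argument is unaffected, since it needs only that $G'$ has $\mathcal{O}(n)$ vertices and arcs, not that it is planar.
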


We need some basic tools and lemmas to show parts of \textsc{MinDiscCut} run in linear time. 
Let $V = [1,2,\dots, n]$ be a fixed ground set, and $A = (a_1, \dots, a_k)$ a list of elements of $V$ with no repeated entries.
Denote by $\bm{\textup{\textbf{Subset}}(A)}$ the $n$-entry array whose $i^{th}$ entry is 1 if $i$ appears in the list $A$, and $0$ if not.
Similarly, let $\textup{\textbf{Lookup}}\bm{(A)}$ be the $n$-entry array whose $i^{th}$ entry is the integer $j$ if $a_j = i$, and is $0$ if $i$ does not appear in $A$.
For example, if $V=[1,\dots, 8]$ and $A = (3,1,8,5)$, then $\textup{Subset}(A) = [1,0,1,0,1,0,0,1]$ and $\textup{Lookup}(A) = [2,0,1,0,4,0,0,3]$. 
It's clear that both arrays can be initialised in $\mathcal{O}(n)$ time.
Given a list $A$ and an element $i$ of $V$, we denote by $\textup{\textbf{Append}}\bm{(A, i)}$ the action of appending the single element $i$ to the end of the list $A$.

\begin{lem}[\textsc{TruncatePath}]
	Consider a (directed) graph $G$, two disjoint sets $A$ and $B$ of vertices, and a path $P$ starting in $A$ and ending in $B$.
	There is an $\mathcal{O}(n)$ algorithm \textsc{TruncatePath} that finds a subpath $Q = q_1, \dots, q_k$ of $P$ that starts in $A$, ends in $B$, and is internally disjoint from $A\cup B$. 
	\label{lem:truncate_paths_linear_time}
\end{lem}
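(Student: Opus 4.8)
The plan is to realise \textsc{TruncatePath} as a short scan over the vertex list of $P$, using precomputed membership arrays so that each membership test is constant-time. Write $P = p_1, \dots, p_m$ in the order in which it is traversed from its $A$-endpoint to its $B$-endpoint, so $p_1 \in A$ and $p_m \in B$. Since $P$ is a path it has no repeated vertices, whence $m \le n$ and a single pass over $P$ costs $\mathcal{O}(n)$. First I would build the arrays \textsc{Subset}(A) and \textsc{Subset}(B), which the discussion above this lemma notes can be initialised in $\mathcal{O}(n)$ time; thereafter the queries ``is $p_j$ in $A$?'' and ``is $p_j$ in $B$?'' are answered in $\mathcal{O}(1)$ by an array lookup.

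The core of the algorithm is a single forward pass through $P$ in its given orientation. As I read $p_1, p_2, \dots$ I maintain the index $a$ of the most recently seen vertex lying in $A$, and I halt the instant I reach the first vertex $p_b$ lying in $B$. Because $A$ and $B$ are disjoint and $p_1 \in A$, we have $p_1 \notin B$, so the halt index satisfies $b \ge 2$; and because $p_m \in B$, the pass is guaranteed to halt. At the moment of halting, $a$ is the largest index with $a < b$ and $p_a \in A$ — such an index exists, since $p_1 \in A$ already qualifies. I then output $Q = p_a, p_{a+1}, \dots, p_b$, building it with repeated \textsc{Append} in $\mathcal{O}(b-a) \subseteq \mathcal{O}(n)$ time. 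Identifying $q_1 = p_a$ and $q_k = p_b$, this is the claimed subpath.

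For correctness, note $q_1 = p_a \in A$ and $q_k = p_b \in B$, so $Q$ starts in $A$ and ends in $B$. Its internal vertices are exactly $p_{a+1}, \dots, p_{b-1}$. None lie in $B$, because $b$ is by construction the \emph{first} index with $p_b \in B$, so no $p_j$ with $j < b$ belongs to $B$; and none lie in $A$, because $a$ is the \emph{last} index below $b$ with $p_a \in A$, so no $p_j$ with $a < j < b$ belongs to $A$. Hence $\ins{Q} \cap (A \cup B) = \emptyset$, as required. The degenerate case $a = b-1$ yields the single edge $p_a p_b$ with empty interior, which satisfies the disjointness condition vacuously.

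The construction and its verification are elementary, and I do not anticipate a substantive obstacle. The one point requiring care — and the crux of the whole argument — is the order in which the two indices are chosen: taking the last $A$-vertex occurring strictly before the first $B$-vertex is precisely what forces the interior to avoid both sets simultaneously, which a more naive choice (e.g. the first $A$-vertex and the first $B$-vertex) would not guarantee. The $\mathcal{O}(n)$ bound is then immediate once the $\mathcal{O}(n)$ initialisation of \textsc{Subset} is in hand, since all remaining work consists of a single pass and one output construction over a vertex list of length at most $n$.
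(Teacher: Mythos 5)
Your proposal is correct and follows essentially the same approach as the paper: a single pass over $P$ using \textsc{Subset} membership arrays, tracking the most recently seen $A$-vertex and halting at the first $B$-vertex, then returning the segment between them. Your write-up actually spells out the correctness argument (why the interior avoids both $A$ and $B$) more explicitly than the paper, which leaves that point as clear from the construction.
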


\begin{proof}
	We outline the \textsc{TruncatePath} algorithm that takes the path $P = (p_1, \dots, p_k)$ and disjoint sets $A$ and $B$ of vertices, and returns the truncated path $Q$ (see Algorithm \ref{alg:truncate_path}).
	
	\begin{algorithm}[h]
		\caption{\textsc{TruncatePath$(A,B,P)$}}\label{alg:truncate_path}
		\SetKw{EndLoop}{End loop}
		
		\Indm
		\BlankLine
		
		\Indp
		$X\gets \Subset(A)$, $Y\gets \Subset(B)$, $v_a\gets p_1$, $v_b\gets p_k$\\
		\For{$p_i$ $\In$ $P$}{
			\If{$X[p_i]=1$}{$v_a\gets p_i$}
			\If{$Y[p_i]=1$}{$v_b\gets p_i$\\
				\EndLoop}}
		$Q\gets P[v_a, v_b]$, $\Return$ $Q$\\
	\end{algorithm}
	We follow the path $P$, tracking the highest index vertex $v_a$ of $A$ in $P$, until we encounter a vertex $v_b$ of $B$. Then return the path segment $P[v_a, v_b]$.
	It's clear this runs in $\mathcal{O}(n)$ time.
\end{proof}

\begin{lem}[\textsc{RemoveChords}]
	Let $G$ be a connected (directed) graph, and $P = p_1, \dots, p_k$ a path in $G$.
	There is an $\mathcal{O}(n+m)$ algorithm \textsc{RemoveChords} that takes $G$ and $P$ as inputs, and returns a chordless $p_1-p_k$ path $Q$ such that $Q\leq P$.
	\label{lem:remove_chords_linear_time}
\end{lem}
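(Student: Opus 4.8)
The plan is to build $Q$ with a single greedy left-to-right sweep along $P$ that always takes the longest available shortcut. First I would precompute the array $L = \textup{Lookup}(P)$, so that for any vertex $v$ a single table read tells us in $O(1)$ time whether $v$ lies on $P$ and, if so, its position $L[v]$; by the remarks preceding the lemma this initialisation costs $O(n)$. I then grow $Q$ as follows: set the current vertex $q \gets p_1$ and $Q \gets (p_1)$; while $q \neq p_k$, scan $\adj(q)$ and, consulting $L$, select the neighbour $w$ of $q$ that lies on $P$ and maximises $L[w]$; then $\textup{Append}(Q,w)$ and set $q \gets w$. The output is the list $Q$. (For the directed case one simply reads $\adj(q)$ as the out-neighbours of $q$, and the argument below is unchanged.)

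Next I would verify correctness. Because $q = p_i$ always has its $P$-successor $p_{i+1}$ among its neighbours, the chosen $w$ satisfies $L[w] \ge i+1$; hence the $P$-indices of the successive vertices of $Q$ strictly increase while remaining bounded by $k$. This shows the sweep advances at every step, never revisits a vertex, and must land on $p_k$ after at most $k-1$ steps, so $Q$ is a genuine $p_1$--$p_k$ path whose vertices all lie on $P$ in strictly increasing order, giving $Q \le P$. For chordlessness, suppose $q_aq_b \in E(G)$ with $b \ge a+2$. Then $q_b$ is a neighbour of $q_a$ lying on $P$, so the greedy rule forces $L[q_{a+1}] \ge L[q_b]$; but strict monotonicity of the indices gives $L[q_{a+1}] < L[q_{a+2}] \le L[q_b]$, a contradiction. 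Thus $G[V(Q)]$ carries no chord and $Q$ is the desired chordless path. I expect this chordlessness step to be the main obstacle: the whole point is that the \emph{longest-shortcut} choice, combined with the strict monotonicity it enforces, is exactly what rules out chords, whereas a naive "any shortcut" rule would only shorten $P$ without guaranteeing an induced path.

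Finally I would bound the running time. Since the indices strictly increase, each vertex becomes the current vertex $q$ at most once, so each adjacency list is scanned at most once over the entire run, and every neighbour encountered is processed in $O(1)$ via $L$. Summing over the distinct vertices of $Q$, the total scanning work is at most $\sum_{v \in V(G)} \deg_G(v) = 2m = O(m)$, and adding the $O(n)$ cost of constructing $L$ yields the claimed $O(n+m)$ bound. I would then state the procedure formally as an algorithm mirroring \textsc{TruncatePath}, with the greedy inner scan as its only nontrivial ingredient.
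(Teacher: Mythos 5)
Your proposal is correct and is essentially the paper's own algorithm: both precompute $\textup{Lookup}(P)$ and greedily jump from the current vertex to its neighbour lying furthest along $P$, with the same $\mathcal{O}(n)$ initialisation plus $\mathcal{O}(m)$ adjacency-list-scanning bound. The only difference is that you spell out the correctness argument (strict monotonicity of indices and the resulting chordlessness) that the paper leaves implicit, which is a welcome addition rather than a deviation.
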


\begin{proof}
	We outline the \textsc{RemoveChords} algorithm.
	It's input is a (directed) graph $G$, and a path $P = (p_1, \dots, p_k)$.
	The output is a new chordless path $Q$ (see Algorithm \ref{alg:remove_chords}).
	
	\begin{algorithm}[h]
		\caption{\textsc{RemoveChords$(G,P)$}}\label{alg:remove_chords}
		
		\Indm
		\BlankLine
		
		\Indp
		$X\gets \Lookup(P)$, $Q\gets (p_1)$, $i\gets 1$\\
		\While{$i<k$}{
			$j\gets 0$\\
			\For{$v$ \In $\adj(p_i)$}{
				\If{$X[v] > j$}{$j\gets X[v]$\\}}
			$\Append(Q,p_j)$\\
			$i\gets j$\\}
		$\Return$ $Q$\\
	\end{algorithm}
	The algorithm creates a new path $Q = p_1$. 
	It finds the neighbour $p_j$ of $p_1$ that is furthest along the path $P$, and appends $p_j$ to the end of $Q$ --- then keeps repeating this step with $p_j$ until it eventually hits the last vertex of $P$.
	The initialization takes time $\mathcal{O}(n)$.
	The while loop, at worst, scans over all the adjacency lists $\adj(i)$ once, for a total running time of $\mathcal{O}(\sum_{i\in V} |\adj(i)|) = \mathcal{O}(m)$.
\end{proof}

\begin{lem}[\textsc{FaceIntersection}]
	Let $G=(V,E)$ be a 2-connected graph, and $\Omega = (W_1, \dots, W_k)$ a list of all large faces of $G$, where each $W_i$ is a list of the large face's vertices.
	There exists an $\mathcal{O}(n)$ algorthm \textsc{FaceIntersection} with inputs $V$ and $\Omega$, that returns $\texttt{null}$ if no two large faces intersect, and returns some vertex $v$ in the intersection of two large faces otherwise.
	\label{lem:well_intersection_linear_time}
\end{lem}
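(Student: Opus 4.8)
The plan is to reduce the geometric question ``do two large faces intersect?'' to the combinatorial question ``does some vertex appear in two distinct entries of the list $\Omega$?''. This reduction is valid because, in a 2-connected plane graph, each large face is bounded by a cycle, so the intersection of two faces is a set of vertices; two large faces intersect precisely when they share a boundary vertex. Thus \textsc{FaceIntersection} only needs to scan the vertices listed in $\Omega$ and detect whether any single vertex is listed as belonging to two different large faces, returning such a vertex when one exists and \texttt{null} otherwise.

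Concretely, I would first initialise an $n$-entry array $M$ to all zeros (exactly as the $\textup{Subset}$ and $\textup{Lookup}$ arrays are built in $\mathcal{O}(n)$ time), where $M[v]$ is intended to record the index of the first large face found to contain $v$. I would then iterate over the faces $W_1, \dots, W_k$ in order, and within each $W_i$ over its vertices $v$. If $M[v]$ is already nonzero, then $v$ was recorded while processing some earlier face $W_j$; since the boundary of a single face is a cycle with no repeated vertices, we have $j \neq i$, so $v \in W_i \cap W_j$ and I return $v$ immediately. Otherwise I set $M[v] \gets i$ and continue. If the entire scan finishes without encountering an already-marked vertex, I return \texttt{null}.

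Correctness is immediate from the reduction: the procedure returns a vertex exactly when some vertex is marked twice, which happens if and only if two distinct large faces share a vertex; and when such a pair exists, the first shared vertex encountered is returned. The running time beyond the $\mathcal{O}(n)$ initialisation is a single pass over all face--vertex incidences, costing $\mathcal{O}\!\left(\sum_i |W_i|\right)$. The main point to verify — and the one place where planarity is essential — is that this sum is linear: $\sum_i |W_i| \le \sum_{\text{faces } f} |V(f)| = 2m = \mathcal{O}(n)$, using $m \le 3n-6$ for simple planar graphs. Hence the whole algorithm runs in $\mathcal{O}(n)$ time. The chief obstacle is therefore not the logic of the scan but establishing this linear bound on the combined length of all large faces, together with the small subtlety that a repeated mark must originate from two \emph{different} faces, which is guaranteed precisely because each face boundary is a simple cycle.
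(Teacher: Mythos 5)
Your proposal is correct and takes essentially the same approach as the paper: a single $\mathcal{O}(n)$-initialised marker array, one pass over the vertex lists in $\Omega$ returning the first vertex seen twice, with the running time charged to the total face--vertex incidences, bounded by $2m = \mathcal{O}(n)$ by planarity. The only cosmetic difference is that you store the face index rather than a $0/1$ flag (making explicit the observation that a repeat must come from a \emph{different} face because each boundary is a simple cycle), which the paper leaves implicit.
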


\begin{proof}
	We outline the \textsc{FaceIntersection} algorithm (Algorithm \ref{alg:well_intersection}).
	The input is a vertex set $V = [1,\dots, n]$ and a list $\Omega = (W_1,\dots, W_k)$ of large faces represented as vertex lists.
	The output is a vertex $v$ in the intersection of two large faces if such exists, else \texttt{null}.
	The algorithm creates an empty set $X$, and scans over all the large faces in $\Omega$ --- adding each vertex encountered to $X$. 
	If it ever encounters the same vertex $v$ twice, it immediately returns that vertex. 
	
	\begin{algorithm}[h]
		\caption{\textsc{FaceIntersection$(V, \Omega)$}}\label{alg:well_intersection}
		
		\Indm
		\BlankLine
		
		\Indp
		$X\gets \Subset(\emptyset)$, $meet \gets \texttt{null}$\\
		\For{$W_i$ $\In$ $\Omega$}{
			\For{$v$ $\In$ $W_i$}{
				\If{$X[v]=1$}{
					$meet \gets v$\\
					\Return meet, \End\\}
				\Else{$X[v] \gets 1$\\}}}
		\Return $meet$\\
	\end{algorithm}
	The total running time of the for loop in line 4 is $\mathcal{O}(\sum_{W_i\in \Omega} |W_i|)$. 
	In a 2-connected plane graph, the total number of vertex encounters when scanning over all faces of the graph once is at most $2|E|$. 
	So $\mathcal{O}(\sum_{W_i\in \Omega} |W_i|) = \mathcal{O}(m)$, and $\mathcal{O}(m) = \mathcal{O}(n)$ for planar graphs.
\end{proof}

Let $G = (V,E)$ be a graph, $P = p_1, \dots, p_j$ a finite sequence of vertices of $G$ (such as a path), and $\Omega = (W_1,\dots, W_k)$ a list of pairwise disjoint subsets of $V$.
A vertex $p_y$ is a \textbf{middle vertex} if there exist integers $i, x$ and $z$ such that $x<y<z$, and $p_x, p_y, p_z$ are all vertices of $P\cap W_i$.
A \textbf{skipper} of $P$ relative to $\Omega$ is a maximal subsequence of $P$ that does not contain any middle vertices.
In Theorems \ref{thm:crossing_removal_algorithm} and \ref{thm:4_conn_cleavable_iff_near_mpg}, we create skippers $Q_1'$ and $Q_2'$ of the paths $Q_1$ and $Q_2$, relative to the set of large faces. 

\begin{lem}[\textsc{PathSkipper}]
	Let $G = (V,E)$ be a connected graph, $P$ a path in $G$, and $\Omega = (W_1,\dots, W_k)$ a list of pairwise disjoint subsets of $V$.
	There is an $\mathcal{O}(n+m)$ algorithm \textsc{PathSkipper} that takes these inputs, and returns a skipper of $P$ relative to $\Omega$.
	\label{lem:path_skipper_linear_time}
\end{lem}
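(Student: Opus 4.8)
The plan is to reduce the problem to a clean combinatorial description of the middle vertices and then read off the skipper in two linear scans of $P$. The first thing I would establish is that, because the sets $W_1,\dots,W_k$ are pairwise disjoint, each vertex of $P$ lies in at most one $W_i$; hence a vertex $p_y$ can be a middle vertex only with respect to the unique set $W_i$ containing it. Fixing such an $i$ and listing the positions $y_1<\dots<y_r$ at which $P$ meets $W_i$, the definition says that $p_{y_\ell}$ is a middle vertex precisely when $1<\ell<r$ --- that is, $p_y\in W_i$ is a middle vertex if and only if it is neither the first nor the last $W_i$-vertex encountered along $P$. Consequently the non-middle vertices are completely determined, and the unique maximal subsequence of $P$ avoiding middle vertices is exactly $P$ with every middle vertex deleted. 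This is the skipper, and it is unique.

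The algorithm then proceeds as follows. First I would build a membership array $S$ of length $n$ with $S[v]=i$ if $v\in W_i$ and $S[v]=0$ otherwise; scanning $\Omega$ once and writing $S[v]\gets i$ for each $v\in W_i$ (just as one builds the $\textup{Lookup}$ array) costs $\mathcal{O}(n+\sum_i|W_i|)=\mathcal{O}(n)$, since disjointness gives $\sum_i|W_i|\le n$. We may assume each $W_i$ is nonempty, so $k\le n$. Next I would allocate length-$k$ arrays $\mathit{first}$ and $\mathit{last}$, initialised to $0$, and make one forward pass over $P=p_1,\dots,p_j$: at position $y$, if $i=S[p_y]\neq 0$, set $\mathit{last}[i]\gets y$, and set $\mathit{first}[i]\gets y$ only when $\mathit{first}[i]$ is still $0$. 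Finally I would make a second pass, appending $p_y$ to the running output $Q$ exactly when $S[p_y]=0$, or when $y\in\{\mathit{first}[i],\mathit{last}[i]\}$ for $i=S[p_y]$.

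Correctness is immediate from the characterization above: a vertex in no $W_i$ is never a middle vertex and is kept, while a vertex in $W_i$ is kept if and only if it is the first or last $W_i$-occurrence, i.e.\ if and only if it is not a middle vertex. For the running time, each initialisation and each pass is $\mathcal{O}(n)$ (using $j\le n$ and $k\le n$) and every append is $\mathcal{O}(1)$, so the total is $\mathcal{O}(n)\subseteq\mathcal{O}(n+m)$ --- in fact the edges of $G$ are never consulted. The only genuinely delicate point, and the one I would be most careful to argue, is the reduction in the first paragraph: one must verify that ``first or last occurrence'' exactly captures the complement of the middle vertices, and that deleting precisely the middle vertices yields a \emph{maximal} (indeed the unique maximal) middle-vertex-free subsequence, rather than merely some subsequence with no middle vertices. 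Once that is pinned down, everything else is routine bookkeeping over two passes of $P$.
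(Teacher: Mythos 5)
Your argument is internally sound for the lemma \emph{as literally worded}, but it is not the paper's proof, and the difference is substantive. The paper's \textsc{PathSkipper} does not delete middle vertices one by one: it augments $G$ to a graph $H$ by adding, for each $W_i$, a directed arc from the \emph{first} vertex of $P\cap W_i$ to the \emph{last} vertex of $P\cap W_i$, and then runs \textsc{RemoveChords} (Lemma \ref{lem:remove_chords_linear_time}) on $H$. The greedy farthest-neighbour walk therefore jumps from the first occurrence of $W_i$ directly to the last occurrence, discarding \emph{every} vertex of $P$ strictly between them --- including vertices that belong to no $W_j$ at all --- and it also shortcuts across any chords that $P$ has in $G$. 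Your algorithm instead retains such intermediate vertices, because under your (literal) reading of the definition a middle vertex must itself lie in some $W_i$. So the two algorithms return different subsequences of $P$.

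The divergence matters because of how the lemma is used. In \textsc{MinDiscCut} (Theorem \ref{thm:min_disc_cut_linear_time}) the union $S_1\cup S_3$ of the two skippers must be the restriction to $G$ of a chordless cycle of $G^\nabla$; compare the construction of $Q_1'$ in Theorem \ref{thm:crossing_removal_algorithm}, where the entire segment $Q[x,y]$ between the first and last $W$-vertices is excised, not merely the $W$-vertices interior to it. That application forces the intended meaning of ``middle vertex'' to be: $p_y$ lies strictly between two vertices of $P\cap W_i$, with no requirement that $p_y\in W_i$ itself --- the phrase ``$p_x, p_y, p_z$ are \emph{all} vertices of $P\cap W_i$'' in the paper's definition is evidently a slip. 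Under the intended reading, your characterization of the non-middle vertices as ``first or last occurrence within each $W_i$'' is false (a vertex lying in no $W_i$ can still be middle), your two-pass algorithm keeps vertices it must discard, and the resulting $S_1\cup S_3$ would not induce a minimal cut, breaking the final theorem. So the verdict is: as a blind proof of the statement-as-written your argument is correct and even simpler than the paper's (it never consults $E$, giving $\mathcal{O}(n)$ rather than $\mathcal{O}(n+m)$), but the delicate point you yourself flagged --- pinning down exactly which vertices are middle and what maximality means --- is precisely where your reading and the paper's intent part ways, and with the paper's intent your algorithm computes the wrong set.
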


\begin{proof}
	The algorithm \textsc{PathSkipper} (Algorithm \ref{alg:path_skipper}) takes as input a graph $G$, a list $\Omega = (W_1,\dots, W_k)$ of disjoint subsets represented as vertex lists, and a path $P=(p_1, \dots, p_j)$.
	It returns a skipper $Q$ of the path $P$ relative to $\Omega$.
	It first extends $G$ to a new graph $H$ --- by adding a directed arc from the first vertex of $P\cap W_i$ to the last vertex of $P\cap W_i$ for each large face $W_i$.
	Then, we run the \textsc{RemoveChords} algorithm of Lemma \ref{lem:remove_chords_linear_time} on the extended graph $H$.
	
	\begin{algorithm}[h]
		\caption{\textsc{PathSkipper$(G,P,\Omega)$}}\label{alg:path_skipper}
		
		\Indm
		\BlankLine
		
		\Indp
		$X\gets \Lookup(P)$, $H\gets G$\\
		\For{$W_i$ $\In$ $\Omega$}{
			$a\gets \infty$, $b\gets 0$\\
			\For{$v$ $\In$ $W_i$}{
				\If{$0 < X[v] < a$}{
					$a\gets X[v]$\\}
				\If{$X[v] > b$}{
					$b\gets X[v]$\\}}
			\If{$b\neq 0$}{
				$\Append(\adj_H(p_a), p_b)$\\}}
		$Q\gets \RemoveChords(H, P)$, $\Return$ $Q$\\
	\end{algorithm}

	Initialising the algorithm takes time $\mathcal{O}(n+m)$.
	Since the subsets $W_i$ are disjoint, the algorithm visits each vertex $v$ at most once, and $H$ has less than $n+m$ edges.
	Thus the call to \textsc{RemoveChords} runs in time $\mathcal{O}(n+m)$ per Lemma \ref{lem:remove_chords_linear_time}.
\end{proof}

We now outline the linear time algorithm, \textsc{MinDiscCut}, for finding a minimal disconnected cut of a 4-connected planar graph. 

\begin{thm}
	There is an $\mathcal{O}(n)$ algorithm \textsc{MinDiscCut} that finds a minimal disconnected cut of a 4-connected planar graph if one exists --- and returns \texttt{null} if not.
	\label{thm:min_disc_cut_linear_time}
\end{thm}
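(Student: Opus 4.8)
The plan is to assemble \textsc{MinDiscCut} from the linear-time subroutines established above, following the constructive proof of Theorem \ref{thm:4_conn_cleavable_iff_near_mpg}. First I would call \textsc{Embed} to obtain a combinatorial embedding, build a DCEL, and run \textsc{ListLargeFaces} to produce the list $\Omega = (W_1, \dots, W_k)$ of large faces. By Theorem \ref{thm:4_conn_cleavable_iff_near_mpg}, $G$ has a disconnected minimal cut if and only if $k \geq 2$, so if $k \leq 1$ the algorithm returns \texttt{null}. Everything after this point assumes $k \geq 2$ and reconstructs, algorithmically, the disconnected minimal cut exhibited in that proof.

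Next I would split into the two cases of that proof. Running \textsc{FaceIntersection} on $(V, \Omega)$ either reports a vertex $v$ lying in two large faces or returns \texttt{null}. In the former case, Corollary \ref{cor:well_meet_is_min_disc_cut} guarantees that $\n(v)$ is a disconnected minimal cut, which can be read off the adjacency list in $\mathcal{O}(|\n(v)|)$ time and returned. In the latter case no two large faces intersect, so I would pick two large faces $W, X \in \Omega$ and produce four disjoint $W-X$ paths: temporarily insert the auxiliary apices $w^\nabla$ in $f_W$ and $x^\nabla$ in $f_X$ (each adjacent to the whole of its face), call \textsc{MengerPaths} on $w^\nabla, x^\nabla$ --- which returns at least four internally disjoint paths since both apices have degree at least $4$ and $G$ is $4$-connected --- then apply \textsc{TruncatePath} (with $A = V(W)$, $B = V(X)$) and \textsc{RemoveChords} to each, obtaining chordless paths $P_1, \dots, P_4$ meeting $W$ only at $w_i$ and $X$ only at $x_i$. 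Sorting the $w_i$ by their position on $W$ (via \textsc{Lookup} applied to the clockwise vertex list of $W$) orders the paths cyclically, and I take the alternating pair $P_1, P_3$ as the frame.

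The heart of the algorithm is to realize the cycle $D$ of the proof of Theorem \ref{thm:4_conn_cleavable_iff_near_mpg} \emph{without} performing the crossing-removal of Theorem \ref{thm:crossing_removal_algorithm} one face at a time, which could cost $\Theta(n)$ per removal and hence be quadratic overall. Instead I would form the chordless separating cycle $C = (w_1, w^\nabla, w_3) \cup P_1 \cup (x_1, x^\nabla, x_3) \cup P_3$ of $G^\nabla$, set $s = w^\nabla$ and $t = x^\nabla$, and batch all crossing removals along each $s-t$ half into a single call: $P_1' = \textsc{PathSkipper}(G, P_1, \Omega)$ and $P_3' = \textsc{PathSkipper}(G, P_3, \Omega)$. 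The claim is that $D = (s, w_1, P_1', x_1, t) \cup (t, x_3, P_3', w_3, s)$ is exactly the chordless cycle produced by iterated crossing-removal, so that no large face crosses $D$ and, by Theorem \ref{thm:auxillary_min_sep_restricts}, $S = V(D) \cap V(G) = V(P_1') \cup V(P_3')$ is a minimal cut. Since $S \cap V(P_1)$ and $S \cap V(P_3)$ are nonempty and are separated by the witness paths $P_2 \cup P_4$, the cut $S$ is disconnected, and the algorithm returns it.

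I expect the main obstacle to be precisely this last identification: proving that the \textsc{PathSkipper} output faithfully implements the repeated application of Theorem \ref{thm:crossing_removal_algorithm} in a single pass. One must check that the directed shortcut arcs added for each $W_i$, together with the chord-removal over $G$-edges, delete exactly the vertices that the detours $x, w_i^\nabla, y$ would delete, that the resulting path is chordless in $G^\nabla$, and that because no large face splits $s$ and $t$ the two skippers remain on opposite sides of $C$ --- i.e.\ that property (5) of Theorem \ref{thm:crossing_removal_algorithm}, which keeps $P_2$ and $P_4$ separated, survives the batched computation. Once correctness is settled, the running-time bound is routine: \textsc{Embed}, \textsc{ListLargeFaces}, \textsc{FaceIntersection}, \textsc{MengerPaths}, and the constant number of \textsc{TruncatePath}, \textsc{RemoveChords}, and \textsc{PathSkipper} calls are each $\mathcal{O}(n+m)$, and since $m = \mathcal{O}(n)$ for planar graphs the total is $\mathcal{O}(n)$.
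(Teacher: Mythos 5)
Your proposal is correct and follows essentially the same route as the paper's own proof: embed and list large faces, return \texttt{null} if fewer than two, return $\n(v)$ via \textsc{FaceIntersection} if two large faces meet, and otherwise extract four Menger paths between two large faces (via added apex vertices), clean them with \textsc{TruncatePath} and \textsc{RemoveChords}, and return the union of the \textsc{PathSkipper} outputs on an alternating pair, justified by Theorems \ref{thm:auxillary_min_sep_restricts} and \ref{thm:crossing_removal_algorithm}. The only differences are cosmetic --- the paper attaches its auxiliary vertices to just the first four vertices of each face rather than the whole face, and it leaves implicit the cyclic ordering of the path endpoints that you handle by explicit sorting.
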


\begin{proof}
	the algorithm \textsc{MinDiscCut} below takes a 4-connected planar graph $G=(V,E)$, and returns a minimal disconnected cut $S$ of $G$ if one exists, \texttt{null} if not.
	We break the algorithm into two parts.
	
	\begin{algorithm}[h]
		\caption{\textsc{MinDiscCut$(G)$} part 1}\label{alg:min_disc_cut}
		
		\Indm
		\BlankLine
		
		\Indp
		$S\gets \texttt{null}$\\
		\BlankLine
		
		Run \Embed to get a combinatorial embedding of $G$, and \ListLargeFaces to return a list $\Omega = (W_1,\dots, W_k)$ of large faces of the embedding.\\
		\If{$|\Omega| < 2$}{\Return \texttt{null}, \End\\}
		\BlankLine
		
		Run $\FaceIntersection(V,\Omega)$, which returns $meet$.\\
		\If{$meet \neq \texttt{null}$}{
			$S\gets \adj(meet)$\\
			\Return $S$, \End\\}
		\BlankLine
	\end{algorithm}
	It first finds a combinatorial embedding of $G$, and then uses this embedding to create a list $\Omega = (W_1,W_2,\dots)$ of all the large faces of $G$.
	If $G$ has less than two large faces, it returns \texttt{null}, indicating that $G$ is cleavable, per Theorem \ref{thm:4_conn_cleavable_iff_near_mpg}.
	\textsc{MinDiscCut} then checks the pairwise intersections of the large faces. 
	If there exists a vertex $v$ in two large faces, the algorithm returns $\n(v)$, which is a disconnected minimal cut per Corollary \ref{cor:well_meet_is_min_disc_cut}.
	We let $x_1,\dots, x_4$ denote the first four vertices of $W_1$, and $y_1,\dots, y_4$ the first four vertices of $W_2$.
	
	\begin{algorithm}[h]
		\caption{\textsc{MinDiscCut$(G)$} part 2}
		\setcounter{AlgoLine}{9}
		\Indm
		\Indp
		Create a new graph $G'$ by adding vertices $s$ and $t$ to $G$ --- where $s$ is adjacent to $x_1,\dots, x_4$ and $t$ is adjacent to $y_1, \dots, y_4$.\\
		
		Run $\MengerPaths(G',s,t)$, returning four internally disjoint $s-t$ paths $P_1', \dots, P_4'$.\\
		
		Remove $s$ and $t$ from each $P_i$ to get four $W_1-W_2$ paths $P_1,\dots, P_4$ of $G$.\\
		
		On each $P_i$, run $\TruncatePath(P_i, W_1, W_2)$, and then \RemoveChords, returning chordless paths $R_i$ that are internally disjoint from $W_1\cup W_2$.\\
		
		
		Run $\PathSkipper(G, R_i, \Omega)$ for $R_1$, $R_3$ to obtain skippers $S_1$, $S_3$.\\
		
		$S\gets S_1\cup S_3$, $\Return$ $S$\\
	\end{algorithm}
	In the second part, \textsc{MinDiscCut} finds four internally disjoint, chordless paths $R_1,\dots, R_4$ that start in $W_1$, end in $W_2$, and are internally disjoint from both.
	Finally, the algorithm finds skippers for $R_1$ and $R_3$ relative to $\Omega$. 
	The union of these two skippers is the restriction of a chordless cycle of $G^\nabla$ --- and is thus the desired minimal disconnected cut per Theorem \ref{thm:auxillary_min_sep_restricts} (also see the proof of Theorem \ref{thm:crossing_removal_algorithm}).
	
	In a planar graph, $m \leq 3n-6$, so all the subroutine calls (\textsc{Embed}, \textsc{RemoveChords}, etc.) run in $\mathcal{O}(n)$ time by the prior results of this section.
	Further, its clear that all the additional steps (such as the constructions in lines 10 and 12) can be done in linear time.
\end{proof}

\section{Conclusion and further questions}

We have shown that the \MDC{} problem is easily solved for 4-connected planar graphs: the graph has a minimal disconnected cut if and only if it has at least two large faces. 
This characterization, combined with other strong properties of 4-connected planar graphs, allows us to solve the \MDC{} problem --- and produce a witnessing cut --- in linear time.

This investigation into problem raises a handful of further questions:
\begin{itemize}
	\item Can the $\mathcal{O}(n^3)$ time for solving \MDC{} on 3-connected planar graphs in \cite{min_disc_cuts_in_pgs} can be improved?
	
	\item Is there some minimum connectivity $\kappa(g)$ such that the \MDC{} problem can be easily solved for graphs of (orientable) genus $g$ that are $\kappa(g)$-connected? When $g=0$, we have shown that $\kappa = 4$ is sufficient.
	
	\item Planar graphs that are 4-connected do not have a particularly simple structure. It therefore seems possible that there are many graph classes of interest for which it is easy to characterise the cleavable graphs, and / or solve the \MDC{} and \DC{} problems in polynomial time. For what other graph classes can this be done?
\end{itemize}
 
\section{Acknowledgments}
Many thanks to Guy Paterson-Jones for valuable discussion --- and to David Erwin for general advice about the paper.

\end{document}